\documentclass[preprint,3p]{elsarticle}
\usepackage{graphicx} 
\usepackage{amsmath,amsfonts,amssymb,bm, amsthm, mathtools}
\usepackage{xcolor}
\usepackage{tikz}
\usepackage{stanli}
\usepackage{subcaption}
\usepackage[ruled,vlined]{algorithm2e}
\usetikzlibrary{decorations,decorations.pathreplacing,patterns,calc}
\usepackage{pgfplots}
\pgfplotsset{compat=1.18}
\pgfplotsset{warning/approx empty range enlarged/.code args={#1#2#3}{}}
\pgfplotsset{colormap={mopedcolor}{%
rgb=(1.000000,1.000000,1.000000)
rgb=(1.000000,0.968627,1.000000)
rgb=(0.988235,0.909804,1.000000)
rgb=(0.968627,0.850980,1.000000)
rgb=(0.945098,0.800000,1.000000)
rgb=(0.905882,0.741176,1.000000)
rgb=(0.866667,0.690196,1.000000)
rgb=(0.819608,0.650980,1.000000)
rgb=(0.760784,0.600000,1.000000)
rgb=(0.698039,0.560784,1.000000)
rgb=(0.631373,0.521569,1.000000)
rgb=(0.568627,0.478431,1.000000)
rgb=(0.486275,0.439216,1.000000)
rgb=(0.411765,0.411765,1.000000)
rgb=(0.380392,0.431373,1.000000)
rgb=(0.349020,0.458824,1.000000)
rgb=(0.321569,0.478431,1.000000)
rgb=(0.290196,0.513725,1.000000)
rgb=(0.270588,0.560784,1.000000)
rgb=(0.250980,0.611765,1.000000)
rgb=(0.219608,0.662745,1.000000)
rgb=(0.200000,0.705882,1.000000)
rgb=(0.188235,0.768627,1.000000)
rgb=(0.168627,0.835294,1.000000)
rgb=(0.149020,0.901961,1.000000)
rgb=(0.141176,0.972549,1.000000)
rgb=(0.121569,1.000000,0.972549)
rgb=(0.109804,1.000000,0.898039)
rgb=(0.101961,1.000000,0.819608)
rgb=(0.090196,1.000000,0.741176)
rgb=(0.078431,1.000000,0.662745)
rgb=(0.070588,1.000000,0.596078)
rgb=(0.058824,1.000000,0.513725)
rgb=(0.050980,1.000000,0.431373)
rgb=(0.050980,1.000000,0.349020)
rgb=(0.039216,1.000000,0.262745)
rgb=(0.031373,1.000000,0.192157)
rgb=(0.031373,1.000000,0.109804)
rgb=(0.031373,1.000000,0.031373)
rgb=(0.101961,1.000000,0.019608)
rgb=(0.184314,1.000000,0.019608)
rgb=(0.247059,1.000000,0.019608)
rgb=(0.325490,1.000000,0.011765)
rgb=(0.407843,1.000000,0.011765)
rgb=(0.490196,1.000000,0.011765)
rgb=(0.572549,1.000000,0.011765)
rgb=(0.635294,1.000000,0.011765)
rgb=(0.717647,1.000000,0.000000)
rgb=(0.800000,1.000000,0.000000)
rgb=(0.882353,1.000000,0.000000)
rgb=(0.964706,1.000000,0.000000)
rgb=(1.000000,0.968627,0.000000)
rgb=(1.000000,0.882353,0.000000)
rgb=(1.000000,0.800000,0.000000)
rgb=(1.000000,0.717647,0.000000)
rgb=(1.000000,0.635294,0.000000)
rgb=(1.000000,0.568627,0.000000)
rgb=(1.000000,0.482353,0.000000)
rgb=(1.000000,0.400000,0.000000)
rgb=(1.000000,0.317647,0.000000)
rgb=(1.000000,0.235294,0.000000)
rgb=(1.000000,0.164706,0.000000)
rgb=(1.000000,0.082353,0.000000)
rgb=(1.000000,0.000000,0.000000)
}}%

\usepackage[colorlinks,citecolor=blue]{hyperref}

\newcommand{\tens}[1]{\bm{\mathit{#1}}}             
\newcommand{\tenss}[1]{\bm{\mathbf{#1}}}            
\newcommand{\tensf}[1]{\bm{\mathbf{#1}}}            

\newcommand{\trn}{\mathsf{T}}                       
\newcommand{\strain}{\tenss{\varepsilon}}           
\newcommand{\stress}{\tenss{\sigma}}                
\newcommand{\abstrstrain}{\lvert\operatorname{Tr}(\strain)\rvert}
\newcommand{\Ei}{\tensf{E}_i}                       
\newcommand{\EV}{\tensf{E}_i^{\mathrm{V}}(v_i^+)}   
\newcommand{\EVs}{\tensf{E}_i^{\mathrm{V}}(v_i^{+\star})}   
\newcommand{\Emin}{\tensf{E}^{-}}                   
\newcommand{\Emax}{\tensf{E}^{+}}                   
\newcommand{\Estar}{\tensf{E}^{*}}
\newcommand{\nel}{n_\mathrm{e}}                     
\newcommand{\nlc}{n_\mathrm{lc}}                    
  
\newcommand{\semtrx}[1]{\ifcat\noexpand#1\relax\bm{#1}\else\mathsf{#1}\fi}
\newcommand{\sevek}[1]{\ifcat\noexpand#1\relax\bm{#1}\else\mathsf{#1}\fi} 

\newcommand{\EiM}{\semtrx{E}_i^\mathrm{M}}          
\newcommand{\EiMb}{\semtrx{E}_i^{\mathrm{M,b}}}  

\newcommand{\Conv}{\operatorname{Conv}}
\newcommand{\Tr}{\operatorname{Tr}}
\newcommand{\Det}{\operatorname{Det}}
\DeclareMathOperator*{\argmin}{arg\,min}

\newtheorem{proposition}{Proposition}[section]
\newtheorem{theorem}{Theorem}[section]
\newtheorem{lemma}{Lemma}[section]
\newtheorem{remark}{Remark}[section]
\newtheorem{corollary}{Corollary}[section]
\newtheorem{definition}{Definition}[section]

\newif\ifdebuglayout
\debuglayouttrue 

\title{Hierarchy of bounds in free orthotropic material optimization: From convex relaxations to Hashin--Shtrikman via sequential global programming}
\author[inst1]{Marek Tyburec\corref{cor1}}
\author[inst2]{Michael Stingl}
\author[inst3]{Shenyuan Ma}
\address[inst1]{Experimental Center, Faculty of Civil Engineering, Czech Technical University in Prague, Czech Republic}
\address[inst2]{Department of Mathematics, Friedrich-Alexander-University, Erlangen-Nürnberg, Germany}
\address[inst3]{Department of Mechanics, Faculty of Civil Engineering, Czech Technical University in Prague, Czech Republic}
\cortext[cor1]{Corresponding author}
\ead{marek.tyburec@cvut.cz}
\date{} 

\begin{document}
\begin{frontmatter}

\begin{abstract}
We study free orthotropic material optimization for two-dimensional plane-stress compliance minimization with two well-ordered isotropic phases, motivated by the gap between tensors admissible in classical free-material optimization and tensors realizable by composites. To reduce this gap, we construct a hierarchy of realizability-aware admissible sets induced by zeroth-order, Voigt, and Hashin--Shtrikman (HS) energy bounds, moving from convex relaxations to a tighter nonconvex model. In the convex zeroth-order and Voigt settings, the Voigt set is strictly tighter for intermediate volume fractions and coincides with the zeroth-order set at pure-phase endpoints, and the Voigt model reduces to an isotropic variable-thickness-sheet formulation. In the single-loadcase continuum zeroth-order problem, at least one optimal solution can be chosen orthotropic. For HS constraints, we rewrite the bound as a Voigt term minus a nonnegative correction, clarifying strict tightening for interior volume fractions and local nonconvexity. We further prove that the convex hull of the HS feasible set equals the Voigt set and derive reduced formulations via active-constraint analysis and explicit elementwise volume characterization, including reductions specialized to orthotropic effective tensors. In the single-loadcase continuum setting, the HS relaxation is tight with the Allaire--Kohn relaxed problem, attained in the relaxation sense by sequential laminates, whereas in generic multi-loadcase settings it provides a lower bound on optimal compliance over general microstructures. The resulting nonconvex orthotropic HS problem is solved by sequential global programming, and numerical results confirm the predicted compliance hierarchy and show close agreement with finite-rank laminate references.
\end{abstract}

\begin{keyword}
free material optimization \sep orthotropic materials \sep Hashin--Shtrikman bounds \sep realizability-aware design \sep sequential global programming \sep sequential laminates
\end{keyword}

\end{frontmatter}

\section{Introduction}

Free material optimization (FMO) is a class of topology-optimization methods in which the elasticity tensor field is directly optimized. By treating the local constitutive response as the design variable, FMO offers significantly greater design freedom than density-based topology optimization with prescribed material interpolation laws \citep{Ringertz1993,Bendsoe1994,Bendsoe1995,Zowe1997}.

Typically, FMO formulations impose positive semidefiniteness of elasticity tensors and invariant-based material-budget constraints. These constraints control stiffness magnitude, but they do not directly characterize phase-wise realizability as effective properties of composites with prescribed constituent phases \citep{Wu2021,Tyburec2022}. A complementary asymptotic result is due to \citet{Milton1995}: in the extreme high-contrast limit, arbitrary positive-definite effective tensors can be realized by hierarchical constructions. However, this asymptotic realizability result does not by itself provide computable finite-contrast admissible sets for fixed constituent phases.

This paper addresses the fixed-phase inverse question relevant for material-constrained design: for given well-ordered isotropic phases, which effective tensors are admissible in the model? We pursue this question through computable admissible sets induced by energy bounds and organize them into a hierarchy: zeroth-order, Voigt, and Hashin--Shtrikman bounds. This hierarchy provides progressively tighter, realizability-informed relaxations, but it does not provide a complete characterization of realizability in general. We focus on orthotropy because, in the two-dimensional anisotropic setting, bound-attaining laminate constructions are orthotropic \citep{Allaire1993wo,Allaire1993}, making orthotropy both physically motivated and analytically tractable.

The remainder of this introduction is organized as follows: Section~\ref{sec:intro_fmo} reviews FMO, Section~\ref{sec:intro_bounds} recalls energy-based bounds, Section~\ref{sec:intro_homogenization} summarizes homogenization-based optimization, and Section~\ref{sec:intro_aims} states the aims and contributions.

\subsection{Free material optimization}\label{sec:intro_fmo}
Early FMO formulations combined compliance minimization with invariant-based resource constraints on positive-semidefinite constitutive tensors, typically using trace- or Frobenius-type measures. This includes the constitutive-matrix model of \citet{Ringertz1993}. Building on that setting, \citet{Bendsoe1994} derived analytical local optimal-material laws and a reduced single-loadcase formulation, while \citet{Bendsoe1995} extended the framework to multiple independent loadcases via weighted mean compliance. These formulations were later cast in mathematical-programming form for multiple loadcases and contact settings by \citet{Zowe1997}. This line culminated in semidefinite-programming formulations of worst-case compliance minimization under multiple loadcases and contact constraints, together with accompanying existence results \citep{BenTal2000}. Overview articles \citep{Kocvara2002,Kocvara2008} document this shift from early constitutive formulations to linear and nonlinear semidefinite programming approaches.

The framework was subsequently extended in several practically relevant directions. Formulations with local stress constraints, together with tailored scalable numerical solution methods, were developed in \citep{Kocvara2007,Kocvara2012}. Vibration requirements were addressed by imposing constraints on the fundamental eigenfrequency \citep{Stingl2009}. On the theoretical side, \citet{Haslinger2010} established existence and convergence results for such multidisciplinary extensions using $H$-convergence. In parallel, stress-based trace-constrained free material formulations were developed for single and multiple loadcases \citep{Czarnecki2012,Czarnecki2014}. These developments were further unified across anisotropic, isotropic, and fixed-Poisson-ratio settings \citep{Lewinski2021} and generalized within admissible-cone frameworks with existence analysis \citep{Bobotowski2022}. A recent isotropic specialization reformulates the trace-constrained problem as second-order cone programming \citep{Yang2019}.

Despite this breadth, most FMO formulations share a common structural feature: admissibility is primarily encoded through positive-semidefiniteness and trace- or Frobenius-norm-based material-resource constraints. These conditions control stiffness levels but do not directly characterize phase-wise realizability for fixed constituent phases.

\subsection{Energy-based bounds}\label{sec:intro_bounds}
To address the realizability gap left by FMO formulations that control material usage through invariant-based measures, one can use admissible sets induced by energy bounds for two-phase composites. Classical interval estimates of Voigt and Reuss type were discussed for reinforced solids by \citet{Hill1963}, and substantially tightened by the Hashin--Shtrikman variational framework \citep{Hashin1963}. For two isotropic phases with prescribed volume fractions, \citet{Avellaneda1987} established optimal bounds for imposed uniform strain or stress ensembles and proved attainability by finite-rank laminates, making the link between optimal bounds and realizable microgeometries explicit.

This framework was then sharpened in directions directly relevant here. \citet{Allaire1993wo} treated mixtures of two possibly anisotropic well-ordered phases and derived optimal upper and lower bounds for elastic and complementary energies (and related sums of energies), with parallel Hashin--Shtrikman and translation-method interpretations. In the two-dimensional isotropic two-phase case, \citet{Allaire1993} gave explicit optimal formulas and attainability results that also cover non-well-ordered regimes. In the orthotropic setting, \citet{Lipton1994} derived optimal correlated bounds for the six independent orthotropic effective moduli of two well-ordered isotropic phases and identified finite-rank orthotropic laminates that attain these bounds.

Later work focused on turning bound theory into optimization-ready ingredients. \citet{Bendsoe1999} analyzed interpolation schemes through the lens of energy bounds and realizability constraints, clarifying the modeling role of Voigt-type and Reuss-Voigt-type descriptions versus tighter Hashin--Shtrikman-consistent descriptions. \citet{Burazin2021} derived explicit lower Hashin--Shtrikman complementary-energy formulas in 2D well-ordered settings, and later provided explicit 3D Hashin--Shtrikman upper primal and lower complementary bounds together with saturating laminate constructions in \citep{Burazin2024}. At the opposite end, \citet{Lobos2016} revisited constituent-only zeroth-order bounds, provided lower-bound constructions, and derived explicit bounds for individual tensor components. A broad synthesis of variational bounds, translation methods, attainability, and $G$-closure viewpoints is provided by \citet{Milton2022}.

\subsection{Homogenization-based optimization}\label{sec:intro_homogenization}
Building on these energy bounds and microstructures attaining them, the homogenization literature developed topology optimization formulations in which microstructured composites appear as relaxed design variables. The seminal contribution of \citet{Bendose1988} introduced a material-distribution approach on a fixed reference domain, where design variables parameterized effective anisotropic properties of periodic rectangular-hole cells, thereby avoiding repeated remeshing while permitting topology changes. In a related relaxed setting, \citet{Allaire1993od} considered a plane-stress formulation with a weighted objective combining compliance and mass through extremal microstructures and used an alternating-minimization stress-based framework. In a complementary setting, \citet{Jog1994} restricted the admissible composites to rank-2 laminates, analytically eliminated microstructural variables through a sequence of reduced optimization problems, and solved the resulting mixed finite-element formulation with coupled macroscopic-microscopic updates.

The same period also established inverse and multi-loadcase perspectives. \citet{Sigmund1994} formulated inverse homogenization, namely the optimization of a periodic base cell to match prescribed effective constitutive properties. \citet{Allaire1996} extended homogenization-based optimization to multiple loadcases, emphasizing that explicit optimal composite parameters available in single-loadcase settings are not available in the multi-loadcase regime. This direction was strengthened in \citep{allaire1997shape}, which proved a general relaxation result in two and three dimensions and introduced a penalized topology-optimization algorithm to recover classical phase-separated designs. \citet{Allaire2002} consolidated these developments into a unified framework linking nonexistence of classical minimizers, relaxation by homogenization, and practical numerical algorithms. In the same framework of alternating minimization and sequential laminates, \citet{Burazin2021} derived a 2D optimality-criteria method for compliance minimization using explicit lower Hashin--Shtrikman complementary-energy bounds for laminates made of two well-ordered isotropic phases, while \citet{Burazin2023} extended this strategy to 3D settings and combined it with penalization to recover classical designs.

Finally, dehomogenization and projection methods have addressed the reconstruction of manufacturable designs from relaxed optima. \citet{Pantz2008} proposed a post-treatment that projects not only density but also lamination information, producing single-scale geometries with controlled detail. \citet{Groen2017} simplified and scaled this idea to high-resolution manufacturable lattices with performance close to homogenization-based predictions.  \citet{Allaire2019} improved optimization-projection consistency for modulated periodic microstructures, including conformal orientation treatment through a harmonic angle field.

\subsection{Aims and contributions}\label{sec:intro_aims}
The aim of this paper is to develop a realizability-aware free material optimization framework for two well-ordered isotropic phases in plane stress, and to clarify what is gained when moving from zeroth-order constraints to Voigt and then to Hashin--Shtrikman (HS) admissible sets. We make this progression explicit through a pointwise inclusion hierarchy and use it as the organizing principle of both the analysis and the algorithms.

To achieve this aim, the main contributions of this work are:
\begin{enumerate}
    \item We formulate a realizability-aware free material optimization framework for two-dimensional plane stress, organized as a hierarchy of admissible sets induced by zeroth-order, Voigt, and HS energy bounds. We establish well-posedness by proving existence of minimizers for the zeroth-order and Voigt models and existence in the $H$-relaxed sense for the HS model.
    \item For the convex zeroth-order and Voigt models, we prove strict inclusion of admissible sets for intermediate volume fractions and coincidence at pure-phase endpoints. We also prove that the Voigt model reduces to an isotropic variable-thickness-sheet formulation, and that in the single-loadcase continuum zeroth-order setting an optimal solution can be chosen orthotropic.
    \item For HS constraints, we derive an explicit Voigt-minus-correction representation of the energy bound and prove that the convex hull of the HS feasible set equals the Voigt admissible set. We further derive reduced HS formulations via active-constraint analysis and explicit elementwise volume characterization, with dedicated reductions for orthotropic effective tensors.
    \item We prove that in the two-dimensional single-loadcase continuum setting with well-ordered isotropic phases, the HS relaxation is tight with respect to the Allaire--Kohn relaxed problem. For generic multi-loadcase configurations, we clarify that the model provides a lower bound on compliance minimization over general microstructures.
    \item We develop a sequential global programming method for the resulting nonconvex orthotropic HS problem and validate the hierarchy numerically across phase contrasts and mesh resolutions, confirming the predicted compliance ordering and close agreement with finite-rank laminate references.
\end{enumerate}

The paper is organized as follows. Section~\ref{sec:notation} introduces notation, and Section~\ref{sec:cont_fmo} presents the continuum formulation of the problem together with existence results. Sections~\ref{sec:fmo_zero}--\ref{sec:fmo_hashin} then develop the zeroth-order, Voigt, and Hashin--Shtrikman model classes, with inclusion relations, geometric properties, and reduced formulations. Section~\ref{sec:fmo_ortho} introduces the orthotropic Hashin--Shtrikman formulation. Section~\ref{sec:sgp} presents the sequential global programming strategy for the resulting nonconvex optimization problem, while Section~\ref{sec:allaire_alg} describes the alternating-minimization laminate-based method used for single-loadcase comparison. Finally, Section~\ref{sec:results} reports numerical experiments, and Section~\ref{sec:summary} concludes.

\section{Notation}\label{sec:notation}
We focus on the planar setting, $d=2$, and adopt the following conventions for the notation used throughout the paper. Vectors and matrices are written in boldface sans-serif font. Vectors are written in lowercase, e.g., $\sevek{u}\in\mathbb{R}^n$, and matrices in uppercase, e.g., $\semtrx{A}\in\mathbb{R}^{m\times n}$. Tensors are written in boldface serif font. First-order tensors are written in lowercase italics, e.g., $\tens{x}$. For second-order tensors we use lowercase upright, e.g., $\tenss{\varepsilon}\in\mathbb{R}^{d\times d}$, while fourth-order tensors are written in uppercase upright, e.g., $\tensf{E}\in\mathbb{R}^{d\times d\times d\times d}$.

For vectors and tensors of compatible sizes, $\langle\cdot,\cdot\rangle$ denotes the Frobenius inner product. We write $\lvert\cdot\rvert$ for the associated norm and use $\lVert\cdot\rVert_F$ when emphasizing the Frobenius norm. For $\tenss{a}\in\mathbb{R}^{d\times d}$ we write $\Tr(\tenss{a})$ for its trace and $\Det(\tenss{a})$ for its determinant. Moreover, for $\tenss{a},\tenss{b}\in\mathbb{R}^{d\times d}$ we denote by $\tenss{a}\otimes \tenss{b}$ the tensor product defined by $(\tenss{a}\otimes \tenss{b})_{ijkl}:=a_{ij}b_{kl}$. For a fourth-order tensor $\tensf{E}$ acting on a second-order tensor $\tenss{a}$, we write $\tensf{E}:\tenss{a}$, and also use the shorthand $\tensf{E}\tenss{a}$. In particular, $\langle\tensf{E}\strain,\strain\rangle:=\langle\tensf{E}:\strain,\strain\rangle$.

By $\mathbb{S}^{n\times n}$ we denote the space of real symmetric $n\times n$ matrices and by $\mathbb{S}^{n\times n}_{\succeq 0}$ the cone of symmetric positive semidefinite matrices. More generally, let $V$ be a finite-dimensional real inner product space with inner product $\langle\cdot,\cdot\rangle$. For self-adjoint linear maps $A,B:V\to V$ we write $A\succeq 0$ if $\langle Au,u\rangle\ge 0$ for all $u\in V$ and define the (Loewner) order $A\preceq B$ by $B-A\succeq 0$ (strict inequalities $\prec,\succ$ analogously). This convention applies, in particular, to symmetric matrices ($V=\mathbb{R}^n$) and to fourth-order elasticity tensors acting as self-adjoint maps on $V=\mathbb{S}^{d\times d}$. In this sense, $\mathbb{S}^{d\times d\times d\times d}_{\succ 0}$ denotes the subset of $\mathbb{S}^{d\times d\times d\times d}$ that is positive definite as a self-adjoint linear map on $\mathbb{S}^{d\times d}$.

Let $\tenss{I}_{d\times d}$ denote the identity matrix in $\mathbb{R}^{d\times d}$. Moreover, let $\tensf{I}_{d\times d\times d\times d}$ denote the fourth-order identity on $\mathbb{S}^{d\times d}$, i.e., $(\tensf{I}_{d\times d\times d\times d}:\tenss{\varepsilon})=\tenss{\varepsilon}$ for all $\tenss{\varepsilon}\in\mathbb{S}^{d\times d}$. For a fourth-order tensor $\tensf{E}\in\mathbb{S}^{d\times d\times d\times d}$ acting as a self-adjoint linear map on $\mathbb{S}^{d\times d}$, we define its trace by $\Tr(\tensf{E}) := \tensf{E}:\tensf{I}_{d\times d\times d\times d}$. Equivalently, $\Tr(\tensf{E})$ coincides with the trace of the Kelvin--Mandel matrix representation of $\tensf{E}$ on $\mathbb{S}^{d\times d}$.

For an isotropic fourth-order tensor $\tensf{E}$ with shear modulus $\mu$ and bulk modulus $\kappa$, the constitutive law $\stress=\tensf{E}:\strain$ takes the volumetric--deviatoric form
\begin{equation}
    \forall\strain\in\mathbb{S}^{d\times d}, \qquad
    \tensf{E}:\strain=\kappa \Tr(\strain)\tenss{I}_{d\times d}+2\mu \left(\strain-\frac{1}{d}\Tr(\strain)\tenss{I}_{d\times d}\right).
\end{equation}
The inverse tensor is then
\begin{equation}
    \forall\stress\in\mathbb{S}^{d\times d},\qquad {\tensf{E}}^{-1}:\stress=\frac{1}{\kappa d} \Tr(\stress)\tenss{I}_{d\times d}+\frac{1}{2\mu} \left(\stress-\frac{1}{d}\Tr(\stress)\tenss{I}_{d\times d}\right).
\end{equation}
Given two isotropic tensors $\Emin,\Emax$, we say they are well ordered if $\Emin\prec \Emax$, equivalently $\mu^-<\mu^+$ and $\kappa^-<\kappa^+$.

\section{Methods}

This section formulates the free material optimization problem and the corresponding energy bounds. We work in plane stress ($d=2$) with two isotropic constituent tensors $\Emin,\Emax \in \mathbb{S}^{d\times d\times d\times d}_{\succ 0}$ satisfying $\Emin \prec \Emax$. For a local stiff-phase volume fraction $v^+\in[0,1]$, mixtures of these phases generate effective tensors, and our goal is to characterize as tightly as possible the admissible set of orthotropic effective tensors through energy bounds. The design variables are the effective tensor field $\tensf{E}(\tens{x})$ and the local volume fraction $v^+(\tens{x})$, coupled by pointwise bound constraints.

We start from a continuum multi-loadcase compliance-minimization problem and a unified strain-energy representation of admissibility of the energy bounds (Section~\ref{sec:cont_fmo}). We then derive discrete models with increasing tightness: zeroth-order bounds (Section~\ref{sec:fmo_zero}), Voigt upper bounds (Section~\ref{sec:fmo_voigt}), and Hashin--Shtrikman-type upper bounds (Section~\ref{sec:fmo_hashin}). Next, we specialize to orthotropic effective tensors (Section~\ref{sec:fmo_ortho}) and solve the resulting nonconvex problems by sequential global programming (Section~\ref{sec:sgp}). Section~\ref{sec:allaire_alg} recalls the alternating-minimization laminate method of \citet{Burazin2021}, used as a comparison reference.

\subsection{Continuum formulation and solution existence}\label{sec:cont_fmo}

Let $\Omega\subset\mathbb{R}^d$ be a bounded Lipschitz domain. Its boundary is decomposed as $\partial\Omega=\overline{\Gamma_\mathrm{D}}\cup\overline{\Gamma_\mathrm{N}}$ with $\Gamma_\mathrm{D}\cap\Gamma_\mathrm{N}=\emptyset$ and $\lvert\Gamma_\mathrm{D}\rvert>0$. We prescribe homogeneous Dirichlet conditions on $\Gamma_\mathrm{D}$ and set
\begin{equation}\label{eq:U0}
\mathcal{U}_0
:=\left\{\tens{v}\in H^1(\Omega;\mathbb{R}^d)\;:\;\tens{v}=\tens{0}\ \text{on }\Gamma_\mathrm{D}\right\}.
\end{equation}
For $\tens{u}\in\mathcal{U}_0$ we set the linearized strain
\begin{equation}\label{eq:strain_def}
\strain(\tens{u}) :=\tfrac12\left(\nabla\tens{u}+\nabla\tens{u}^{\trn}\right)\in \mathbb{S}^{d\times d}.
\end{equation}
Let $\nlc\in\mathbb{N}$ denote the number of load cases.
For each $j=1,\dots,\nlc$, we prescribe a traction $\tens{f}_j\in L^2(\Gamma_\mathrm{N};\mathbb{R}^d)$ and define
\begin{equation}\label{eq:load_functional}
\ell_j(\tens{v}) := \int_{\Gamma_\mathrm{N}}\langle \tens{f}_j,\tens{v}\rangle\,\mathrm{d}\Gamma, \qquad \forall \tens{v}\in\mathcal{U}_0.
\end{equation}
Given a design $\tensf{E}\in L^\infty(\Omega;\mathbb{S}^{d\times d\times d\times d}_{\succ0})$, we introduce the bilinear form
\begin{equation}\label{eq:bilinear_form}
a_{\tensf{E}}(\tens{u},\tens{v})
:=\int_{\Omega}\left\langle \tensf{E}(\tens{x})\,\strain(\tens{u}(\tens{x})),\strain(\tens{v}(\tens{x}))\right\rangle\,\mathrm{d}\tens{x},
\end{equation}
where $\tensf{E}(\tens{x})$ has the usual minor and major symmetries and acts as a self-adjoint linear map on
$\mathbb{S}^{d\times d}$. For each loadcase $j$, we define $\tens{u}_j=\tens{u}_j(\tensf{E})\in\mathcal{U}_0$ as the solution of the weak equilibrium problem
\begin{equation}\label{eq:state_cont}
a_{\tensf{E}}(\tens{u}_j,\tens{v})=\ell_j(\tens{v}),\qquad \forall \tens{v}\in\mathcal{U}_0.
\end{equation}
The compliance associated with the $j$-th loadcase is defined as the work done by the external loads,
\begin{equation}\label{eq:compliance_single}
c_j(\tensf{E}) := \ell_j(\tens{u}_j(\tensf{E})) = a_{\tensf{E}}(\tens{u}_j(\tensf{E}),\tens{u}_j(\tensf{E})),
\end{equation}
and the objective functional is the sum of the individual compliance terms
\begin{equation}\label{eq:multi_compliance}
J(\tensf{E}) := \sum_{j=1}^{\nlc} c_j(\tensf{E}).
\end{equation}

To encode energy bounds on the effective tensors, we introduce a local volume fraction $v^+:\Omega\to[0,1]$ of the stiffer isotropic phase $\Emax$ and restrict the effective elasticity tensors through pointwise strain-energy inequalities. Specifically, for a.e.\ $\tens{x}\in\Omega$, we require
\begin{equation}\label{eq:strain_bounds_general}
\Phi_{\mathrm{lb}}\big(\strain; v^+(\tens{x})\big)
\le
\left\langle \tensf{E}(\tens{x})\,\strain,\strain\right\rangle
\le
\Phi_{\mathrm{ub}}\big(\strain; v^+(\tens{x})\big), \qquad \forall \strain\in \mathbb{S}^{d\times d}.
\end{equation}
We assume the chosen energy bounds satisfy the sandwich condition
\begin{equation}\label{eq:Phi_sandwich}
\langle \Emin\,\strain,\strain\rangle
\;\le\;
\Phi_{\mathrm{lb}}(\strain;v)
\;\le\;
\Phi_{\mathrm{ub}}(\strain;v)
\;\le\;
\langle \Emax\,\strain,\strain\rangle, \qquad
\forall\,\strain\in\mathbb{S}^{d\times d},\ \forall\,v\in[0,1],
\end{equation}
with fixed $0 \prec \Emin\prec \Emax$. In particular, there exist constants $0<\alpha\le\beta<\infty$ such that every admissible design satisfies 
\begin{equation}\label{eq:uniform_ellipticity_E}
\alpha\,\lvert\strain\rvert^2 \le \left\langle \tensf{E}(\tens{x})\,\strain,\strain\right\rangle \le \beta\,\lvert\strain\rvert^2,
\qquad \forall \strain\in\mathbb{S}^{d\times d},\ \text{for a.e. }\tens{x}\in\Omega.
\end{equation}
For instance, one may take $\alpha:=\lambda_{\min}(\Emin)$ and $\beta:=\lambda_{\max}(\Emax)$
with respect to the Frobenius inner product on $\mathbb{S}^{d\times d}$.

Different choices of $\big(\Phi_{\mathrm{lb}},\Phi_{\mathrm{ub}}\big)$ correspond to increasingly tight descriptions of realizable effective tensors. Denoting by $\mathcal{A}^{(0)}(v^+)$, $\mathcal{A}^{(1)}(v^+)$, $\mathcal{A}^{(2)}(v^+)$ the admissible sets induced, respectively, by the zeroth-, first-, and second-order bounds used in this work, we have the pointwise hierarchy
\begin{equation}\label{eq:hierarchy_bounds}
\mathcal{A}^{(0)}(v^+) \supseteq \mathcal{A}^{(1)}(v^+) \supseteq \mathcal{A}^{(2)}(v^+),
\qquad\text{for all }v^+\in[0,1].
\end{equation}
The explicit expressions for these bounds and their inclusion relations are developed in Sections~\ref{sec:fmo_zero}--\ref{sec:fmo_hashin}.

After imposing the global volume constraint
\begin{equation}\label{eq:vol_cont}
\int_{\Omega} v^+(\tens{x})\,\mathrm{d}\tens{x} \le \overline{V}\,\lvert\Omega\rvert,\qquad \overline{V}\in[0,1],
\end{equation}
the continuum free material optimization problem reads as
\begin{subequations}\label{eq:fmo_continuum}
\begin{align}
\inf_{\tensf{E},\,v^+}\quad & J(\tensf{E})\label{eq:fmo_continuum_obj}\\
\text{s.t.}\quad
& \tens{u}_j(\tensf{E})\in\mathcal{U}_0 \text{ solves \eqref{eq:state_cont} for } j=1,\dots,\nlc,\label{eq:fmo_continuum_state}\\
& \tensf{E}(\tens{x}) \text{ satisfies \eqref{eq:strain_bounds_general} for a.e. }\tens{x}\in\Omega,\\
& 0\le v^+(\tens{x})\le 1 \ \text{for a.e. }\tens{x}\in\Omega,\label{eq:fmo_continuum_bounds}\\
& \int_{\Omega} v^+(\tens{x})\,\mathrm{d}\tens{x} \le \overline{V}\,\lvert\Omega\rvert.\label{eq:fmo_continuum_volume}
\end{align}
\end{subequations}
By \eqref{eq:uniform_ellipticity_E}, the bilinear form $a_{\tensf{E}}$ is continuous and coercive on $\mathcal{U}_0$. Indeed, for all $\tens{u},\tens{v}\in\mathcal{U}_0$,
\begin{equation}\label{eq:a_continuity}
\lvert a_{\tensf{E}}(\tens{u},\tens{v})\rvert
\le \beta\,\lVert\strain(\tens{u})\rVert_{L^2(\Omega)}\,\lVert\strain(\tens{v})\rVert_{L^2(\Omega)}.
\end{equation}
Moreover, Korn's inequality \citep[Definition~3.1]{NecasHlavacek_1981} on $\mathcal{U}_0$ (using $\lvert\Gamma_\mathrm D\rvert>0$) yields a constant $C_\mathrm{K}>0$ such that
\begin{equation}\label{eq:a_coercivity}
a_{\tensf{E}}(\tens{u},\tens{u})
\ge \alpha\,\lVert\strain(\tens{u})\rVert_{L^2(\Omega)}^2
\ge \frac{\alpha}{C_\mathrm{K}^2}\,\lVert\tens{u}\rVert_{H^1(\Omega)}^2
\qquad \forall \tens{u}\in\mathcal{U}_0.
\end{equation}
Hence, \eqref{eq:state_cont} has a unique solution $\tens{u}_j(\tensf{E})\in\mathcal{U}_0$ for each $j$ due to the Lax--Milgram theorem \citep[Corollary 5.8]{Brezis2011}.

To discuss existence of minimizers of \eqref{eq:fmo_continuum}, we introduce the admissible set
\begin{equation}\label{eq:admissible_set_D}
\mathcal{D}
:=\Big\{(\tensf{E},v^+)\in L^\infty(\Omega;\mathbb{S}^{d\times d\times d\times d})\times L^\infty(\Omega)\;:\;
0\le v^+\le 1\ \text{a.e.},\ \eqref{eq:vol_cont}\ \text{holds, and }\tensf{E}\ \text{satisfies }\eqref{eq:strain_bounds_general}\Big\}.
\end{equation}
For each fixed $v\in[0,1]$, define the local (pointwise) admissible set of tensors by
\begin{equation}\label{eq:local_admissible_set}
\mathcal{A}(v)
:=\Big\{\tensf{E}\in\mathbb{S}^{d\times d\times d\times d}\;:\;
\Phi_{\mathrm{lb}}(\strain;v)\le \langle\tensf{E}\strain,\strain\rangle\le \Phi_{\mathrm{ub}}(\strain;v)\ \ \forall \strain\in\mathbb{S}^{d\times d}\Big\},
\end{equation}
so that \eqref{eq:strain_bounds_general} is equivalently the pointwise constraint $\tensf{E}(\tens{x})\in \mathcal{A}(v^+(\tens{x}))$ for a.e.\ $\tens{x}\in\Omega$.

The upper bound in \eqref{eq:uniform_ellipticity_E} implies $\lVert\tensf{E}(\tens{x})\rVert_{\mathrm{op}}\le \beta$ for a.e.\ $\tens{x}\in\Omega$, where $\lVert\tensf{E}\rVert_{\mathrm{op}}:=\sup_{\strain\in\mathbb{S}^{d\times d}\setminus\{0\}} \langle\tensf{E}\strain,\strain\rangle/\lvert\strain\rvert^2$. Consequently $\lVert\tensf{E}\rVert_{L^\infty(\Omega;\mathrm{op})}\le \beta$ for all admissible designs, and together with $0\le v^+\le 1$ this yields boundedness of $\mathcal{D}$ in $L^\infty(\Omega;\mathbb{S}^{d\times d\times d\times d})\times L^\infty(\Omega)$. Hence, by the Banach--Alaoglu theorem \citep[Theorem 3.16]{Brezis2011}, any minimizing sequence admits a subsequence
\begin{equation}\label{eq:weak_star_subsequence}
(\tensf{E}_k,v_k^+)\ \overset{\star}{\rightharpoonup}\ (\tensf{E},v^+)
\qquad\text{in }L^\infty(\Omega;\mathbb{S}^{d\times d\times d\times d})\times L^\infty(\Omega).
\end{equation}
The simple constraints $0\le v^+\le 1$ and the volume constraint \eqref{eq:vol_cont} are weak-$\star$ closed. It remains to discuss the weak-$\ast$ lower semicontinuity of the objective and the stability of the pointwise realizability constraint
$\tensf{E}(\tens{x})\in\mathcal{A}(v^+(\tens{x}))$.

\subsubsection{Zeroth-order and Voigt bounds}
For the zeroth-order and Voigt admissible sets defined later in \eqref{eq:fmo_zo_cont} and \eqref{eq:A1_def}, respectively, the graph
\begin{equation}
\mathcal{G}:=\{(v,\tensf{E})\in[0,1]\times \mathbb{S}^{d\times d\times d\times d}:\ \tensf{E}\in\mathcal{A}(v)\}
\end{equation}
is a closed convex subset of a finite-dimensional space. Consider the induced set of admissible fields
\begin{equation}\label{eq:K_G}
\mathcal{K}_{\mathcal{G}}
:=\big\{(\tensf{E},v^+)\in L^\infty(\Omega;\mathbb{S}^{d\times d\times d\times d})\times L^\infty(\Omega):\
(v^+(\tens{x}),\tensf{E}(\tens{x}))\in\mathcal{G}\ \text{for a.e. }\tens{x}\in\Omega\big\}.
\end{equation}
Let $\mathrm{dist}_{\mathcal{G}}$ be the Euclidean distance to $\mathcal{G}$. Since $\mathcal{G}$ is closed and convex, $\mathrm{dist}_{\mathcal{G}}$ is convex and continuous, and, therefore, the functional
\begin{equation}
(\tensf{E},v^+) \ \mapsto\ \int_\Omega \mathrm{dist}_{\mathcal{G}}\big(v^+(\tens{x}),\tensf{E}(\tens{x})\big)\,\mathrm{d}\tens{x}
\end{equation}
is weak-$\star$ lower semicontinuous on $L^\infty\times L^\infty$. Hence, if $(\tensf{E}_k,v_k^+)\in \mathcal{K}_{\mathcal{G}}$ and $(\tensf{E}_k,v_k^+)\rightharpoonup^\star(\tensf{E},v^+)$, then the above integral is $0$ along the sequence and thus also at the limit, which implies $(\tensf{E},v^+)\in\mathcal{K}_{\mathcal{G}}$. Consequently, for these convex bound choices, the full admissible set $\mathcal{D}$ in \eqref{eq:admissible_set_D} is weak-$\star$ closed, and since it is bounded it is weak-$\star$ compact by the Banach--Alaoglu theorem.

It remains to verify lower semicontinuity of the objective. For each loadcase $j$, the compliance admits the dual representation
\begin{equation}\label{eq:compliance_sup}
c_j(\tensf{E})=\sup_{\tens{u}\in\mathcal{U}_0}\Big(2\,\ell_j(\tens{u})-a_{\tensf{E}}(\tens{u},\tens{u})\Big),
\end{equation}
obtained from the minimum principle for the potential energy. For fixed $\tens{u}$, the map $\tensf{E}\mapsto a_{\tensf{E}}(\tens{u},\tens{u})$ is weak-$\star$ continuous (it is linear in $\tensf{E}$), hence \eqref{eq:compliance_sup} is the supremum of weak-$\star$ continuous affine functionals and, therefore, weak-$\star$ lower semicontinuous. Thus, $J=\sum_{j=1}^{\nlc}c_j$ is weak-$\star$ lower semicontinuous as well, and the direct method yields existence of a minimizer $(\tensf{E}^\star,v^{+\star})\in\mathcal{D}$. The associated states $\tens{u}_j(\tensf{E}^\star)$ exist uniquely by the Lax--Milgram theorem.

\subsubsection{Hashin--Shtrikman-type bounds}
For Hashin--Shtrikman-type constraints (with the pointwise admissible set $\mathcal A^{(2)}(v)$ defined in \eqref{eq:A2_def}), the feasible set is convex in $\tensf E$ for fixed $v^+$ but in general not jointly convex in $(v^+,\tensf E)$ (cf.\ Lemma~\ref{lem:nonconvex}). In particular, the pointwise constraint set \eqref{eq:K_G} need not be weak-$\star$ closed. Moreover, even under the uniform ellipticity bounds \eqref{eq:uniform_ellipticity_E}, weak-$\star$ convergence of coefficients is in general not compatible with passing to the limit in the state equation; see, e.g., the discussion in \citep[Sections~2.2--2.3]{Haslinger2010}.

Following \citep{Haslinger2010}, we, therefore, formulate an existence statement in the homogenization topology of $H$-convergence for the stiffness field. Let
\begin{equation}
\mathcal E_{\alpha,\beta}
:=\{\tensf E\in L^\infty(\Omega;\mathbb{S}^{d\times d\times d\times d}) : 
\alpha \tensf{I}_{d\times d\times d\times d} \preceq \tensf{E}(\tens x)\preceq \beta \tensf{I}_{d\times d\times d\times d}\ \text{a.e. in }\Omega\},
\end{equation}
where $\alpha,\beta$ are as in \eqref{eq:uniform_ellipticity_E}. Then any admissible stiffness field belongs to $\mathcal E_{\alpha,\beta}$, and $\mathcal E_{\alpha,\beta}$ is sequentially compact with respect to $H$-convergence (cf.\ \citep[Theorem~2.2]{Haslinger2010}).

Define the $H$-relaxed admissible set as the closure of $\mathcal D$ in the product topology $(\tensf E_k \to_H \tensf E)\times(v_k^+\rightharpoonup^\star v^+)$, i.e.,
\begin{equation}
\mathcal D^{H}:=\overline{\mathcal D}^{\,(\to_H)\times(\rightharpoonup^\star)}.
\end{equation}
Let $(\tensf E_k,v_k^+)\subset\mathcal D$ be a minimizing sequence. By the Banach--Alaoglu theorem, we may assume (up to a subsequence) $v_k^+\rightharpoonup^\star v^{+\star}$ in $L^\infty$. By $H$-compactness of $\mathcal E_{\alpha,\beta}$ we may further assume $\tensf E_k\to_H \tensf E^\star$. Hence $(\tensf E^\star,v^{+\star})\in\mathcal D^{H}$ by definition.

Finally, along any $H$-convergent sequence $\tensf E_k \xrightarrow{H} \tensf E^\star$, the corresponding state solutions satisfy
\[
\tens u_{j}(\tensf E_k)\rightharpoonup \tens u_{j}(\tensf E^\star)\quad\text{in }\mathcal U_0,\qquad j=1,\dots,\nlc,
\]
and the compliance is lower semicontinuous with respect to this convergence; see the $H$-lower semicontinuity condition \citep[(2.14)]{Haslinger2010} and the fact that the compliance is a typical example satisfying it. Therefore,
\begin{equation}
J(\tensf E^\star)\le \liminf_{k\to\infty} J(\tensf E_k),
\end{equation}
and the direct-method existence argument (cf.\ \citep[Theorem~2.8]{Haslinger2010}) yields existence of a minimizer of the relaxed problem \eqref{eq:fmo_continuum} posed over $\mathcal D^H$. We emphasize that $(\tensf E^\star,v^{+\star})$ is an $H$-relaxed limit of admissible designs; in general, it need not satisfy the original Hashin--Shtrikman pointwise constraints in a strongly closed sense under weak-$\star$ convergence.

\begin{remark}[Orthotropic restriction]
If we further restrict admissible tensors to the orthotropic symmetry class with free rotations, we mean fields $\tensf E$ such that, for a.e.\ $\tens x\in\Omega$, there exist an angle $\varphi(\tens x)$ and an orthotropic base tensor $\tensf E^{\mathrm b}(\tens x)$ with $(\tensf E^{\mathrm b})_{1112}=(\tensf E^{\mathrm b})_{2212}=0$, and $\tensf E(\tens x)$ is obtained from $\tensf E^{\mathrm b}(\tens x)$ by planar rotation; see Section~\ref{sec:fmo_ortho}. This pointwise admissible set is, in general, not weak-$\star$ closed: weak-$\star$ limits may average rapidly oscillating orientation fields, while convex combinations of differently rotated orthotropic tensors need not remain orthotropic. Existence is, therefore, understood in the relaxed sense, i.e., by minimizing over the closure of orthotropic-admissible designs in $(\to_H)\times(\rightharpoonup^\star)$.

For the model classes considered in this paper:
\begin{itemize}
    \item Zeroth-order bounds: in the single-loadcase continuum setting, at least one minimizer of \eqref{eq:zo_continuum_single} can be chosen orthotropic (Proposition~\ref{prop:zo_orthotropic_representative}). This is a property of the original model without imposing orthotropy a priori; consequently, adding an orthotropic restriction does not increase the optimal value in that setting.
    \item Voigt bounds: isotropic optimal minimizers exist both in the discrete model \eqref{eq:fmo_voigt} and in the continuum Voigt setting (Proposition~\ref{prop:voigt_iso}). Thus, without imposing symmetry a priori, one can choose an isotropic (hence orthotropic) minimizer, and adding an orthotropic restriction does not change the optimal value.
    \item Hashin--Shtrikman bounds: here the orthotropic class is not weak-$\star$ closed, so existence is interpreted via the relaxed closure in $(\to_H)\times(\rightharpoonup^\star)$. Theorem~\ref{th:tight} shows that, in the two-dimensional single-loadcase continuum setting with well-ordered isotropic phases, the relaxed value is attained in the relaxation sense by rank-one and rank-two sequential laminates, which are orthotropic. Therefore, imposing orthotropy does not change the continuum relaxed optimal value.
\end{itemize}
\end{remark}

For numerical solution, we discretize the state space by $\mathcal{U}_h\subset\mathcal{U}_0$ and parametrize $(\tensf{E},v^+)$ by elementwise constants. Then \eqref{eq:strain_bounds_general} becomes a collection of local constraints on the element variables. For the zeroth-order and Voigt bounds, these constraints are jointly convex and can be written as linear matrix inequalities, while Hashin--Shtrikman-type bounds are convex in $\tensf{E}$ for fixed $v^+$ but not jointly convex in $(\tensf{E},v^+)$. The resulting discrete formulations are given in Sections~\ref{sec:fmo_zero}--\ref{sec:fmo_hashin}.

\subsection{FMO problem under zeroth-order bounds}\label{sec:fmo_zero}

We start with the weakest pointwise admissible set $\mathcal{A}^{(0)}(v)$, obtained by imposing only uniform bounds on the effective elasticity tensor \citep{Lobos2016,Ringertz1993},
\begin{equation}\label{eq:fmo_zo_cont}
    \mathcal{A}^{(0)}(v):=
    \left\{\tensf E\in \mathbb{S}^{d\times d\times d\times d}_{\succ 0}\;:\;
    \Emin \preceq \tensf E \preceq \Emax
    \right\},\qquad v\in[0,1],
\end{equation}
where $\Emin,\Emax$ are the (isotropic) tensors of the soft and stiff constituents such that $0 \prec \Emin \prec \Emax$. In contrast to the Voigt and Hashin--Shtrikman bounds considered later, \eqref{eq:fmo_zo_cont} does not couple $\tensf E$ to a local volume-fraction variable $v^+$. We therefore first derive the discrete FMO formulation under zeroth-order bounds for computation and then return to the single-loadcase continuum setting to show that an orthotropic minimizer exists.

For numerical treatment, we now pass to a finite-element discretization of $\Omega$ into $\nel$ elements and approximate the effective tensor field by elementwise constants $\Ei$. Following the standard trace-constrained resource modeling in free material design \citep{Ringertz1993,Bendsoe1995,Zowe1997,BenTal2000,Czarnecki2012,Czarnecki2014}, and since \eqref{eq:fmo_zo_cont} does not include an explicit local volume fraction variable, we introduce auxiliary elementwise volume fractions $v_i^+\in[0,1]$ and enforce a local trace-level mixture inequality
\begin{equation}\label{eq:fmo_zo_volume_local}
\Tr(\Ei)\le (1-v_i^+)\Tr(\Emin)+v_i^+\Tr(\Emax),\qquad i=1,\dots,\nel.
\end{equation}
Assuming equisized elements and imposing the average volume-fraction bound $\frac{1}{\nel}\sum_{i=1}^{\nel} v_i^+\le \overline{V}$, summing \eqref{eq:fmo_zo_volume_local} over all elements gives
\begin{equation}
\frac{1}{\nel}\sum_{i=1}^{\nel}\Tr(\Ei)
\le \overline{V}\Tr(\Emax)+(1-\overline{V})\Tr(\Emin).
\end{equation}
Conversely, if $\Emin\preceq \Ei\preceq \Emax$ and the global trace bound above holds, then defining
\begin{equation}
v_i^+:=\frac{\Tr(\Ei)-\Tr(\Emin)}{\Tr(\Emax)-\Tr(\Emin)}\in[0,1]
\end{equation}
yields \eqref{eq:fmo_zo_volume_local} with equality, and $\frac1{\nel}\sum_i v_i^+\le \overline V$. Hence, after eliminating $\{v_i^+\}$, this auxiliary formulation is equivalent to the global trace constraint \eqref{eq:fmo_volume}.

For each loadcase $j=1,\dots,\nlc$, let $\sevek f_j\in\mathbb R^{n_{\mathrm{dof},j}}$ be the discrete load vector and $\semtrx K_j(\{\Ei\})\in\mathbb{S}^{n_{\mathrm{dof},j}\times n_{\mathrm{dof},j}}_{\succ 0}$ the assembled stiffness matrix. The multi-loadcase compliance admits the standard Schur-complement epigraph reformulation as a semidefinite program \citep[Appendix~A]{Boyd2004}, as commonly used in free material and compliance optimization \citep{BenTal2000,Kocvara2020}. The resulting minimum-compliance free material optimization problem reads
\begin{subequations}\label{eq:fmo}
\begin{align}
    \min_{\{c_j\}, \{\Ei\}} \quad & \sum_{j=1}^{\nlc} c_j
    \label{eq:fmo_compliance}\\
    \mathrm{s.t.}\quad &
    \begin{pmatrix}
        c_j & -\sevek{f}_j^\trn\\
        -\sevek{f}_j & \semtrx{K}_j\!\left(\{\Ei\}\right)
    \end{pmatrix}\succeq 0, \qquad j = 1,\dots,\nlc,
    \label{eq:fmo_lmi}\\
    & \Emin \preceq \Ei \preceq \Emax, \qquad i = 1,\dots,\nel,
    \label{eq:fmo_zo}\\
    & \frac{1}{\nel}\sum_{i=1}^{\nel} \Tr(\Ei)
      \le \overline{V}\,\Tr(\Emax) + (1-\overline{V})\,\Tr(\Emin).
    \label{eq:fmo_volume}
\end{align}
\end{subequations}
The linear matrix inequality \eqref{eq:fmo_lmi} enforces $c_j \ge \sevek{f}_j^\trn \semtrx{K}_j(\{\Ei\})^{-1} \sevek{f}_j$ by the Schur complement, so that at optimality $c_j$ equals the compliance of loadcase $j$. The tensor bound \eqref{eq:fmo_zo} is equivalent to the strain-energy inequalities
\begin{equation}
\langle \Emin \strain,\strain\rangle \le \langle \Ei \strain,\strain\rangle \le \langle \Emax \strain,\strain\rangle,
\qquad \forall\,\strain\in\mathbb{S}^{d\times d},
\end{equation}
i.e., no element can be stiffer than $\Emax$ or softer than $\Emin$ in any direction.

Problem \eqref{eq:fmo} is a convex finite-dimensional semidefinite program and can, therefore, be solved to global optimality. We refer to \eqref{eq:fmo} as the zeroth-order free material optimization model (ZO-FMO). Its main drawback is that the admissible set \eqref{eq:fmo_zo} is typically very large; in particular, it does not encode any microstructural information beyond the uniform ellipticity range $\Emin\preceq\Ei\preceq\Emax$.

Before introducing tighter bounds, we record a structural property that connects the zeroth-order (ZO) model to our orthotropic focus at the continuum level. In the single-loadcase setting, the continuum ZO problem
\begin{subequations}\label{eq:zo_continuum_single}
\begin{align}
\inf_{\tensf E\in L^\infty(\Omega;\mathbb{S}^{d\times d\times d\times d}_{\succ0})}\quad
& c_1(\tensf E)\\
\text{s.t.}\quad
& \Emin \preceq \tensf E(\tens x)\preceq \Emax \ \text{for a.e. }\tens x\in\Omega,\\
&
\frac{1}{|\Omega|}\int_\Omega \Tr(\tensf E(\tens x))\,\mathrm d\tens x
\le \overline V\Tr(\Emax)+(1-\overline V)\Tr(\Emin)
\end{align}
\end{subequations}
is the reference model below; we will show that it admits an orthotropic optimal solution.

For later use, recall the complementary-energy representation (minimum complementary energy principle): for $\nlc=1$ and any admissible $\tensf E$,
\begin{equation}\label{eq:zo_complementary}
c_1(\tensf E)
=
\min_{\stress\in\Sigma_{\mathrm{ad},1}}
\int_\Omega \left\langle \tensf E(\tens x)^{-1}\stress(\tens x),\stress(\tens x)\right\rangle\,\mathrm d\tens x,
\end{equation}
where
\begin{equation}\label{eq:zo_sigma_ad}
\Sigma_{\mathrm{ad},1}
:=
\left\{
\stress\in L^2(\Omega;\mathbb S^{d\times d})\;:\;
\int_\Omega \langle \stress,\strain(\tens v)\rangle\,\mathrm d\tens x=\ell_1(\tens v)\quad \forall \tens v\in\mathcal U_0
\right\}.
\end{equation}
The minimum in \eqref{eq:zo_complementary} is attained: $\Sigma_{\mathrm{ad},1}$ is a closed affine subspace of
$L^2(\Omega;\mathbb S^{d\times d})$, and the functional
$\stress\mapsto\int_\Omega \langle \tensf E^{-1}\stress,\stress\rangle\,\mathrm d\tens x$
is coercive and strictly convex for admissible $\tensf E$.

The structural role of principal directions in compliance-driven design is classical in orientation optimization of orthotropic materials \citep{Pedersen1989} and in homogenization-based laminate constructions \citep{Allaire1993,Allaire2002}. The next proposition gives a corresponding representative-minimizer statement directly for the single-loadcase continuum zeroth-order FMO model \eqref{eq:zo_continuum_single}.

\begin{proposition}[Orthotropic minimizer for the single-loadcase continuum ZO problem]\label{prop:zo_orthotropic_representative}
Assume $\nlc=1$ and $d=2$. Then \eqref{eq:zo_continuum_single} admits a minimizer, and at least one minimizer $\tensf E^{\mathrm{ortho}}$ can be chosen such that, for a.e.\ $\tens x\in\Omega$, $\tensf E^{\mathrm{ortho}}(\tens x)$ is orthotropic in a principal basis of an optimal stress field for \eqref{eq:zo_complementary}.
\begin{proof}
Existence of a minimizer for \eqref{eq:zo_continuum_single} follows from Section~\ref{sec:cont_fmo}. Let $\tensf E^\star$ be such a minimizer.

Pick $\stress^\star\in\Sigma_{\mathrm{ad},1}$ minimizing \eqref{eq:zo_complementary} for $\tensf E^\star$. Using the spectral decomposition in 2D, choose a measurable orthogonal field $\semtrx U(\tens x)$ that diagonalizes $\stress^\star(\tens x)$ for a.e.\ $\tens x$ (and set $\semtrx U=\semtrx I$ on repeated-eigenvalue points). Set $\semtrx M:=\mathrm{Diag}(1,-1)$ and $\semtrx Q:=\semtrx U\semtrx M\semtrx U^\trn$. For any fourth-order tensor $\tensf A$, denote by $\mathcal T_{\semtrx Q}(\tensf A)$ its orthogonal transform:
$\big(\mathcal T_{\semtrx Q}(\tensf A)\big)_{ijkl}:=\sum_{p,q,r,s=1}^{d}(\semtrx{Q})_{ip}(\semtrx{Q})_{jq}(\semtrx{Q})_{kr}(\semtrx{Q})_{ls}(\tensf{A})_{pqrs}$.
Define
\begin{equation}
\tensf E^{\mathrm{ortho}}
:=\tfrac12\big(\tensf E^\star+\mathcal T_{\semtrx Q}(\tensf E^\star)\big)
\quad\text{pointwise in }\Omega.
\end{equation}
Because $\Emin,\Emax$ are isotropic, $\mathcal T_{\semtrx Q}$ preserves the bounds $\Emin\preceq\cdot\preceq\Emax$; by convexity, $\tensf E^{\mathrm{ortho}}$ is feasible for \eqref{eq:zo_continuum_single}. Also $\Tr(\mathcal T_{\semtrx Q}(\tensf A))=\Tr(\tensf A)$, so the trace constraint is unchanged.

For fixed $\stress^\star(\tens x)$, the map $\tensf A\mapsto\langle \tensf A^{-1}\stress^\star,\stress^\star\rangle$ is convex on positive-definite tensors. Moreover, by orthogonality and $\semtrx Q^\trn\stress^\star\semtrx Q=\stress^\star$, so that $\big(\mathcal T_{\semtrx Q}(\tensf A)\big)^{-1} = \mathcal T_{\semtrx Q}\!\left(\tensf A^{-1}\right)$ and
\begin{equation}
\left\langle \big(\mathcal T_{\semtrx Q}(\tensf E^\star)\big)^{-1}\stress^\star,\stress^\star\right\rangle
=
\left\langle (\tensf E^\star)^{-1}\stress^\star,\stress^\star\right\rangle.
\end{equation}
Hence, pointwise in $\Omega$,
\begin{equation}
\left\langle (\tensf E^{\mathrm{ortho}})^{-1}\stress^\star,\stress^\star\right\rangle
\le
\tfrac12\left\langle (\tensf E^\star)^{-1}\stress^\star,\stress^\star\right\rangle
+\tfrac12\left\langle \big(\mathcal T_{\semtrx Q}(\tensf E^\star)\big)^{-1}\stress^\star,\stress^\star\right\rangle
=
\left\langle (\tensf E^\star)^{-1}\stress^\star,\stress^\star\right\rangle.
\end{equation}
Integrating gives
\begin{equation}
\int_\Omega \left\langle (\tensf E^{\mathrm{ortho}})^{-1}\stress^\star,\stress^\star\right\rangle \mathrm d\tens x
\le
\int_\Omega \left\langle (\tensf E^\star)^{-1}\stress^\star,\stress^\star\right\rangle \mathrm d\tens x
=c_1(\tensf E^\star).
\end{equation}
By \eqref{eq:zo_complementary},
\begin{equation}
c_1(\tensf E^{\mathrm{ortho}})
\le
\int_\Omega \left\langle (\tensf E^{\mathrm{ortho}})^{-1}\stress^\star,\stress^\star\right\rangle \mathrm d\tens x
\le c_1(\tensf E^\star).
\end{equation}
Since $\tensf E^\star$ is optimal for \eqref{eq:zo_continuum_single}, we conclude $c_1(\tensf E^{\mathrm{ortho}})=c_1(\tensf E^\star)$, so $\tensf E^{\mathrm{ortho}}$ is also optimal. In the principal basis of $\stress^\star$, averaging with $\semtrx M$ cancels the shear-coupling terms, hence $\tensf E^{\mathrm{ortho}}$ is orthotropic.
\end{proof}
\end{proposition}

Proposition~\ref{prop:zo_orthotropic_representative} is a global continuum statement for single-loadcase ZO-FMO: at least one minimizer can be chosen orthotropic. This argument is continuum-level and does not transfer directly to fixed finite-element discretizations. For low-order elements, stresses can be elementwise constant, but such discretizations are prone to checkerboarding unless regularized; this regularization averages neighboring tensors and generally destroys local orthotropic structure. Using higher-order elements mitigates these artifacts, but stresses are then no longer elementwise constant, so local tensors must be optimal with respect to multiple stress states, which again typically breaks orthotropy. 

We now tighten ZO-FMO by adding Voigt upper bounds; in that case, we obtain an even stronger structural result (existence of isotropic optimal minimizers, Proposition~\ref{prop:voigt_iso}).

\subsection{FMO problem under Voigt upper bounds}\label{sec:fmo_voigt}

To strengthen the connection between the FMO problem and the realizable elasticity tensors without sacrificing convexity, we proceed by exploiting the Voigt upper bound \citep{Hill1963} for effective elasticity tensors of composites
\begin{equation}\label{eq:voigt_bound}
    \Ei \preceq \EV := (1-v_i^+) \Emin + v_i^+ \Emax,
\end{equation}
where $v_i^+$ is the volume fraction of the stiffer material $\Emax$ in the $i$-th element. The inequality \eqref{eq:voigt_bound} is equivalent to the strain-energy bound $\langle \Ei\strain,\strain\rangle \le \langle \EV \strain,\strain\rangle$ for all $\strain\in\mathbb{S}^{d\times d}$. Moreover, since $v_i^+\in[0,1]$ and $\Emin\preceq \Emax$, we also have $\EV\preceq \Emax$. The Voigt constraint \eqref{eq:voigt_bound} thus induces the first-order admissible set 
\begin{equation}\label{eq:A1_def}
\mathcal A^{(1)}(v)
:=\Big\{\tensf E\in\mathbb{S}^{d\times d\times d\times d}:\ \Emin \preceq \tensf E \preceq (1-v)\Emin + v\Emax\Big\},\qquad v\in[0,1].
\end{equation}

In what follows, we exploit the Voigt upper bound \eqref{eq:voigt_bound} to provide a better estimate of the microstructural relative volume. In particular, due to the Loewner ordering, we have
\begin{equation}
    \Tr(\Ei) \le \Tr((1-v_i^+) \Emin + v_i^+ \Emax) \le \Tr(\Emax),
\end{equation}
and, thus, also for the average of the elasticity tensors $\Ei$ over all elements, assuming equisized elements for simplicity,
\begin{equation}\label{eq:fmo_volume_average}
    \frac{1}{\nel}\sum_{i=1}^{\nel} \Tr(\Ei) \le \Tr(\Emin) + \frac{1}{\nel}[\Tr(\Emax)-\Tr(\Emin)] \sum_{i=1}^{\nel} v_i^+ \le \Tr(\Emax).
\end{equation}
Now, allowing the average trace to be at most $\overline{V}\Tr(\Emax) + (1-\overline{V}) \Tr(\Emin)$ as in \eqref{eq:fmo_volume}, the tighter volume estimator reads as
\begin{equation}\label{eq:fmo_voigt_volume_original}
    \Tr(\Emin) + \frac{1}{\nel}[\Tr(\Emax)-\Tr(\Emin)] \sum_{i=1}^{\nel} v_i^+ \le \overline{V} \Tr(\Emax) + (1-\overline{V}) \Tr(\Emin),
\end{equation}
which, after simplification, provides the Voigt free material optimization problem (V-FMO)
\begin{subequations}\label{eq:fmo_voigt}
\begin{align}
    \min_{\{c_j\}, \{\Ei\}, \{v_i^+\}} \; & \sum_{j=1}^{\nlc} c_j \label{eq:fmo_voigt_compliance}\\
    \mathrm{s.t.}\; & \begin{pmatrix}
        c_j & -\sevek{f}_j^\trn\\
        -\sevek{f}_j & \semtrx{K}_j\left(\{\Ei\}\right)
    \end{pmatrix}\succeq 0, \quad j = 1,\dots,\nlc,\label{eq:fmo_voigt_lmi}\\
    & \Emin \preceq \Ei \preceq (1-v_i^+) \Emin + v_i^+ \Emax, \quad i = 1,\dots,\nel,\label{eq:fmo_voigt_fo}\\
    & \frac{1}{\nel}\sum_{i=1}^{\nel} v_i^+ \le \overline{V},\label{eq:fmo_voigt_volume}\\
    & 0 \le v_i^+ \le 1, \forall i \in 1,\dots,\nel.
\end{align}
\end{subequations}
Notice that the constraint \eqref{eq:fmo_voigt_volume} now directly limits the average volume fraction of the stiffer material $\Emax$.

\begin{figure}[!b]
    \includegraphics[width=\linewidth]{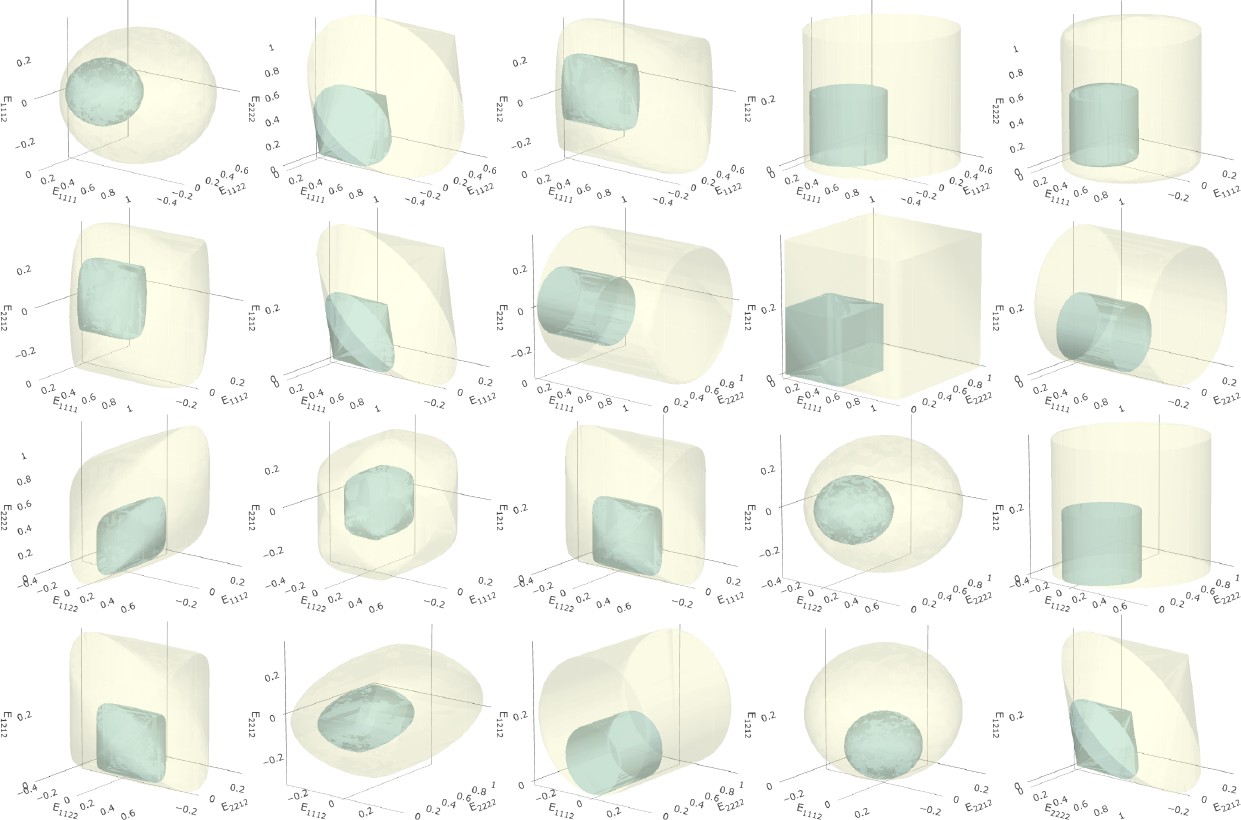}
    \caption{
        Comparison of the admissible sets $\mathcal A^{(0)}(0.5)$ (zeroth-order, yellow) and $\mathcal A^{(1)}(0.5)$ (Voigt, blue). The sets are shown in a selected three-dimensional projection of $\Ei$ as outer-envelope boundary surfaces, obtained from $750$ sampled strains $\strain$ with Frobenius norm $\sqrt{2}/2$. Material parameters: $\kappa^- = 0.714\times10^{-9}$, $\kappa^+ = 0.714$, $\mu^- = 0.385\times10^{-9}$, and $\mu^+ = 0.385$.
    }
    \label{fig:zero_voigt}
\end{figure}

The V-FMO problem \eqref{eq:fmo_voigt} is indeed tighter than the FMO problem under zeroth-order bounds \eqref{eq:fmo}. In particular, for $\overline{V} \in (0,1)$ and after projecting out the auxiliary variables $\{v_i^+\}$, the feasible set in $(\{c_j\},\{\Ei\})$-space induced by \eqref{eq:fmo_voigt} is strictly contained in the feasible set of \eqref{eq:fmo}. Figure~\ref{fig:zero_voigt} provides pointwise geometric intuition at fixed $v_i^+$.

\begin{proposition}[Strict feasible-set inclusion of V-FMO in ZO-FMO]\label{prop:voigt_inclusion}
    Assume $0<\overline{V}<1$ and $\Emin\prec\Emax$, i.e., the phase bulk and shear moduli satisfying $0<\kappa^-<\kappa^+$ and $0<\mu^-<\mu^+$. With
    $
    \mathcal F_{\mathrm{ZO}}
    :=\{(\{c_j\},\{\Ei\}):\ (\{c_j\},\{\Ei\})\ \text{feasible for \eqref{eq:fmo}}\},
    $ and $\mathcal F_{\mathrm{FO}}
    :=\{(\{c_j\},\{\Ei\}):\ \exists\{v_i^+\}\ \text{s.t.}\ (\{c_j\},\{\Ei\},\{v_i^+\})\ \text{feasible for \eqref{eq:fmo_voigt}}\}$, it holds that $\mathcal F_{\mathrm{FO}}\subsetneq\mathcal F_{\mathrm{ZO}}$.

    \begin{proof}
        Inclusion $\mathcal F_{\mathrm{FO}}\subseteq\mathcal F_{\mathrm{ZO}}$ is immediate: \eqref{eq:fmo_voigt_lmi} is the same as \eqref{eq:fmo_lmi}, \eqref{eq:fmo_voigt_fo} with $v_i^+\in[0,1]$ implies \eqref{eq:fmo_zo}, and
        \begin{equation}
        \frac1\nel\sum_{i=1}^{\nel}\Tr(\Ei)\le
        \Tr(\Emin)+\frac{\Tr(\Emax)-\Tr(\Emin)}{\nel}\sum_{i=1}^{\nel}v_i^+
        \le \overline{V}\Tr(\Emax)+(1-\overline{V})\Tr(\Emin),
        \end{equation}
        so \eqref{eq:fmo_volume} holds.

        For strictness, write the bulk and shear moduli differences as $\Delta\kappa:=\kappa^+-\kappa^->0$, $\Delta\mu:=\mu^+-\mu^->0$, and
        \begin{equation}
        \alpha:=\frac{\Delta\kappa}{\Delta\kappa+2\Delta\mu}\in(0,1),\qquad
        \theta:=\min\{1,\overline{V}/\alpha\},
        \end{equation}
        so $\theta>\overline{V}$ and $\alpha\theta\le\overline{V}$.
        Define an isotropic tensor $\widehat{\tensf E}$ by $\widehat\kappa:=\kappa^-+\theta\Delta\kappa$ and $\widehat\mu:=\mu^-$, and set $\Ei=\widehat{\tensf E}$ for all $i$. Then $\Emin\preceq\Ei\preceq\Emax$. Moreover, for $d=2$,
        \begin{equation}
        \frac{\Tr(\Ei)-\Tr(\Emin)}{\Tr(\Emax)-\Tr(\Emin)}
        =\frac{2\theta\Delta\kappa}{2\Delta\kappa+4\Delta\mu}
        =\alpha\theta\le\overline{V},
        \end{equation}
        hence \eqref{eq:fmo_volume} holds. Choosing $c_j:=\sevek f_j^\trn\semtrx K_j(\{\Ei\})^{-1}\sevek f_j$ gives \eqref{eq:fmo_lmi}, so $(\{c_j\},\{\Ei\})\in\mathcal F_{\mathrm{ZO}}$.

        If $(\{c_j\},\{\Ei\})\in\mathcal F_{\mathrm{FO}}$, then for some $\{v_i^+\}$, \eqref{eq:fmo_voigt_fo} implies $\kappa^-+\theta\Delta\kappa=\widehat\kappa\le \kappa^-+v_i^+\Delta\kappa$, hence $v_i^+\ge\theta$ for all $i$. Therefore $\frac1\nel\sum_i v_i^+\ge\theta>\overline{V}$, contradicting \eqref{eq:fmo_voigt_volume}. Thus, $(\{c_j\},\{\Ei\})\notin\mathcal F_{\mathrm{FO}}$, and $\mathcal F_{\mathrm{FO}}\subsetneq\mathcal F_{\mathrm{ZO}}$.
    \end{proof}
\end{proposition}

The endpoint cases satisfy $\mathcal F_{\mathrm{FO}}=\mathcal F_{\mathrm{ZO}}$ for $\overline{V}\in\{0,1\}$. Indeed, for $\overline{V}=0$, the Voigt volume constraint \eqref{eq:fmo_voigt_volume} forces $v_i^+=0$ for all $i$, hence \eqref{eq:fmo_voigt_fo} reduces to $\Emin\preceq\Ei\preceq\Emin$, i.e., $\Ei=\Emin$, which is exactly what \eqref{eq:fmo_volume} enforces in the zeroth-order formulation. For $\overline{V}=1$, the volume constraints are inactive in both models, and any zeroth-order-feasible tensor field also satisfies the Voigt bounds by taking $v_i^+=1$ for all $i$.

For the Voigt-constrained problem \eqref{eq:fmo_voigt}, we now show that isotropic optimal minimizers exist in both the discrete and continuum settings.

\begin{proposition}[Existence of isotropic minimizers for V-FMO (discrete and continuum)]\label{prop:voigt_iso}
Let $\Emin$ and $\Emax$ be well-ordered isotropic elasticity tensors with $\Emin \preceq \Emax$. Then:
\begin{enumerate}
    \item The discrete V-FMO problem \eqref{eq:fmo_voigt} admits an optimal solution in which each elasticity tensor saturates the upper bound in \eqref{eq:fmo_voigt_fo}, i.e.,
    \[
        \Ei^\star = \EVs := (1 - v_i^{+\star}) \Emin + v_i^{+\star} \Emax, \quad \text{for all } i = 1,\dots,\nel.
    \]
    In particular, there always exists an optimal discrete solution consisting entirely of isotropic tensors.
    \item In the continuum setting of Section~\ref{sec:cont_fmo}, if $(\tensf E,v^+)$ satisfies the Voigt pointwise admissibility $\tensf E(\tens x)\in \mathcal A^{(1)}(v^+(\tens x))$ for a.e.\ $\tens x\in\Omega$, with $\mathcal A^{(1)}$ from \eqref{eq:A1_def}, and
    \[
        \widetilde{\tensf E}(\tens x):=(1-v^+(\tens x))\Emin+v^+(\tens x)\Emax,
    \]
    then $(\widetilde{\tensf E},v^+)$ is Voigt-admissible and $J(\widetilde{\tensf E})\le J(\tensf E)$. In particular, one may always choose an isotropic minimizer in the continuum Voigt-constrained model.
\end{enumerate}
\begin{proof}
For part~1, existence of a discrete minimizer for \eqref{eq:fmo_voigt} follows from finite-dimensional compactness: by \eqref{eq:fmo_voigt_fo} and $0\le v_i^+\le1$, all $\Ei$ are uniformly bounded between $\Emin$ and $\Emax$, and for fixed $(\{\Ei\},\{v_i^+\})$ the minimal admissible $c_j$ is given by the Schur-complement value $\sevek{f}_j^\trn \semtrx K_j(\{\Ei\})^{-1}\sevek{f}_j$. Let $\left(\{\Ei^\star\},\, \{c_j^\star\},\, \{v_i^{+\star}\}\right)$ be such an optimal solution, and suppose that for some index $i$ we have $\Ei^\star \neq \EVs$. Since $\Ei^\star \preceq \EVs$, there exists a symmetric positive semidefinite tensor $\tensf{F}_i \succeq 0$, $\tensf{F}_i \neq \tensf{0}$ such that $\EVs = \Ei^\star + \tensf{F}_i$.

Define now a modified design by replacing $\Ei^\star$ with $\tilde{\tensf{E}}_i := \EVs$ while keeping all other variables unchanged. The constraints \eqref{eq:fmo_voigt_fo} and \eqref{eq:fmo_voigt_volume} remain satisfied by construction. Since the element stiffness contributions are linear in $\Ei$, the assembled matrix $\semtrx K_j$ is linear in $\{\Ei\}$. Therefore, $\tilde{\tensf{E}}_i\succeq \Ei^\star$ implies $\semtrx K_j(\{\tilde{\tensf{E}}_i\})\succeq \semtrx K_j(\{\Ei^\star\})$ and, thus,
\begin{equation}
\begin{pmatrix}
    c_j^\star & -\sevek{f}_j^\trn \\
    -\sevek{f}_j & \semtrx{K}_j(\{\tilde{\tensf{E}}_i\})
\end{pmatrix}
\succeq
\begin{pmatrix}
    c_j^\star & -\sevek{f}_j^\trn \\
    -\sevek{f}_j & \semtrx{K}_j(\{\Ei^\star\})
\end{pmatrix}
\succeq 0.
\end{equation}
By the Schur complement lemma, this implies $c_j^\star \ge \sevek{f}_j^\trn \semtrx{K}_j(\{\tilde{\tensf{E}}_i\})^{-1} \sevek{f}_j$, so the compliance remains at most $c_j^\star$, and the updated design is feasible. Repeating this replacement for all $i$ such that $\Ei^\star \neq \EVs$ yields a fully isotropic design, in which all elasticity tensors lie on the Voigt upper bound.

For part~2, from $\tensf E(\tens x)\in \mathcal A^{(1)}(v^+(\tens x))$ and \eqref{eq:A1_def}, we have $\Emin\preceq \tensf E(\tens x)\preceq \widetilde{\tensf E}(\tens x)$ for a.e.\ $\tens x$, so $(\widetilde{\tensf E},v^+)$ is feasible. Moreover, for every loadcase $j$ and every $\tens u\in\mathcal U_0$, $a_{\widetilde{\tensf E}}(\tens u,\tens u)\ge a_{\tensf E}(\tens u,\tens u)$. Using the dual representation \eqref{eq:compliance_sup},
\begin{equation}
c_j(\widetilde{\tensf E})
=\sup_{\tens u\in\mathcal U_0}\big(2\ell_j(\tens u)-a_{\widetilde{\tensf E}}(\tens u,\tens u)\big)
\le
\sup_{\tens u\in\mathcal U_0}\big(2\ell_j(\tens u)-a_{\tensf E}(\tens u,\tens u)\big)
=c_j(\tensf E).
\end{equation}
Summing over $j=1,\dots,\nlc$ yields $J(\widetilde{\tensf E})\le J(\tensf E)$. Since existence of a continuum minimizer for the Voigt model is established in Section~\ref{sec:cont_fmo}, applying this isotropic replacement to any minimizer yields an isotropic minimizer.
\end{proof}
\end{proposition}

Using the discrete part of Proposition~\ref{prop:voigt_iso}, optimal elasticities can be parameterized directly by the Voigt mixture. Hence, we reparametrize the optimized elasticities as
\begin{equation}
    \Ei (v_i^+) := \Emin + (\Emax - \Emin) v_i^+
\end{equation}
so that the constraint \eqref{eq:fmo_voigt_fo} is always satisfied. This allows us to eliminate the tensor variables and parametrize the tensors only using the scalars $\{v_i^+\}$. Then, the reduced version of the free material optimization problem under Voigt upper bounds reads as
\begin{subequations}\label{eq:fmo_voigtVTS}
\begin{align}
    \min_{\{c_j\}, \{v_i^+\}} \; & \sum_{j=1}^{\nlc} c_j \label{eq:fmo_voigtVTS_compliance}\\
    \mathrm{s.t.}\; & \begin{pmatrix}
        c_j & -\sevek{f}_j^\trn\\
        -\sevek{f}_j & \semtrx{K}_j\left(\{\Ei(v_i^+)\}\right)
    \end{pmatrix}\succeq 0, \quad j = 1,\dots,\nlc,\label{eq:fmo_voigtVTS_lmi}\\
    & \frac{1}{\nel}\sum_{i=1}^{\nel} v_i^+ \le \overline{V},\label{eq:fmo_voigtVTS_volume}\\
    & 0 \le v_i^+ \le 1, \forall i \in 1,\dots,\nel.
\end{align}
\end{subequations}
This corresponds to the standard variable-thickness-sheet problem \citep{Rossow1973} as also observed by \citet{Bendsoe1999}. We refer to \eqref{eq:fmo_voigtVTS} as the reduced (equivalent) V-FMO formulation.

\subsection{FMO problem under Hashin--Shtrikman upper bounds}\label{sec:fmo_hashin}

Although the Voigt upper bound presented in the previous section tightened the space of elasticity tensors $\Ei$, it can be further sharpened by exploiting the second-order microstructure characteristics, provided by the Hashin--Shtrikman (upper) bounds. Nevertheless, there is an associated cost: loss of convexity of the optimization problem (Lemma \ref{lem:nonconvex}).

\subsubsection{Finite-dimensional semi-infinite formulation}

The Hashin--Shtrikman upper bounds on elastic energy take the form
\begin{equation}\label{eq:hashin_shtrikman}
    \forall \strain \in \mathbb{S}^{2\times2}: \langle \Ei \strain, \strain \rangle \le f^\mathrm{HS}(\strain;v_i^+),\qquad f^\mathrm{HS}(\strain;v_i^+):=\langle [(1-v_i^{+})\Emin+v_i^{+}\Emax]\,\strain,\strain\rangle
- q(\strain;v_i^+),
\end{equation}
where $\strain$ is the strain tensor and $q(\strain;v_i^+)$ is a nonnegative scalar (which we show in Proposition \ref{prop:q_ge_0}) that serves as a correction from the Voigt upper bound \eqref{eq:voigt_bound}: by setting $q(\strain;v_i^+) = 0$, the inequality $\forall \strain \in \mathbb{S}^{2\times2}: \langle \Ei \strain, \strain \rangle \le \langle [(1-v_i^{+})\Emin+v_i^{+}\Emax]\,\strain,\strain\rangle$ is equivalent to $\Ei \preceq (1-v_i^{+})\Emin+v_i^{+}\Emax$. In the notation of \eqref{eq:hierarchy_bounds}, the Hashin--Shtrikman-type admissible set reads
\begin{equation}\label{eq:A2_def}
\mathcal A^{(2)}(v)
:=\Big\{\tensf E\in\mathbb{S}^{2\times2\times2\times2}:\ \Emin\preceq \tensf E,\ 
\langle \tensf E \strain,\strain\rangle \le f^{\mathrm{HS}}(\strain;v),\ \ \forall\,\strain\in\mathbb{S}^{2\times2}\Big\}.
\end{equation}
Thus, \eqref{eq:fmo_hashin_so} is the elementwise constraint $\Ei\in \mathcal A^{(2)}(v_i^+)$.

In contrast to the Voigt upper bound, Hashin--Shtrikman upper bounds are tight in the sense that, for each strain $\strain$, there exists a sequentially laminated composite whose effective tensor attains equality in \eqref{eq:hashin_shtrikman} \citep{Allaire1993wo,Allaire1993}. The corresponding optimization FMO problem then writes as
\begin{subequations}\label{eq:fmo_hashin}
\begin{align}
    \min_{\{c_j\}, \{\tensf{E}_{i}\}, \{v_i^+\}} \; & \sum_{j=1}^{\nlc} c_j \label{eq:fmo_hashin_compliance}\\
    \mathrm{s.t.}\; & \begin{pmatrix}
        c_j & -\sevek{f}_j^\trn\\
        -\sevek{f}_j & \semtrx{K}_j\left(\{\Ei\}\right)
    \end{pmatrix}\succeq 0, \quad j = 1,\dots,\nlc,\label{eq:fmo_hashin_lmi}\\
    & \Emin \preceq \Ei,\quad i = 1,\dots,\nel,\\
    & \langle \Ei \strain, \strain \rangle \le \big\langle \big[(1-v_i^+)\Emin+v_i^+\,\Emax\big]\strain,\strain\big\rangle - q(\strain;v_i^+)\quad i = 1,\dots,\nel, \forall\,\strain\in\mathbb{S}^{2\times2},\label{eq:fmo_hashin_so}\\
    & \frac{1}{\nel}\sum_{i=1}^{\nel} v_i^+ \le \overline{V},\label{eq:fmo_hashin_volume}\\
    & 0 \le v_i^+ \le 1, \forall i \in 1,\dots,\nel.
\end{align}
\end{subequations}

Define now $t = \lvert \Tr(\strain) \rvert$ and $s = \sqrt{\Tr(\strain)^2 - 4 \Det(\strain)} = \sqrt{(\varepsilon_{11} - \varepsilon_{22})^2 + 4\varepsilon_{12}^2}$. In addition, let $\Delta \kappa = \kappa^+ - \kappa^- > 0$ and $\Delta \mu = \mu^+ - \mu^- > 0$ be the contrast of the bulk and shear moduli of the isotropic constituents. Then, based on the result of \citet[Proposition 2.3]{Allaire1993}, for anisotropic composites made of well-ordered isotropic constituents, $q$ has the explicit form:
\begin{equation}\label{eq:q}
q(\strain;v_i^+) = 
\begin{cases}
    \begin{aligned}[t]
        &q_1 (\strain; v_i^+):= \frac{v_i^{+}(1-v_i^{+}) \left(\Delta\kappa t - \Delta\mu s \right)^{2}}{\kappa^+ + \mu^+ -(\Delta \kappa + \Delta \mu)v_i^+}\\
        &\qquad\text{if }\; v_i^{+}t \Delta \kappa \le (\kappa^+ + \mu^+ - \Delta \kappa v_i^+) s 
        \;\text{ and }\; v_i^{+} s \Delta \mu \le (\kappa^+ + \mu^+ - \Delta \mu v_i^+) t,
    \end{aligned}\\
    \begin{aligned}[t]
        q_2 (\strain; v_i^+):= (1-v_i^+)\left[\frac{v_i^+ t^2 \Delta \kappa^2}{\kappa^+ + \mu^+ - \Delta\kappa v_i^+} - s^2\Delta\mu\right]
        \;\text{ if }\; v_i^{+}t \Delta \kappa > (\kappa^+ + \mu^+ - \Delta \kappa v_i^+) s,
    \end{aligned}\\
    \begin{aligned}[t]
        q_3 (\strain; v_i^+):= \left(1-{v_i^+}\right)\left[\frac{{v_i^+} s^2 {\Delta \mu}^2}{\kappa^+ + \mu^+ - \Delta \mu v_i^+}-t^2 \Delta\kappa\right] 
        \;\text{ if }\; v_i^{+} s \Delta \mu > (\kappa^{+} + \mu^{+} - \Delta \mu v_i^+) t.
    \end{aligned}
\end{cases}
\end{equation}
The correction term $q(\strain;v_i^+)$ in \eqref{eq:q} is given by three explicit formulas chosen via two inequalities. These cases form a consistent partition (there is no overlap where both strict inequalities hold), and the expressions coincide on the branch boundaries (in particular $q_1=q_2$ on $\Gamma_{12}$ and $q_1=q_3$ on $\Gamma_{13}$), so that $q$ (and hence $f^{\mathrm{HS}}$) is continuous across branch transitions; see Proposition~\ref{prop:hs_continuous} in the Appendix.

With $q(\strain;v_i^+)$ defined in \eqref{eq:q}, \eqref{eq:hashin_shtrikman} is invariant under scalar scaling of the strain tensor. Indeed, for any $\alpha\neq 0$ we have $\langle \Ei(\alpha\strain),\alpha\strain\rangle=\alpha^2\langle \Ei\strain,\strain\rangle$, $\lvert\Tr(\alpha\strain)\rvert=\lvert\alpha\rvert\,\lvert\Tr(\strain)\rvert$, and $\Det(\alpha\strain)=\alpha^2\Det(\strain)$. Since the Hashin--Shtrikman envelope $f^{\mathrm{HS}}(\strain;v)$ is positively $2$-homogeneous in $\strain$, both sides of \eqref{eq:hashin_shtrikman} scale quadratically with $\alpha$ and the inequality is equivalent for all $\alpha\neq 0$. Hence, it suffices to enforce \eqref{eq:hashin_shtrikman} on any fixed sphere in $\mathbb{S}^{2\times2}$. We choose the normalization $\lVert \strain \rVert_F=\sqrt{2}/2$, and introduce the invariants 
\begin{equation}\label{eq:ts_invariants}
t:=\lvert\Tr(\strain)\rvert,\qquad
s:=\sqrt{\Tr(\strain)^2-4\Det(\strain)}
     =\sqrt{(\varepsilon_{11}-\varepsilon_{22})^2+4\varepsilon_{12}^2}.
\end{equation}
For $\lVert \strain \rVert_F=\sqrt{2}/2$ these satisfy $t\in[0,1]$, $s\in[0,1]$, and the identity $t^2+s^2= 2\lVert\strain \rVert_F^2 = 1$. Consequently, we may eliminate $s=\sqrt{1-t^2}$ and parameterize the constraint solely by $t\in[0,1]$
\begin{equation}\label{eq:qt}
q(t;v_i^+) =
\begin{cases}
    \begin{aligned}[t]
        &q_1(t;v_i^+) := 
        \frac{v_i^{+}(1-v_i^{+}) \left(\Delta\kappa t - \Delta\mu \sqrt{1-t^2} \right)^{2}}{\kappa^+ + \mu^+ -(\Delta \kappa + \Delta \mu)v_i^+}\\
        &\qquad\text{if }\; v_i^{+}t \Delta \kappa \le (\kappa^+ + \mu^+ - \Delta \kappa v_i^+) \sqrt{1-t^2}
        \;\text{ and }\; v_i^{+} \Delta \mu \sqrt{1-t^2} \le (\kappa^+ + \mu^+ - \Delta \mu v_i^+) t,
    \end{aligned}\\
    \begin{aligned}[t]
        &q_2(t;v_i^+) :=  \left(1-{v_i^+}\right)\left[\frac{v_i^+ {t}^2 \Delta\kappa^2}{\kappa^+  + \mu^+ - \Delta \kappa v_i^+}-(1-t^2) \Delta \mu \right]\\
        &\qquad\text{if }\; v_i^{+}t \Delta \kappa > (\kappa^+ + \mu^+ - \Delta \kappa v_i^+) \sqrt{1-t^2},
    \end{aligned}\\
    \begin{aligned}[t]
        &q_3(t;v_i^+) :=
        \left(1-{v_i^+}\right)\left[\frac{{v_i^+} {\Delta \mu}^2 (1-t^2)}{\kappa^+ + \mu^+ - \Delta \mu v_i^+}-t^2 \Delta\kappa\right]\\
        &\qquad\text{ if }\; v_i^{+} \Delta \mu \sqrt{1-t^2} > (\kappa^{+} + \mu^{+} - \Delta \mu v_i^+) t,
    \end{aligned}
\end{cases}
\end{equation}
which shows that the correction term $q$ depends only on $t=\lvert\Tr(\strain)\rvert$ and the volume fraction $v_i^+$. In contrast, the full Hashin--Shtrikman upper bound \eqref{eq:hashin_shtrikman} still depends on the whole strain tensor $\strain$ through the Voigt part $\langle[(1-v_i^+)\Emin+v_i^+\Emax]\strain,\strain\rangle$. At this stage, the $t$-parameterization is used only for the correction term; the corresponding one-parameter reduction of the activating-volume map is obtained later for orthotropic tensors in Section~\ref{sec:fmo_ortho} (Proposition~\ref{prop:trace_only_reduction}).

Using this form, we now prove that $q(t;v_i^+) \ge 0$ for all $v_i^+ \in [0,1]$ and $\strain$, which shows that the Hashin--Shtrikman upper bound \eqref{eq:hashin_shtrikman} is tighter than the Voigt upper bound \eqref{eq:voigt_bound} (see also Figure~\ref{fig:voigt_hashin} for a graphical illustration).

\begin{figure}[!t]
    \includegraphics[width=\linewidth]{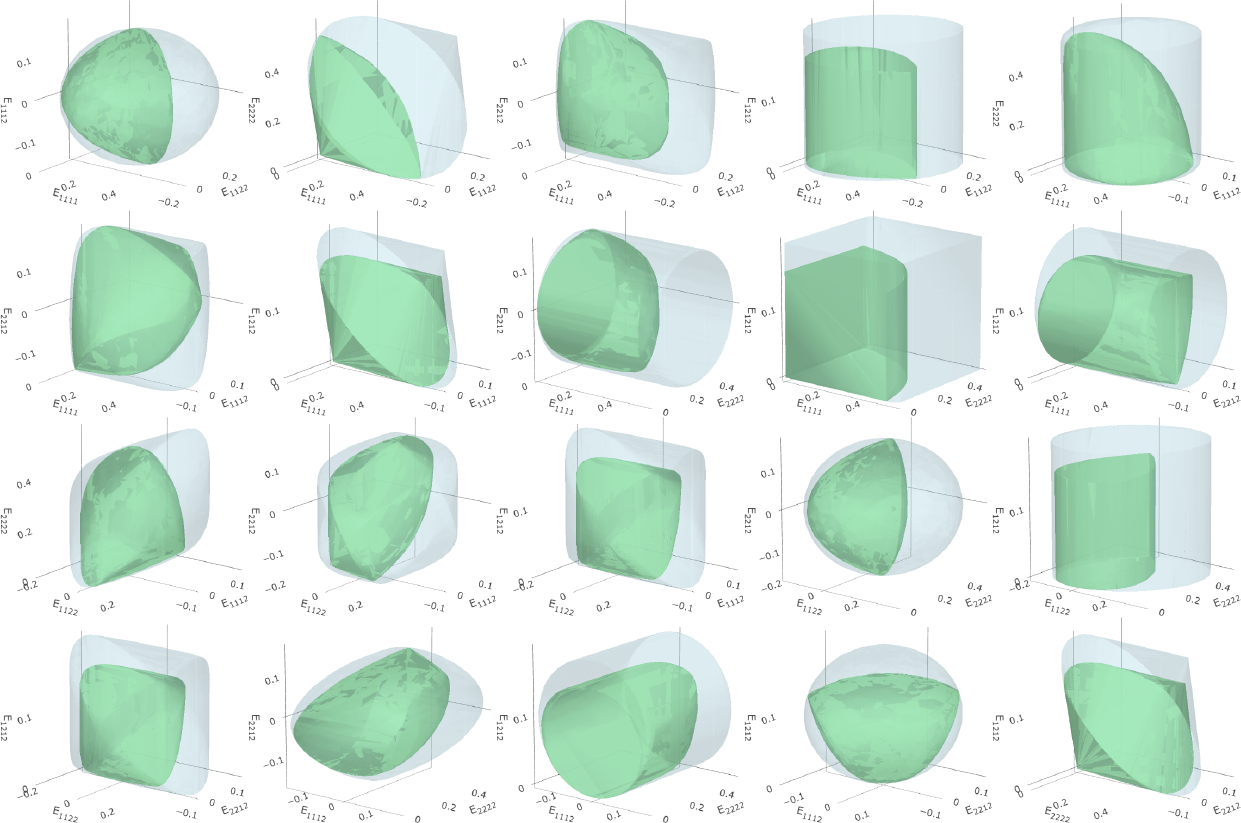}
    \caption{
        Comparison of the admissible sets $\mathcal A^{(2)}(0.5)$ (Hashin--Shtrikman, green) and $\mathcal A^{(1)}(0.5)$ (Voigt, blue). The sets are shown in a selected three-dimensional projection of $\Ei$ as outer-envelope boundary surfaces, obtained from $750$ sampled strains $\strain$ with Frobenius norm $\sqrt{2}/2$. Material parameters: $\kappa^- = 0.714\times10^{-9}$, $\kappa^+ = 0.714$, $\mu^- = 0.385\times10^{-9}$, and $\mu^+ = 0.385$.
    }
    \label{fig:voigt_hashin}
\end{figure}

\begin{proposition}[Nonnegativity and equality cases of the HS correction term]\label{prop:q_ge_0}
    For the Hashin--Shtrikman upper bounds \eqref{eq:hashin_shtrikman} with $q(t;v_i^+)$ defined in \eqref{eq:qt}, it holds for all $v_i^+ \in [0,1]$ and $\strain \in \mathbb{S}^{2\times2}$ with the Frobenius norm $\sqrt{2}/2$ and $t = \lvert \Tr(\strain)\rvert$ that $q(t;v_i^+) \ge 0$. Moreover, $q(t;v_i^+) = 0$ if and only if $v_i^+ \in \{0,1\}$ or $\Delta \kappa t = \Delta \mu \sqrt{1-t^2}$.
\end{proposition}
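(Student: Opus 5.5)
The proof goes branch by branch through \eqref{eq:qt}. Write $v:=v_i^+\in[0,1]$, $S:=\sqrt{1-t^2}$, and $A:=\kappa^++\mu^+$. Since $\Delta\kappa,\Delta\mu>0$ and $\kappa^-,\mu^->0$, every denominator is strictly positive on $[0,1]$:
\[
A-(\Delta\kappa+\Delta\mu)v \ \ge\ \kappa^-+\mu^->0,
\qquad
A-\Delta\kappa v \ \ge\ \kappa^-+\mu^+>0,
\qquad
A-\Delta\mu v \ \ge\ \kappa^++\mu^->0 .
\]

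\textbf{Branch $q_1$.} Here $q_1$ is the product of $v(1-v)\ge 0$, a square, and the reciprocal of a positive denominator, so $q_1\ge0$. It vanishes if and only if $v\in\{0,1\}$ or $\Delta\kappa t=\Delta\mu S$.

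\textbf{Branch $q_2$.} This branch is only active when $v\Delta\kappa t>(A-\Delta\kappa v)S$, which forces $v>0$. Multiply the branch inequality by $v\Delta\kappa t/(A-\Delta\kappa v)$; this is $\ge0$, and the product stays strict when $t>0$. Note $t>0$ holds, since $t=0$ would give $0>(A-\Delta\kappa v)\cdot 1>0$. This yields
\[
\frac{v\Delta\kappa^2t^2}{A-\Delta\kappa v} \;>\; \frac{v\Delta\kappa\, t\,(A-\Delta\kappa v)S}{A-\Delta\kappa v} \;=\; v\Delta\kappa t S \;>\; \frac{(A-\Delta\kappa v)S^2}{1}\,\frac{1}{1}\cdot\frac{1}{1}.
\]
The cleaner route is to square the branch inequality: $v^2\Delta\kappa^2t^2>(A-\Delta\kappa v)^2S^2$, so the bracket in $q_2$ satisfies
\[
\frac{v\Delta\kappa^2t^2}{A-\Delta\kappa v} \;>\; \frac{(A-\Delta\kappa v)S^2}{v}.
\]
Hence the bracket is $>S^2\bigl[(A-\Delta\kappa v)/v-\Delta\mu\bigr]$. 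The key comparison is $A-\Delta\kappa v\ge v\Delta\mu$, which is equivalent to $\kappa^++\mu^+\ge v(\Delta\kappa+\Delta\mu)$ and holds because $\kappa^++\mu^+\ge\Delta\kappa+\Delta\mu$. So the bracket is strictly positive whenever $S>0$. If $S=0$, the bracket equals $v\Delta\kappa^2/(A-\Delta\kappa v)>0$. Multiplying by $(1-v)$ gives $q_2\ge0$, with equality only at $v=1$.

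\textbf{Branch $q_3$.} The argument is symmetric, with the roles of $(\kappa,t)$ and $(\mu,S)$ swapped and the comparison $A-\Delta\mu v\ge v\Delta\kappa$. It gives $q_3\ge0$, with equality only at $v=1$.

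\textbf{Equality cases.} To assemble the characterization, check that on branches $2$ and $3$ neither $v=0$ (the branch is inactive there) nor $\Delta\kappa t=\Delta\mu S$ can occur. For the latter, on branch $2$ substitute $\Delta\kappa t=\Delta\mu S$ into the branch inequality: it becomes $v\Delta\mu S>(A-\Delta\kappa v)S$, i.e.\ $v(\Delta\kappa+\Delta\mu)>A$, which is impossible. Branch $3$ is handled the same way. Therefore zeros of $q$ off branch $1$ occur only at $v=1$, and at $v=1$ every branch gives $q=0$. Together with the $q_1$ analysis this yields exactly the stated condition: $q=0$ if and only if $v\in\{0,1\}$ or $\Delta\kappa t=\Delta\mu\sqrt{1-t^2}$.

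\textbf{Main obstacle.} The delicate step is the strict sign of the brackets in $q_2$ and $q_3$. The plan is to square the branch condition and compare with $\Delta\mu$ (respectively $\Delta\kappa$) using $\kappa^++\mu^+\ge v(\Delta\kappa+\Delta\mu)$. The degenerate cases $t=0$ or $S=0$ have to be checked separately. Continuity across the branches (Proposition~\ref{prop:hs_continuous}) is not needed for this argument.
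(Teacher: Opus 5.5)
Your proof is correct and follows essentially the paper's route. The paper also argues branch by branch, controls the $q_2$ and $q_3$ brackets by squaring the activation inequality and closing with $\kappa^++\mu^+\ge v(\Delta\kappa+\Delta\mu)$, and checks that strains with $\Delta\kappa t=\Delta\mu\sqrt{1-t^2}$ fall in the first branch; the only difference is that the paper inserts the squared bound into the negative term while you insert it into the positive one, and both give the same factor $\kappa^++\mu^+-(\Delta\kappa+\Delta\mu)v$. Delete the abandoned first display in your $q_2$ paragraph: multiplying as you describe gives $v^2\Delta\kappa^2t^2/(A-\Delta\kappa v)$ rather than the left-hand side you wrote, and the trailing $\tfrac{1}{1}$ factors are meaningless, whereas the squared-condition argument that replaces it is complete.
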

\begin{proof}
    We prove the statement by first analyzing the boundary values $v_i^+ \in \{0,1\}$ and then showing positivity or nonnegativity of each branch in \eqref{eq:qt} for $v_i^+ \in (0,1)$.

    Consider now $v_i^+ \in \{0,1\}$. For $v_i^+ = 0$, the conditions of the first branch are satisfied for all $t\in [0,1]$ and hence for all strains $\strain$, because the left-hand side of the inequalities are zero and the right-hand sides nonnegative. Using arbitrary $t$, $q(t;0) = q_1(t;0) = 0$ and, therefore, the Voigt upper bound is tight for $v_i^+ = 0$.  For $v_i^+ = 1$, all three branches in \eqref{eq:qt} evaluate as zero due to the $(1-v_i^+)$ prefactor. Hence, $q(t;1) = 0$, so the Voigt upper bound is also tight in this case.

    Let us further proceed with the branches of $q(t;v_i^+)$ for $v_i^+ \in (0,1)$. The first branch in \eqref{eq:qt} is nonnegative due to $v_i^+ \in (0,1)$, $0\le \mu^- <\mu^+$, $0 \le \kappa^- < \kappa^+$, hence $v_i^+ \Delta \kappa < \kappa^+$ and $v_i^+ \Delta \mu < \mu^+$. Moreover, $q_1(t;v_i^+)=0$ if and only if $\Delta\kappa t=\Delta\mu \sqrt{1-t^2}$, i.e., $t=\Delta \mu /\sqrt{\Delta\mu^2 + \Delta\kappa^2}$. Under this equality, the two branch conditions reduce to $\kappa^+ + \mu^+ - v_i^+ (\Delta\kappa + \Delta \mu) \ge 0$, which holds for all $v_i^+ \in (0,1)$, so the related strains are indeed in this branch. For any other strains $\mathcal{E}:=\{\strain \in \mathbb{S}^{2\times2}: \Tr(\strain)\neq\Delta \mu /\sqrt{\Delta\mu^2 + \Delta\kappa^2}\}$ satisfying the branch conditions (for example, set $t=1/\sqrt{1 + (v_i^+\Delta\kappa/d_2)^2}$), $q_1(t;v_i^+)>0$ for all $v_i^+ \in (0,1)$.
    
    For the second branch in \eqref{eq:qt}, the denominator $d_2 := \kappa^+ + \mu^+ - \Delta \kappa v_i^+$ is positive due to ordered materials, so we only need to show positivity of
    \begin{equation}\label{eq:q_case2}
        (1-v_i^+)\left[v_i^+ t^2 \Delta \kappa^2 - \Delta\mu d_2 (1-t^2)\right] \quad \text{for}\quad  v_i^+ t \Delta\kappa \ge d_2 \sqrt{1-t^2}.
    \end{equation}
    Since both sides are nonnegative, squaring preserves inequality and yields $(v_i^+)^2 t^2\Delta\kappa^2 > d_2^2 (1-t^2)$, hence $d_2 (1-t^2) < (v_i^+)^2 t^2\Delta\kappa^2/d_2$. Substituting this upper bound into the negative term in \eqref{eq:q_case2} gives the lower bound
    \begin{equation}\label{eq:q_case2_final}
        (1-v_i^+) v_i^+ {t}^2\Delta \kappa^2 \frac{\kappa^+ + \mu^+ - (\Delta\kappa + \Delta\mu)v_i^+}{d_2} > 0,
    \end{equation}
    which is positive due to $v_i^+ \in (0,1)$ and because $t^2 = 0$ violates the branch condition.

    For the third case in \eqref{eq:qt}, the denominator $d_3 = \kappa^{+} + \mu^{+} - \Delta \mu v_i^+$ is again positive, so we only need to show positivity of
    \begin{equation}\label{eq:q_case3}
        (1-v_i^+) \left[ v_i^+ \Delta \mu^2 (1-t^2) - \Delta\kappa d_3 t^2 \right] \quad \text{for} \quad v_i^+ \Delta \mu \sqrt{1-t^2} > d_3 t.
    \end{equation}
    Both sides of the right inequality are nonnegative, so it is after squaring equivalent to $(v_i^+)^2 (1-t^2) \Delta\mu^2/d_3 > d_3 t^2$. Inserting this upper bound into the negative term in \eqref{eq:q_case3}, we obtain
    \begin{equation}\label{eq:q_case3_final}
        (1-v_i^+){v_i^+}(1-t^2) \Delta\mu^2 \frac{\kappa^+ + \mu^+ - (\Delta\kappa + \Delta\mu)v_i^+}{d_3} > 0,
    \end{equation}
    which is also positive for $v_i^+ \in (0,1)$ as $t^2 = 1$ violates the branch condition.
\end{proof}

\subsubsection{Geometric properties of the Hashin--Shtrikman feasible set}

For any fixed $v_i^+$ and strain sample $\strain$, the Hashin--Shtrikman constraint \eqref{eq:hashin_shtrikman} is affine in the entries of $\Ei$, hence the feasible set in $\Ei$ is convex (see Figure~\ref{fig:voigt_hashin}); with an outer approximation by a finite set of sampled strains it becomes a polyhedron. However, \eqref{eq:hashin_shtrikman} is nonconvex in $(v_i^+, \Ei)$, which we show in the following lemma.

\begin{lemma}[Nonconvexity of the HS-feasible set]\label{lem:nonconvex}
    Let $0< \kappa^- < \kappa^+$ and $0<\mu^- < \mu^+$. Then, the Hashin--Shtrikman upper bound \eqref{eq:hashin_shtrikman} with $q(t;v_i^+)$ defined in \eqref{eq:qt} and $t=\abstrstrain$ is nonconvex in $(v_i^+, \Ei)$.
\end{lemma}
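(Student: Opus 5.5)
The plan is to show that the joint feasible set $\{(v,\tensf E):\ \langle\tensf E\strain,\strain\rangle\le f^{\mathrm{HS}}(\strain;v)\ \forall\strain\}$ is not convex. We exhibit two feasible points whose midpoint violates the constraint for one suitably chosen strain.

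\emph{Step 1: choose the endpoints and the strain.} Take $(0,\Emin)$ and $(1,\Emax)$. Both are feasible, since Proposition~\ref{prop:q_ge_0} gives $q(\cdot;0)=q(\cdot;1)=0$, and at these endpoints the constraint reduces to $\Emin\preceq\Emin$ and $\Emax\preceq\Emax$. Their midpoint is $(\tfrac12,\tfrac12(\Emin+\Emax))$. This midpoint equals the Voigt tensor at $v=\tfrac12$, so the HS inequality at the midpoint reads $\langle \tensf E^{\mathrm V}(\tfrac12)\strain,\strain\rangle\le\langle \tensf E^{\mathrm V}(\tfrac12)\strain,\strain\rangle-q(t;\tfrac12)$. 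This fails exactly when $q(t;\tfrac12)>0$.

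\emph{Step 2: find a strain with $q>0$.} By Proposition~\ref{prop:q_ge_0}, for $v=\tfrac12\in(0,1)$ we have $q(t;\tfrac12)>0$ for every normalized strain with $\Delta\kappa t\neq\Delta\mu\sqrt{1-t^2}$. For instance, take the purely deviatoric strain $t=0$, $s=1$, e.g. $\strain=\mathrm{Diag}(\tfrac12,-\tfrac12)$, or the purely volumetric strain $t=1$. In both cases $\Delta\kappa t\neq\Delta\mu\sqrt{1-t^2}$, because $\Delta\kappa,\Delta\mu>0$. Hence $q>0$, the midpoint violates \eqref{eq:hashin_shtrikman}, and the feasible set is nonconvex.

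\emph{Step 3: make the nonconvexity visible as a function.} Equivalently, one can phrase this through the constraint function $g(v,\tensf E):=\langle\tensf E\strain,\strain\rangle-f^{\mathrm{HS}}(\strain;v)$ for the fixed $\strain$ above. It satisfies $g=0$ at both endpoints and $g>0$ at the midpoint. So its $0$-sublevel set is not convex, and $g$ itself is not convex, since $v\mapsto q(t;v)$ is not concave.

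\emph{Main obstacle.} The only delicate point is applying Proposition~\ref{prop:q_ge_0} correctly at $v=\tfrac12$. We must confirm which branch of \eqref{eq:qt} is active for the chosen $t$. If the endpoint branch bookkeeping is awkward, we can instead evaluate $q_1$ directly at a $t$ that clearly satisfies the branch-1 conditions, e.g. $t$ close to the root $\Delta\mu/\sqrt{\Delta\mu^2+\Delta\kappa^2}$ but not equal to it. There the conditions hold strictly, by continuity from the equality case in the proof of Proposition~\ref{prop:q_ge_0}, and $q_1>0$.
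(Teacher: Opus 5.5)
Your proof is correct and is essentially the paper's argument. The paper uses a general convex combination with $\theta\in(0,1)$ rather than $\theta=\tfrac12$, and your explicit strains $t=0$ (branch $q_3$) and $t=1$ (branch $q_2$) give $q(t;\tfrac12)>0$ directly, which is cleaner than the paper's appeal to generic strains.

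There is one slip in the optional Step~3. Since $g$ is affine in $(v,\tensf E)$ plus $q(t;v)$, $g$ fails to be convex because $v\mapsto q(t;v)$ is not \emph{convex}: it vanishes at $v=0,1$ and is positive in between, and for $t=0$ it is in fact concave. So ``not concave'' is the wrong justification.
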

\begin{proof}
    Consider the feasible pairs $(\Emin,0)$ and $(\Emax,1)$. By Proposition \ref{prop:q_ge_0}, for any strain $\strain$ (and thus $t$) we have $q(t;0)=q(t;1)=0$, so \eqref{eq:hashin_shtrikman} holds with equality at both endpoints. Fix any $\theta\in(0,1)$ and set $\widehat{v}:=\theta$ and $\widehat{\tensf{E}}:=(1-\theta)\Emin+\theta\,\Emax$. Evaluating \eqref{eq:hashin_shtrikman} at $(\widehat{v},\widehat{\tensf{E}})$ gives, for any strain $\strain$,
    \begin{equation}
    \big\langle \widehat{\tensf{E}}\,\strain,\strain\big\rangle
    \;\le\;
    \big\langle \big[(1-\widehat{v})\Emin+\widehat{v}\,\Emax\big]\strain,\strain\big\rangle
    - q(\abstrstrain;\widehat{v}).
    \end{equation}
    Since $\widehat{\tensf{E}}=(1-\widehat{v})\Emin+\widehat{v}\,\Emax$, the two quadratic terms are identical and the inequality reduces to $0 \;\le\; -\,q(\strain;\widehat{v})$. However, for any interior $\widehat{v}\in(0,1)$ the strains $\hat{\strain} \in \{\strain \in \mathbb{S}^{2\times2}: \Tr(\strain)\neq\Delta \mu /\sqrt{\Delta\mu^2 + \Delta\kappa^2}\}$ result in $q(\lvert\Tr(\hat{\strain})\rvert;\widehat{v})>0$. Hence, the inequality fails for such a strain, and $(\widehat{v},\widehat{\tensf{E}})$ is infeasible. Therefore, although $(0,\Emin)$ and $(1,\Emax)$ are feasible, every interior convex combination $\big((1-\theta) \Emin + \theta\Emax,\theta\big)$ with $\theta\in(0,1)$ is infeasible. The feasible set in $(v_i^+,\Ei)$ is thus nonconvex.
\end{proof}

The nonconvexity of the Hashin--Shtrikman bounds in $(v_i^+, \Ei)$ is illustrated in Figure~\ref{fig:voigt_hashin_vol}, which shows the feasible sets for varying $v_i^+$. The figure also suggests that, when combined with the zeroth-order lower bound $\Emin \preceq \Ei$, the Voigt relation $\Ei \preceq \EV$ describes the closed convex hull of the Hashin--Shtrikman feasible set in $(v_i^+,\Ei)$, which we prove next.

\begin{lemma}[Convex hull of the HS-feasible set]\label{lem:HS_convex_hull}
Define the feasible sets
$\mathcal F_{\mathrm{HS}}
:=\{(v,\tensf E)\in[0,1]\times\mathbb{S}^{2\times2\times2\times2}:\ \tensf E\in\mathcal A^{(2)}(v)\},
$
and
$
\mathcal F_{\mathrm V}
:=\{(v,\tensf E)\in[0,1]\times\mathbb{S}^{2\times2\times2\times2}:\ \tensf E\in\mathcal A^{(1)}(v)\},
$
Then, taking the convex hull in the product space $[0,1]\times\mathbb{S}^{2\times2\times2\times2}$ (with coordinates $(v,\tensf E)$), we have
$
\Conv(\mathcal F_{\mathrm{HS}})=\mathcal F_{\mathrm V}.
$
\end{lemma}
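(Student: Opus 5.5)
We prove the two inclusions separately.

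\emph{First inclusion, $\Conv(\mathcal F_{\mathrm{HS}})\subseteq\mathcal F_{\mathrm V}$.} By Proposition~\ref{prop:q_ge_0} we have $q\ge 0$. Hence every $\tensf E\in\mathcal A^{(2)}(v)$ satisfies $\langle\tensf E\strain,\strain\rangle\le\langle[(1-v)\Emin+v\Emax]\strain,\strain\rangle$ for all $\strain$, and therefore $\mathcal F_{\mathrm{HS}}\subseteq\mathcal F_{\mathrm V}$. The set $\mathcal F_{\mathrm V}$ is jointly convex in $(v,\tensf E)$: it is cut out by the linear matrix inequalities $\Emin\preceq\tensf E$ and $\tensf E\preceq(1-v)\Emin+v\Emax$, which are affine in $(v,\tensf E)$, together with $v\in[0,1]$. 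The inclusion of convex hulls follows.

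\emph{Second inclusion, $\mathcal F_{\mathrm V}\subseteq\Conv(\mathcal F_{\mathrm{HS}})$.} Take $(v,\tensf E)\in\mathcal F_{\mathrm V}$. We write it explicitly as a convex combination of points of $\mathcal F_{\mathrm{HS}}$, using only the endpoint slices $v\in\{0,1\}$. By Proposition~\ref{prop:q_ge_0}, $q(\cdot;0)=q(\cdot;1)=0$, so
\[
\mathcal A^{(2)}(0)=\{\Emin\},\qquad \mathcal A^{(2)}(1)=\{\tensf F:\Emin\preceq\tensf F\preceq\Emax\}.
\]
Thus both $(0,\Emin)$ and $(1,\tensf F)$ lie in $\mathcal F_{\mathrm{HS}}$ for any $\tensf F$ with $\Emin\preceq\tensf F\preceq\Emax$.

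If $v=0$, the Voigt constraint forces $\tensf E=\Emin$, and the point already lies in $\mathcal F_{\mathrm{HS}}$. For $v\in(0,1]$ we set
\[
\tensf F:=\Emin+\frac{1}{v}\bigl(\tensf E-\Emin\bigr),
\]
so that $(v,\tensf E)=(1-v)(0,\Emin)+v(1,\tensf F)$.

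It remains to check that $\tensf F$ is admissible at $v=1$.
\begin{itemize}
\item From $\tensf E\succeq\Emin$ we get $\tensf F\succeq\Emin$.
\item From $\tensf E-\Emin\preceq v(\Emax-\Emin)$ we get $\tensf F\preceq\Emax$.
\end{itemize}
Hence $(1,\tensf F)\in\mathcal F_{\mathrm{HS}}$, and $(v,\tensf E)$ is a convex combination of two HS-feasible points. This completes the proof.

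\emph{Remarks.} The only point requiring care is identifying the endpoint slices of $\mathcal A^{(2)}$, and Proposition~\ref{prop:q_ge_0} gives these directly. The decomposition also shows that the convex hull is already closed, with no closure operation needed. As a cross-check, the nonconvexity in Lemma~\ref{lem:nonconvex} is consistent with this result: the segment from $(0,\Emin)$ to $(1,\Emax)$ lies in $\mathcal F_{\mathrm V}$ but, except at its endpoints, outside $\mathcal F_{\mathrm{HS}}$.
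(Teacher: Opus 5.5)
Your proof is correct and follows the paper's argument essentially step for step. You get $\Conv(\mathcal F_{\mathrm{HS}})\subseteq\mathcal F_{\mathrm V}$ from $q\ge 0$ and the joint convexity of $\mathcal F_{\mathrm V}$, and the reverse inclusion from the same endpoint decomposition $(v,\tensf E)=(1-v)(0,\Emin)+v(1,\tensf F)$ with $\tensf F=\Emin+\frac{1}{v}(\tensf E-\Emin)$, including the same separate treatment of $v=0$. Writing out the slices $\mathcal A^{(2)}(0)=\{\Emin\}$ and $\mathcal A^{(2)}(1)=\{\tensf F:\Emin\preceq\tensf F\preceq\Emax\}$ explicitly is a slightly cleaner packaging of what the paper uses implicitly through its endpoint identities.
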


\begin{proof}
By definition of $\mathcal A^{(2)}(v)$ we have $\Emin\preceq \tensf E$ for all $(v,\tensf E)\in\mathcal F_{\mathrm{HS}}$. Moreover, since $q\ge 0$ (cf.\ Proposition~\ref{prop:q_ge_0}), we obtain
\begin{equation}\label{eq:fHS_le_V}
f^{\mathrm{HS}}(\strain;v)=\langle \tensf E^{\mathrm V}(v)\strain,\strain\rangle-q(\strain;v)
\le \langle \tensf E^{\mathrm V}(v)\strain,\strain\rangle
\qquad \forall \strain,\ \forall v\in[0,1].
\end{equation}
Hence, every $(v,\tensf E)\in\mathcal F_{\mathrm{HS}}$ satisfies $\tensf E\preceq \tensf E^{\mathrm V}(v)$, i.e., $\mathcal F_{\mathrm{HS}}\subseteq \mathcal F_{\mathrm V}$. Since $\mathcal F_{\mathrm V}$ is convex, this implies that $\Conv(\mathcal F_{\mathrm{HS}})\subseteq \mathcal F_{\mathrm V}$.

For the reverse inclusion, let $(v,\tensf E)\in\mathcal F_{\mathrm V}$ be arbitrary. If $v=0$, then $\tensf E=\Emin$ and $(0,\Emin)\in\mathcal F_{\mathrm{HS}}$ (cf.\ the endpoint identity $f^{\mathrm{HS}}(\cdot;0)=\langle\Emin\cdot,\cdot\rangle$, e.g.\ Proposition~\ref{prop:q_ge_0}). Assume $v\in(0,1]$ and define
\begin{equation}\label{eq:E1_def}
\tensf E_1:=\Emin+\frac{1}{v}\bigl(\tensf E-\Emin\bigr).
\end{equation}
From $\tensf E\preceq \tensf E^{\mathrm V}(v)=\Emin+v(\Emax-\Emin)$ we infer $\tensf E_1\preceq \Emax$ and clearly $\Emin\preceq \tensf E_1$. Using the endpoint identity $f^{\mathrm{HS}}(\cdot;1)=\langle\Emax\cdot,\cdot\rangle$ (Proposition~\ref{prop:q_ge_0}), it follows that $(1,\tensf E_1)\in\mathcal F_{\mathrm{HS}}$. Consequently,
\begin{equation}\label{eq:conv_representation}
(v,\tensf E)=(1-v)\,(0,\Emin)+v\,(1,\tensf E_1)\in \Conv(\mathcal F_{\mathrm{HS}}),
\end{equation}
which shows $\mathcal F_{\mathrm V}\subseteq \Conv(\mathcal F_{\mathrm{HS}})$. Together with $\mathcal F_{\mathrm V}\supseteq \Conv(\mathcal F_{\mathrm{HS}})$ this yields $\Conv(\mathcal F_{\mathrm{HS}})=\mathcal F_{\mathrm V}$.
\end{proof}

\begin{figure}[!t]
    \includegraphics[width=\linewidth]{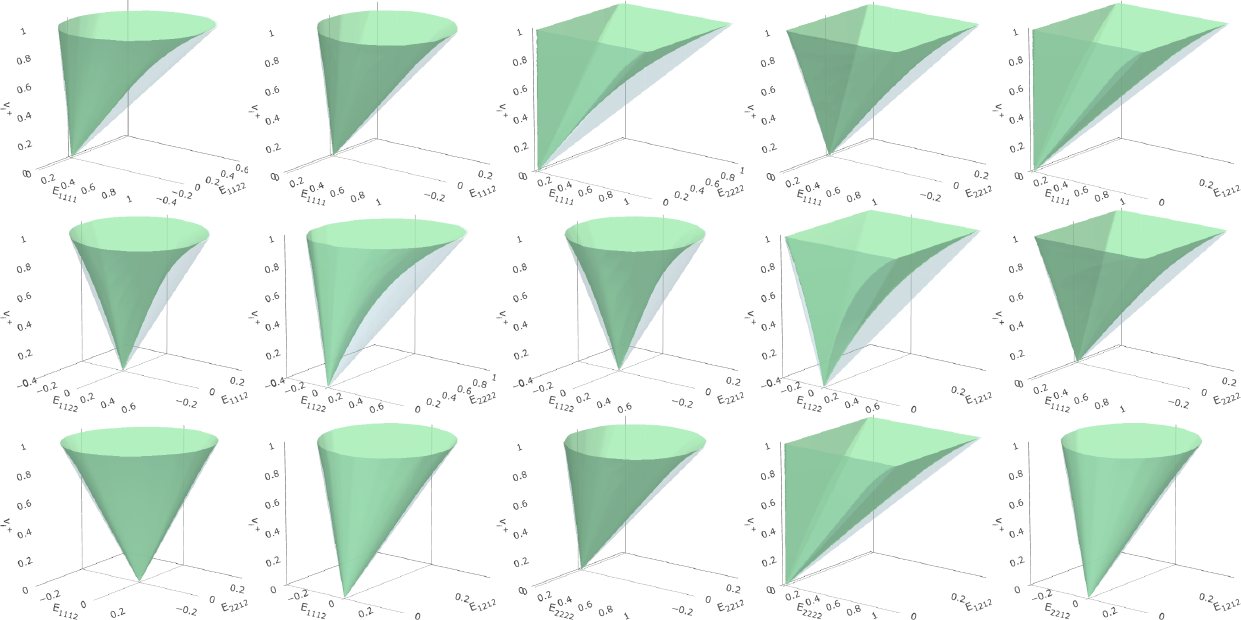}
    \caption{
        Comparison of the product-space feasible sets $\mathcal F_{\mathrm{HS}}$ (green) and $\mathcal F_{\mathrm V}$ (blue), defined in Lemma~\ref{lem:HS_convex_hull}. The figure shows projections in $(\Ei, v_i^+)$ for all pairs of selected components of $\Ei$ together with $v_i^+$. Each plotted set is shown by its boundary surface, obtained by sampling $v_i^+$ and taking the outer hull of feasible points, using $250$ sampled strains $\strain$ with Frobenius norm $\sqrt{2}/2$. Material parameters: $\kappa^- = 0.714\times10^{-9}$, $\kappa^+ = 0.714$, $\mu^- = 0.385\times10^{-9}$, and $\mu^+ = 0.385$.
    }
    \label{fig:voigt_hashin_vol}
\end{figure}

\subsubsection{Eliminating the volume fractions}

As in the Voigt case, we show that the Hashin--Shtrikman constraint is active at optimality, which allows us to reformulate \eqref{eq:fmo_hashin} by eliminating $v_i^+$. By Propositions~\ref{prop:hs_continuous} and \ref{prop:hs_monotone} in the Appendix, for every fixed $\strain$ the map $v\mapsto f^{\mathrm{HS}}(\strain;v)$ is continuous and nondecreasing. Hence, for fixed $(\Ei,\strain)$ the feasibility set in $v_i^+$ is an interval,
\begin{equation}\label{eq:vi_interval}
\{v_i^+\in[0,1]:\ \langle \Ei\strain,\strain\rangle \le f^{\mathrm{HS}}(\strain;v_i^+)\}=[\hat{v}_i(\strain;\Ei),1],
\end{equation}
where $\hat{v}_i(\strain;\Ei)\in[0,1]$ denotes the minimal admissible volume fraction (equivalently, the smallest $v$ such that the inequality holds; when $\langle \Ei\strain,\strain\rangle$ lies on the graph this means $\langle \Ei\strain,\strain\rangle=f^{\mathrm{HS}}(\strain;\hat{v}_i(\strain;\Ei))$). Imposing the Hashin--Shtrikman inequality for all normalized strains is, therefore, equivalent to
\begin{equation}\label{eq:vi_plus_sup}
v_i^+ \ge \overline{v}_i(\Ei):=\sup_{\strain\in\mathbb{S}^{2\times2}:\ \lVert \strain \rVert_F=\sqrt{2}/2}\hat{v}_i(\strain;\Ei).
\end{equation}
\begin{lemma}[Elementwise activity of the HS constraint at optimality]\label{lem:HS_active}
There exists an optimal solution of \eqref{eq:fmo_hashin} such that, for every element $i$, $v_i^+ = \overline{v}_i(\Ei)=\inf\{v\in[0,1]:\ \Ei\in\mathcal A^{(2)}(v)\}$.
\end{lemma}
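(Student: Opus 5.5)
The plan is to take an arbitrary optimal solution of \eqref{eq:fmo_hashin} and reduce each $v_i^+$ to the smallest admissible value $\overline v_i(\Ei)$, leaving all $\Ei$ and $c_j$ unchanged. Existence of an optimal solution comes first. In the discrete problem all variables lie in a bounded set, since $\Emin\preceq\Ei\preceq\EV\preceq\Emax$ by \eqref{eq:fHS_le_V}. Closedness of the feasible set follows from continuity of $f^{\mathrm{HS}}(\strain;v)$ in $(\strain,v)$ (Proposition~\ref{prop:hs_continuous} together with 2-homogeneity in $\strain$), because the constraint set is an intersection over $\strain$ of closed sets. As in Proposition~\ref{prop:voigt_iso}, for fixed tensors the optimal $c_j$ equals the Schur-complement value $\sevek f_j^\trn\semtrx K_j^{-1}\sevek f_j$, which is continuous on $\Ei\succeq\Emin$. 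A minimizer therefore exists; call it $(\{c_j^\star\},\{\Ei^\star\},\{v_i^{+\star}\})$.

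Next I identify $\overline v_i(\Ei)$ with the infimum stated in the lemma. By \eqref{eq:vi_interval}, monotonicity of $v\mapsto f^{\mathrm{HS}}(\strain;v)$ (Proposition~\ref{prop:hs_monotone}) makes the set $\{v:\Ei\in\mathcal A^{(2)}(v)\}$ equal to $\bigcap_{\strain}[\hat v_i(\strain;\Ei),1]=[\overline v_i(\Ei),1]$. Continuity in $v$ ensures that each interval is closed, so the intersection is closed and the infimum is attained, giving $\Ei\in\mathcal A^{(2)}(\overline v_i(\Ei))$. The intersection is nonempty because $v_i^{+\star}$ belongs to it. This identification also gives $\overline v_i(\Ei^\star)\le v_i^{+\star}$.

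Finally I define the modified solution by $\tilde v_i:=\overline v_i(\Ei^\star)$, keeping $\Ei^\star$ and $c_j^\star$. The constraint \eqref{eq:fmo_hashin_lmi} and $\Emin\preceq\Ei$ are unchanged, and \eqref{eq:fmo_hashin_so} holds by the attainment just shown. Bounds $0\le\tilde v_i\le1$ hold trivially, and the volume constraint \eqref{eq:fmo_hashin_volume} only gets looser because $\tilde v_i\le v_i^{+\star}$. The objective is identical, so the modified point is optimal and satisfies $v_i^+=\overline v_i(\Ei)$ for every $i$.

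The main obstacle is the closedness and attainment step, which requires continuity of $f^{\mathrm{HS}}$ jointly across the branch switches of $q$ and up to the endpoints $v\in\{0,1\}$. I would rely on Proposition~\ref{prop:hs_continuous} for this. If joint continuity in $\strain$ needs justification, I would note that every branch of $q$ and every branch condition is continuous in $(t,v)$ on the compact set $t\in[0,1]$, $v\in[0,1]$.
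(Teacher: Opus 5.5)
Your proposal is correct and follows the paper's argument: fix an optimal solution, keep $\Ei^\star$ and $c_j^\star$ unchanged, and lower each $v_i^+$ to the minimal admissible value $\overline v_i(\Ei^\star)$; this preserves every constraint and the objective. Your extra steps are sound and make explicit what the paper condenses into ``by monotonicity and continuity'': existence of a discrete minimizer, which the paper takes for granted, and attainment of the infimum, i.e.\ $\Ei^\star\in\mathcal A^{(2)}(\overline v_i(\Ei^\star))$. For closedness, continuity of $f^{\mathrm{HS}}$ in $v$ for each fixed $\strain$ already suffices, so your joint-continuity remark is not needed.
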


\begin{proof}
Let $(c_j^\star,\Ei^\star,v_i^{+\star})$ be any optimal solution. By monotonicity (Proposition \ref{prop:hs_monotone}) and continuity (Proposition \ref{prop:hs_continuous}), for each $i$ we can decrease $v_i^{+\star}$ down to
\begin{equation}
\widetilde v_i^+ := \inf\{v_i^+ \in [0,1]:\ \langle \Ei^\star \strain, \strain\rangle \le f^{\mathrm{HS}}(\strain;v_i^+), \forall \strain\}
= \overline{v}_i(\Ei^\star),
\end{equation}
without violating \eqref{eq:fmo_hashin_so}. This operation leaves $\Ei^\star$ unchanged, so all matrices $\semtrx{K}_j (\{\Ei\})$ and the objective $\sum_{j=1}^{\nlc} c_j$ remain the same. It only reduces the left-hand side of the volume constraint \eqref{eq:fmo_hashin_volume}, so feasibility is preserved. Hence, $\left(c_j^\star,\Ei^\star,\widetilde v_i^+\right)$ is also optimal, with each $\widetilde v_i^+$ equal to the minimal admissible value for fixed $\Ei^\star$.
\end{proof}

In particular, activity here is with respect to the design variable $v_i^+$ for fixed $\Ei$: $v_i^+$ is reduced to its minimal admissible value, not interpreted as equality in \eqref{eq:hashin_shtrikman} for every strain.
It remains to compute $\hat{v}(\strain;\Ei)$ explicitly. The following result provides closed-form expressions by solving the equality $\langle\Ei\strain,\strain\rangle = f^{\mathrm{HS}}(\strain;v_i^+)$ for each branch of~\eqref{eq:qt}.
\begin{proposition}[Closed-form minimal admissible volume for HS activity]\label{prop:explicit_volume}
    For any fixed $\Ei \in \{\tensf{E}: \Emin \preceq \tensf{E} \preceq \Emax\}$ and strain $\strain \in \mathbb{S}^{2\times2}$ with the Frobenius norm $\sqrt{2}/2$, the minimal $v_i^+ \in [0,1]$ such that the Hashin--Shtrikman upper bound \eqref{eq:hashin_shtrikman} with $q(t;v_i^+)$ defined in \eqref{eq:qt} is active, i.e., $\langle \Ei \strain, \strain\rangle = f^{\mathrm{HS}}(\strain;v_i^+)$, is given by
    \begin{equation}
        \hat{v}(\strain;\Ei) =
    \begin{cases}
        \begin{aligned}[t]
            &\hat{v}_1 (\strain;\Ei) :=
            \frac{1}{2} + \frac{\langle\Ei\strain, \strain\rangle(\Delta\mu + \Delta\kappa) + (2t^2-1)(\Delta\kappa \mu^+ -\Delta\mu \kappa^+) - \sqrt{D}}{2 \Delta\mu \Delta\kappa (t + \sqrt{1 - t^2})^2}\\
            &\qquad \text{if } \hat{v}_1 \le \frac{\kappa^++\mu^+}{t + \sqrt{1-t^2}} \min\left\{ \frac{\sqrt{1-t^2}}{\Delta\kappa}, \frac{t}{\Delta \mu} \right\}
        \end{aligned}\\
        \begin{aligned}[t]
            \hat{v}_2 (\strain;\Ei) := \frac{\kappa^+ + \mu^+}{\Delta\kappa}\left[1 - \frac{t^2 (\kappa^- + \mu^+)}{ \langle\Ei\strain, \strain\rangle - \mu^+ (1 - 2t^2)}\right] \;\text{ if }\; \hat{v}_2 > \frac{(\kappa^+ + \mu^+)\sqrt{1-t^2}}{\Delta\kappa(t + \sqrt{1-t^2})}
        \end{aligned}\\
        \begin{aligned}[t]
            \hat{v}_3 (\strain;\Ei) := \frac{\kappa^+ + \mu^+}{\Delta\mu} \left[1 - \frac{(1-t^2) (\kappa^+ + \mu^-)}{\langle\Ei\strain, \strain\rangle + \kappa^+ (1 - 2t^2)}\right] \;\text{ if }\; \hat{v}_3 > \frac{(\kappa^+ + \mu^+)t}{\Delta\mu(t + \sqrt{1-t^2})}
        \end{aligned}
    \end{cases}
    \end{equation}
    with
    \begin{equation}\label{eq:det}
        \begin{multlined}[t]
        D = \left[\langle\Ei\strain, \strain\rangle (\Delta\mu + \Delta\kappa) + (1 - 2t^2) (\Delta\mu \kappa^+ - \Delta\kappa\mu^+) + \Delta\mu \Delta\kappa (\sqrt{1-t^2} + t)^2\right]^2\\
         - 4 \Delta\mu \Delta\kappa (\kappa^+ + \mu^+) (\sqrt{1-t^2} + t)^2 (\langle\Ei\strain, \strain\rangle + (t^2-1)\mu^- - t^2\kappa^-).
        \end{multlined}
    \end{equation}
\end{proposition}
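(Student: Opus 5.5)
We solve, for fixed $\Ei$ and a normalized strain $\strain$ with $t=\abstrstrain$ and $s=\sqrt{1-t^2}$, the scalar equation $e:=\langle\Ei\strain,\strain\rangle=f^{\mathrm{HS}}(\strain;v)$ in $v$, branch by branch of \eqref{eq:qt}. The root found in each branch is then checked against that branch's validity condition. First, we rewrite the Voigt part in invariants. For isotropic $\tensf E$ with $\|\strain\|_F^2=1/2$ we have
\[
\langle\tensf E\strain,\strain\rangle=\kappa t^2+\mu s^2,
\]
so $\langle\EV\strain,\strain\rangle=(\kappa^-+v\Delta\kappa)t^2+(\mu^-+v\Delta\mu)(1-t^2)$. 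By Proposition~\ref{prop:hs_monotone} and Proposition~\ref{prop:hs_continuous}, $v\mapsto f^{\mathrm{HS}}(\strain;v)$ is continuous and nondecreasing. It runs from $\langle\Emin\strain,\strain\rangle$ to $\langle\Emax\strain,\strain\rangle$, and $e$ lies in this range because $\Emin\preceq\Ei\preceq\Emax$. Hence a minimal root $\hat v\in[0,1]$ exists, and it suffices to identify it within whichever branch contains it.

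\emph{Branches 2 and 3.} Multiplying $q_2$ out, the term $-(1-v)(1-t^2)\Delta\mu$ cancels the shear part of the Voigt term up to a constant. This leaves
\[
e=\kappa^- t^2+\mu^+(1-t^2)+v t^2\Delta\kappa-\frac{(1-v)v t^2\Delta\kappa^2}{d_2},\qquad d_2=\kappa^++\mu^+-\Delta\kappa v .
\]
Combining over $d_2$, the numerator becomes linear in $v$ after using $\kappa^++\mu^+-d_2=v\Delta\kappa$. The equation therefore reduces to the Möbius form $e-\mu^+(1-2t^2)=t^2(\kappa^-+\mu^+)(\kappa^++\mu^+)/d_2$. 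Solving for $d_2$ and then for $v$ gives $\hat v_2$. Branch~3 is symmetric under $(\kappa,t)\leftrightarrow(\mu,s)$ and yields $\hat v_3$. For the validity conditions, the branch-2 condition $v t\Delta\kappa>(\kappa^++\mu^+-\Delta\kappa v)s$ is linear in $v$ and rearranges to $v>(\kappa^++\mu^+)s/(\Delta\kappa(t+s))$, which is exactly the stated test. Branch~3 is analogous.

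\emph{Branch 1.} Here $q_1$ has denominator $d_1=\kappa^++\mu^+-(\Delta\kappa+\Delta\mu)v$, which is linear. Multiplying $e=\langle\EV\strain,\strain\rangle-q_1$ by $d_1$ gives a quadratic $a v^2+bv+c=0$. Its leading coefficient collects $-(\Delta\kappa+\Delta\mu)(\Delta\kappa t^2+\Delta\mu s^2)+(\Delta\kappa t-\Delta\mu s)^2=-\Delta\kappa\Delta\mu(t+s)^2$, which produces the stated denominator $2\Delta\mu\Delta\kappa(t+s)^2$. The remaining coefficients simplify using $t^2+s^2=1$ (so $s^2-t^2=1-2t^2$) to the linear term in $e$ appearing in $\hat v_1$ and to the discriminant $D$ in \eqref{eq:det}. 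The validity conditions of branch~1 are the conjunction of the two linear inequalities $v\le(\kappa^++\mu^+)s/(\Delta\kappa(t+s))$ and $v\le(\kappa^++\mu^+)t/(\Delta\mu(t+s))$, i.e., the stated min condition.

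\emph{Main obstacle.} The hardest step is choosing the correct root of the quadratic, namely the minus sign before $\sqrt D$, and showing $D\ge0$ on branch~1. I would first argue via monotonicity. On branch~1, $f^{\mathrm{HS}}-e$ is nondecreasing in $v$ and $d_1>0$ there, so $d_1(f^{\mathrm{HS}}-e)$, a concave quadratic because $a<0$, changes sign from nonpositive to nonnegative across the root. The relevant root is therefore the smaller one, which with $a<0$ corresponds to the stated expression. Reality of the roots follows because a root exists in the branch (by the intermediate value argument and continuity across the branch boundaries $\Gamma_{12},\Gamma_{13}$). Finally, uniqueness of the minimal root across branches follows from the consistent partition in Proposition~\ref{prop:hs_continuous}: exactly one branch's validity test is satisfied by its own candidate root, except on boundaries, where the candidates coincide.
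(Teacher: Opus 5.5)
Your derivation has the same structure as the paper's proof. Both solve $\langle\Ei\strain,\strain\rangle=f^{\mathrm{HS}}(\strain;v)$ branch by branch. Branches 2--3 reduce to a linear (Möbius) equation, and branch~1 to a quadratic numerator with leading coefficient $-\Delta\kappa\Delta\mu(t+s)^2$. The branch tests are rewritten as $v\le\min\{v_{12}(t),v_{13}(t)\}$, $v>v_{12}(t)$ and $v>v_{13}(t)$, where $v_{12}(t)=(\kappa^++\mu^+)s/(\Delta\kappa(t+s))$ and $v_{13}(t)=(\kappa^++\mu^+)t/(\Delta\mu(t+s))$.

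You differ from the paper in how the root is chosen. The paper evaluates both roots at $\Ei=\Emin$ (the minus root gives $0$, the plus root exceeds $1$). It then invokes strict monotonicity to carry this choice over to all $\Ei\in[\Emin,\Emax]$, leaving implicit why the root labels cannot switch as $e=\langle\Ei\strain,\strain\rangle$ varies. Your argument identifies $\hat v_1$ directly and is the cleaner of the two. On the branch-1 region the numerator $N(v)=d_1(v)\bigl(f^{\mathrm{HS}}(\strain;v)-e\bigr)$ is negative before the minimal root and nonnegative after it, and a concave quadratic can only cross upward at its smaller root. This needs only nondecreasing monotonicity plus minimality, and the endpoint cases $\hat v=0$ and $\hat v=\min\{v_{12},v_{13}\}$ need only a one-line check.

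You can also get $D\ge0$ for every admissible $\Ei$, not only when $\hat v$ lies in branch~1. Note that $N(0)=(\kappa^++\mu^+)\bigl(\langle\Emin\strain,\strain\rangle-e\bigr)\le0$ and $N(1)=(\kappa^-+\mu^-)\bigl(\langle\Emax\strain,\strain\rangle-e\bigr)\ge0$. So the smaller root always lies in $[0,1]$, and the case-1 test can always be evaluated.

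Your final sentence, however, is false as stated: it is not true that exactly one branch test is passed by its own candidate. Take $\Ei=\Emin$ and small $t>0$. The denominator of $\hat v_2$ is $t^2(\kappa^-+\mu^+)-(1-t^2)\Delta\mu<0$, so $\hat v_2>(\kappa^++\mu^+)/\Delta\kappa>v_{12}(t)$ and the case-2 test passes. Yet the true value is $\hat v=\hat v_1=0$.

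The repair is to accept a candidate only if it passes its branch test and also lies in $[0,1]$. Such a candidate sits in the region where its branch formula coincides with $f^{\mathrm{HS}}$, so it is a genuine root. It then equals $\hat v$ by the uniqueness in Corollary~\ref{cor:uniqueness}, i.e.\ by strict monotonicity, not by the branch partition itself. The paper's proof does not address this cross-branch consistency at all. With this fix, your proposal is complete.
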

\begin{proof}
Define, for $t \in [0,1]$,
\begin{equation}
    v_{12} (t) = \frac{\kappa^+ + \mu^+}{\Delta\kappa} \frac{\sqrt{1-t^2}}{t + \sqrt{1-t^2}}, \quad v_{13} (t) = \frac{\kappa^+ + \mu^+}{\Delta\mu} \frac{t}{t + \sqrt{1-t^2}}.
\end{equation}
Then the branch conditions in \eqref{eq:qt} are equivalently $v_i^+ \le \min\{v_{12}(t), v_{13}(t)\}$ in the first branch, $v_i^+ > v_{12}(t)$ in the second branch, and $v_i^+ > v_{13}(t)$ in the third branch.

For the first branch in \eqref{eq:qt}, after unification to a common denominator and grouping powers of $v_i^+$, the Hashin--Shtrikman equality reads as
\begin{multline}
    0 = \frac{(v_i^+)^2 (-\Delta\mu \Delta\kappa (\sqrt{1-t^2} + t)^2) - [(\kappa^+ + \mu^+)(\langle\Ei\strain, \strain\rangle + \Delta\mu (1-t^2) + \Delta\kappa t^2 - \kappa^+ t^2 - \mu^+(1-t^2))]}{\kappa^+ + \mu^+ - (\Delta\kappa + \Delta\mu) v_i^+}\\
    + \frac{v_i^+\left[\langle\Ei\strain, \strain\rangle (\Delta\mu + \Delta\kappa) + (1 - 2t^2) (\Delta\mu \kappa^+ - \Delta\kappa\mu^+) + \Delta\mu \Delta\kappa (\sqrt{1-t^2} + t)^2\right]}{\kappa^+ + \mu^+ - (\Delta\kappa + \Delta\mu) v_i^+}.
\end{multline}
Thus, we search for the roots of the quadratic polynomial in the numerator, which can be expressed as
\begin{equation}\label{eq:quad}
    \frac{1}{2} + \frac{\langle\Ei\strain, \strain\rangle(\Delta\mu + \Delta\kappa) + (2t^2-1)(\Delta\kappa \mu^+ -\Delta\mu \kappa^+) \pm \sqrt{D}}{2 \Delta\mu \Delta\kappa (t + \sqrt{1 - t^2})^2}
\end{equation}
with $D$ in \eqref{eq:det}. Due to uniqueness of the activating volume fraction (Corollary \ref{cor:uniqueness}), $D\ge 0$ but only one root can be admissible in $[0,1]$, so it remains to show which one. To this end, set $\Ei=\Emin$. Then, \eqref{eq:quad} simplifies to
\begin{multline}
    \frac{1}{2}+\frac{(t^2\Delta\kappa + (1-t^2)\Delta\mu)(\kappa^- + \mu^-) \pm [(t^2\Delta\kappa + (1 - t^2)\Delta\mu) (\kappa^+ + \mu^+) - ( \Delta\kappa t - \Delta\mu \sqrt{1-t^2})^2]}{2 \Delta\mu \Delta\kappa (t + \sqrt{1 - t^2})^2} = \\
    = \begin{cases}
        \begin{aligned}0\end{aligned}\\
        \begin{aligned}1 + \frac{(\Delta\mu (1-t^2) + \Delta\kappa t^2)(\kappa^- + \mu^-)}{\Delta\mu \Delta\kappa (t + \sqrt{1-t^2})^2 }.\end{aligned}
    \end{cases}
\end{multline}
Clearly, the ``negative'' root gives admissible $v_i^+ = 0$, whereas the ``positive'' root gives $v_i^+ > 1$. By strict monotonicity in $v_i^+$ (Proposition \ref{prop:hs_monotone}), the admissible root for any $\Ei \in [\Emin, \Emax]$ is, therefore, the ``negative'' root, i.e.,
\begin{equation}
    \hat{v}_1 (\strain;\Ei) =
        \frac{1}{2} + \frac{\langle\Ei\strain, \strain\rangle(\Delta\mu + \Delta\kappa) + (2t^2-1)(\Delta\kappa \mu^+ -\Delta\mu \kappa^+) - \sqrt{D}}{2 \Delta\mu \Delta\kappa (t + \sqrt{1 - t^2})^2}.
\end{equation}

Consider now the second branch in \eqref{eq:qt}. Rearranging the equality $\langle \Ei \strain, \strain\rangle = f^{\mathrm{HS}}(\strain;v_i^+)$ gives
\begin{equation}
    0 = \frac{\Delta\kappa (\mu^+ (2t^2 -1) + \langle\Ei\strain, \strain\rangle)v_i^+ +(\kappa^+ + \mu^+) (\kappa^- t^2 + \mu^+ (1-t^2) - \langle\Ei\strain, \strain\rangle)}{\kappa^+ + \mu^+ - \Delta\kappa v_i^+}.
\end{equation}
Equating the numerator gives the solution
\begin{equation}
    \hat{v}_2 (\strain;\Ei) = \frac{\kappa^+ + \mu^+}{\Delta\kappa}\left[1 - \frac{t^2 (\kappa^- + \mu^+)}{ \langle\Ei\strain, \strain\rangle - \mu^+ (1 - 2t^2)}\right].
\end{equation}

For the third branch in \eqref{eq:qt}, rearranging the equality $\langle \Ei \strain, \strain\rangle = f^{\mathrm{HS}}(\strain;v_i^+)$ gives
\begin{equation}
    0 = \frac{\Delta\mu (\kappa^+ (1-2t^2) + \langle\Ei\strain, \strain\rangle)v_i^+ + (\kappa^+ + \mu^+) (\kappa^+ t^2 + \mu^- (1-t^2) - \langle\Ei\strain, \strain\rangle) }{\kappa^+ + \mu^+ - \Delta\mu v_i^+},
\end{equation}
with the numerator providing
\begin{equation}
    \hat{v}_3 (\strain;\Ei) = \frac{\kappa^+ + \mu^+}{\Delta\mu} \left[1 - \frac{(1-t^2) (\kappa^+ + \mu^-)}{\langle\Ei\strain, \strain\rangle + \kappa^+ (1 - 2t^2)}\right].
\end{equation}
\end{proof}
By Lemma~\ref{lem:HS_active}, at optimality one may always take $v_i^+=\overline{v}_i(\Ei)$, with $\overline{v}_i(\Ei)$ defined in~\eqref{eq:vi_plus_sup} and $\hat{v}(\strain;\Ei)$ given in Proposition~\ref{prop:explicit_volume}. The free material optimization problem under Hashin--Shtrikman upper bounds \eqref{eq:fmo_hashin} can, therefore, be written as
\begin{subequations}\label{eq:fmo_hashin_reduced}
\begin{align}
    \min_{\{c_j\}, \{\Ei\}} \; & \sum_{j=1}^{\nlc} c_j \label{eq:fmo_hashin_reduced_compliance}\\
    \mathrm{s.t.}\; & \begin{pmatrix}
        c_j & -\sevek{f}_j^\trn\\
        -\sevek{f}_j & \semtrx{K}_j\left(\{\Ei\}\right)
    \end{pmatrix}\succeq 0, \quad j = 1,\dots,\nlc,\label{eq:fmo_hashin_reduced_lmi}\\
    & \Emin \preceq \Ei \preceq \Emax,\quad i = 1,\dots,\nel,\label{eq:fmo_hashin_reduced_bounds}\\
    & \frac{1}{\nel}\sum_{i=1}^{\nel} \sup_{\strain\in\mathbb{S}^{2\times2}:\ \lVert\strain\rVert_F = \sqrt{2}/2} \hat{v}(\strain;\Ei) \le \overline{V}\label{eq:fmo_hashin_reduced_volume}
\end{align}
\end{subequations}
in which \eqref{eq:fmo_hashin_reduced_bounds} guarantees that the volume given in Proposition \ref{prop:explicit_volume} is in the $[0,1]$ domain.

\subsubsection{Single-loadcase equivalence in the continuum setting}

The nonconvexity of the Hashin--Shtrikman upper bounds in $(v_i^+, \Ei)$ (Lemma \ref{lem:nonconvex}) carries over to the nonconvexity of \eqref{eq:fmo_hashin} and \eqref{eq:fmo_hashin_reduced}. Moreover, due to the nature of the volume function in Proposition \ref{prop:explicit_volume}, both these problems are nonsmooth (this can be seen by evaluating the derivatives at the branch boundaries, we refer to Proposition \ref{prop:hs_monotone} in the appendix for explicit formulas). Nevertheless, for the special case of a single-loadcase ($\nlc=1$), we can identify the continuum Hashin--Shtrikman/Allaire--Kohn (HS/AK) relaxation with the relaxed single-loadcase compliance problem of \citet{Allaire1993od}, which is attained (in the relaxation sense) by sequential laminates. This result is stated in the following theorem.

\begin{theorem}[Tightness of the HS/AK relaxation in the single-loadcase continuum setting]\label{th:tight}
Assume the standing continuum hypotheses of Section~\ref{sec:cont_fmo} (in particular, the regularity of $\Omega$, boundary conditions ensuring coercivity on $\mathcal U_0$, and a bounded linear load functional $\ell:=\ell_1$ from \eqref{eq:load_functional}), two-dimensional plane stress with well-ordered isotropic phases $\tensf{0}\prec\Emin\prec\Emax$, and a single-loadcase.
Define
\begin{equation}
\mathcal V_{\overline{V}}
:=
\left\{
v^+\in L^\infty(\Omega;[0,1])\;:\;
\int_\Omega v^+(\tens{x})\,\mathrm d\tens{x}\le \overline{V}\,\lvert\Omega\rvert
\right\}.
\end{equation}
Let $c_{\mathrm{HS\text{-}FMO}}^{\mathrm{cont}}$ be the infimum value of the continuum FMO problem \eqref{eq:fmo_continuum} from Section~\ref{sec:cont_fmo} (with $\nlc=1$), taken over all pairs $(v^+,\tensf E)\in \mathcal V_{\overline{V}}\times L^\infty(\Omega;\mathbb{S}^{2\times2\times2\times2})$ satisfying the pointwise admissibility $\tensf E(\tens{x})\in \mathcal A^{(2)}(v^+(\tens{x}))$ for a.e. $\tens{x}\in\Omega$, where $\mathcal A^{(2)}$ is given by \eqref{eq:A2_def}. We refer to this model as the continuum Hashin--Shtrikman-constrained free material optimization problem (HS-FMO). Define the Allaire--Kohn primal relaxed value by
\begin{equation}\label{eq:ak_primal_cont}
c_{\mathrm{AK,pr}}^{\mathrm{cont}}
:=
\inf_{v^+\in\mathcal V_{\overline{V}}}
\sup_{\tens{u}\in\mathcal U_0}
\left\{
2\,\ell(\tens{u})
-\int_\Omega f^{\mathrm{HS}}(\strain(\tens{u});v^+)\,\mathrm d\tens{x}
\right\}.
\end{equation}
Further, define the complementary-energy envelope
\begin{equation}\label{eq:gak_def}
f_{\mathrm{c}}^{\mathrm{HS}}(\stress;v)
:=
\sup_{\strain\in\mathbb{S}^{2\times2}}
\left\{
2\langle\stress,\strain\rangle-f^{\mathrm{HS}}(\strain;v)
\right\},
\end{equation}
the statically admissible stress set
\begin{equation}
\Sigma_{\mathrm{ad}}
:=
\left\{
\stress\in L^2(\Omega;\mathbb{S}^{2\times2})\;:\;
\int_\Omega \langle\stress,\strain(\tens{v})\rangle\,\mathrm d\tens{x}=\ell(\tens{v}),
\ \forall\,\tens{v}\in\mathcal U_0
\right\},
\end{equation}
and the dual relaxed value
\begin{equation}\label{eq:ak_dual_cont}
c_{\mathrm{AK,du}}^{\mathrm{cont}}
:=
\inf_{v^+\in\mathcal V_{\overline{V}}}
\inf_{\stress\in\Sigma_{\mathrm{ad}}}
\int_\Omega f_{\mathrm{c}}^{\mathrm{HS}}(\stress;v^+)\,\mathrm d\tens{x}.
\end{equation}
Then,
\begin{equation}
c_{\mathrm{HS\text{-}FMO}}^{\mathrm{cont}}
=
c_{\mathrm{AK,pr}}^{\mathrm{cont}}
=
c_{\mathrm{AK,du}}^{\mathrm{cont}}
=:
c_{\mathrm{AK}}^{\mathrm{cont}}.
\end{equation}
Moreover, the common value is attained in the relaxation sense by a sequence of orthotropic sequential laminates (of rank at most two) with vanishing microstructural length scale.
\end{theorem}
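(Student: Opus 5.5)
The proof proceeds as a chain of inequalities, each closed by standard tools. First we show $c_{\mathrm{HS\text{-}FMO}}^{\mathrm{cont}}\ge c_{\mathrm{AK,pr}}^{\mathrm{cont}}$. For any admissible pair $(v^+,\tensf E)$ with $\tensf E(\tens x)\in\mathcal A^{(2)}(v^+(\tens x))$, the pointwise inequality $\langle\tensf E\strain,\strain\rangle\le f^{\mathrm{HS}}(\strain;v^+)$ integrates to $a_{\tensf E}(\tens u,\tens u)\le\int_\Omega f^{\mathrm{HS}}(\strain(\tens u);v^+)\,\mathrm d\tens x$ for every $\tens u\in\mathcal U_0$. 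Inserting this into the dual representation \eqref{eq:compliance_sup} gives
\[
c_1(\tensf E)\ge\sup_{\tens u}\Big\{2\ell(\tens u)-\int_\Omega f^{\mathrm{HS}}(\strain(\tens u);v^+)\,\mathrm d\tens x\Big\}.
\]
Taking the infimum over admissible pairs yields the claimed inequality.

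Next we show that the primal and dual Allaire--Kohn values agree. For fixed $v^+$, the map $\strain\mapsto f^{\mathrm{HS}}(\strain;v)$ is convex. The reason is that it is the optimal energy over laminates, equivalently $\sup_{\tensf E\in\mathcal A^{(2)}(v)}\langle\tensf E\strain,\strain\rangle$ by the attainment result of \citet{Allaire1993wo,Allaire1993}. It is also 2-homogeneous, squeezed between $\langle\Emin\strain,\strain\rangle$ and $\langle\Emax\strain,\strain\rangle$, and continuous in $(\strain,v)$ by Proposition~\ref{prop:hs_continuous}. Standard convex duality (Fenchel--Rockafellar on $\mathcal U_0$ versus $\Sigma_{\mathrm{ad}}$, with a coercive quadratic-growth integrand and Korn's inequality as in \eqref{eq:a_coercivity}) then gives, for each fixed $v^+$,
\[
\sup_{\tens u}\Big\{2\ell(\tens u)-\int_\Omega f^{\mathrm{HS}}\Big\}=\min_{\stress\in\Sigma_{\mathrm{ad}}}\int_\Omega f^{\mathrm{HS}}_{\mathrm c}(\stress;v^+)\,\mathrm d\tens x,
\]
with no duality gap. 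Taking the infimum over $v^+$ gives $c_{\mathrm{AK,pr}}^{\mathrm{cont}}=c_{\mathrm{AK,du}}^{\mathrm{cont}}$.

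The reverse inequality $c_{\mathrm{HS\text{-}FMO}}^{\mathrm{cont}}\le c_{\mathrm{AK,du}}^{\mathrm{cont}}$ is the main step. Given $v^+$ and $\stress\in\Sigma_{\mathrm{ad}}$, we use the Allaire--Kohn result that for each $\stress(\tens x)$ there is a rank-$\le2$ sequential laminate with tensor $\tensf E^\star(\tens x)\in\mathcal A^{(2)}(v^+(\tens x))$ satisfying $\langle(\tensf E^\star)^{-1}\stress,\stress\rangle=f_{\mathrm c}^{\mathrm{HS}}(\stress;v^+)$. This laminate is aligned with the principal stress directions, hence orthotropic, and we choose it measurably in $\tens x$ using the explicit formulas. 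The complementary principle \eqref{eq:zo_complementary} then gives
\[
c_1(\tensf E^\star)\le\int_\Omega\langle(\tensf E^\star)^{-1}\stress,\stress\rangle=\int_\Omega f_{\mathrm c}^{\mathrm{HS}}(\stress;v^+).
\]
Taking infima closes the chain. The main obstacle is justifying the pointwise identity $f_{\mathrm c}^{\mathrm{HS}}(\stress;v)=\min_{\tensf E\in\mathcal A^{(2)}(v)}\langle\tensf E^{-1}\stress,\stress\rangle$, which amounts to a minimax exchange. We handle it by writing
\[
\sup_{\strain}\inf_{\tensf E\in\mathcal A^{(2)}(v)}\big\{2\langle\stress,\strain\rangle-\langle\tensf E\strain,\strain\rangle\big\},
\]
where $\mathcal A^{(2)}(v)$ is compact and convex and the integrand is concave in $\strain$ and affine in $\tensf E$. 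Sion's theorem then permits the swap, and the inner supremum over $\strain$ evaluates to $\langle\tensf E^{-1}\stress,\stress\rangle$. Measurable selection of the minimizer follows from continuity and a measurable-selection theorem.

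Finally, for the attainment statement, the pointwise optimal laminates at each $\tens x$ are realized by genuine two-phase microstructures of rank at most two. Gluing them by standard homogenization arguments (\citet{Allaire2002}) produces classical designs that $H$-converge to $\tensf E^\star$ with $v_k^+\rightharpoonup^\star v^+$, so their compliances converge. Using near-optimal $(v^+,\stress)$ in \eqref{eq:ak_dual_cont}, these designs attain the common value in the relaxation sense, consistent with the $H$-relaxed existence result of Section~\ref{sec:cont_fmo}.
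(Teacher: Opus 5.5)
Your argument is correct, and it takes a genuinely different route from the paper.

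\textbf{Comparison of routes.} The lower bound $c_{\mathrm{HS\text{-}FMO}}^{\mathrm{cont}}\ge c_{\mathrm{AK,pr}}^{\mathrm{cont}}$ is identical in both. The paper then obtains $c_{\mathrm{HS\text{-}FMO}}^{\mathrm{cont}}\le c_{\mathrm{AK,pr}}^{\mathrm{cont}}$ on the primal side. It invokes the Allaire--Kohn recovery sequence of sequential laminates for a near-optimal $v_\delta^+$. It identifies the primal and dual values by citing homogenization results: the minimum complementary energy principle, the local character of the $G$-closure, and the explicit complementary bound in Allaire's book.

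You instead prove primal $=$ dual by Fenchel--Rockafellar duality for the convex, quadratic-growth integrand $f^{\mathrm{HS}}(\cdot;v^+)$. You then close the chain through the dual value using three ingredients:
\begin{itemize}
\item the pointwise identity $f_{\mathrm c}^{\mathrm{HS}}(\stress;v)=\min_{\tensf E\in\mathcal A^{(2)}(v)}\langle\tensf E^{-1}\stress,\stress\rangle$, obtained from Sion's theorem on the compact convex set $\mathcal A^{(2)}(v)$;
\item a measurable selection of the minimizer;
\item the complementary energy principle.
\end{itemize}
This gives a self-contained convex-analytic proof of the three value identities. The only homogenization input is the pointwise tightness of $f^{\mathrm{HS}}$. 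You use it twice: for convexity in $\strain$, and for $\max_{\tensf E\in\mathcal A^{(2)}(v)}\langle\tensf E\strain,\strain\rangle=f^{\mathrm{HS}}(\strain;v)$ inside the Sion step. The paper's route delivers the laminate attainment together with the value inequality, but only by delegating both to the literature.

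\textbf{One point to tidy.} The Sion argument produces a minimizer in $\mathcal A^{(2)}(v)$, and that minimizer need not be a laminate. The Allaire--Kohn laminate minimizes the complementary energy only over $G_v\subseteq\mathcal A^{(2)}(v)$. So a priori its complementary energy is only $\ge f_{\mathrm c}^{\mathrm{HS}}(\stress;v)$, which is the wrong direction for your estimate $c_1(\tensf E^\star)\le\int_\Omega f_{\mathrm c}^{\mathrm{HS}}(\stress;v^+)$.
\begin{itemize}
\item For the value chain, simply use the Sion minimizer; no laminate is needed.
\item For the ``moreover'' part you also need that the explicit Allaire--Kohn lower complementary bound equals the Fenchel conjugate of $f^{\mathrm{HS}}$. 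This follows by running the same Sion argument over the compact convex set of lamination parameters, on which the laminate tensor depends concavely in the Loewner sense. Alternatively, cite it from Allaire's book, as the paper implicitly does.
\end{itemize}
With that identification in place, your gluing argument via the local character of the $G$-closure gives the orthotropic rank-$\le 2$ attainment.
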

\begin{proof}
Fix any HS-FMO admissible pair $(v^+,\tensf E)$, i.e.,
$v^+\in\mathcal V_{\overline{V}}$,
$\tensf E\in L^\infty(\Omega;\mathbb{S}^{2\times2\times2\times2})$,
and $\tensf E(\tens{x})\in\mathcal A^{(2)}(v^+(\tens{x}))$ for a.e.\ $\tens{x}\in\Omega$.
For this fixed pair, the single-loadcase compliance admits the dual representation \eqref{eq:compliance_sup},
\begin{equation}
c(v^+,\tensf E)
=
\sup_{\tens{w}\in\mathcal U_0}
\left\{
2\,\ell(\tens{w})
-\int_\Omega \langle \tensf E\,\strain(\tens{w}),\strain(\tens{w})\rangle\,\mathrm d\tens{x}
\right\}.
\end{equation}
Because $\tensf E(\tens{x})\in\mathcal A^{(2)}(v^+(\tens{x}))$, for a.e.\ $\tens{x}\in\Omega$ and all $\strain\in\mathbb{S}^{2\times2}$ we have $\langle \tensf E(\tens{x})\strain,\strain\rangle \le f^{\mathrm{HS}}(\strain;v^+(\tens{x}))$. Hence, for every $\tens{w}\in\mathcal U_0$,
\begin{equation}
2\,\ell(\tens{w})
-\int_\Omega \langle \tensf E\,\strain(\tens{w}),\strain(\tens{w})\rangle\,\mathrm d\tens{x}
\ge
2\,\ell(\tens{w})
-\int_\Omega f^{\mathrm{HS}}(\strain(\tens{w});v^+)\,\mathrm d\tens{x}.
\end{equation}
Taking supremum in $\tens{w}$ yields
\begin{equation}
c(v^+,\tensf E)\ge
\sup_{\tens{w}\in\mathcal U_0}
\left\{
2\,\ell(\tens{w})
-\int_\Omega f^{\mathrm{HS}}(\strain(\tens{w});v^+)\,\mathrm d\tens{x}
\right\}.
\end{equation}
Taking infimum over all HS-FMO admissible $(v^+,\tensf E)$ gives
\begin{equation}\label{eq:tight_first_ineq}
c_{\mathrm{HS\text{-}FMO}}^{\mathrm{cont}}
\ge
c_{\mathrm{AK,pr}}^{\mathrm{cont}}.
\end{equation}

For the converse inequality, fix $\delta>0$ and define
\begin{equation}
\mathcal J(v^+)
:=
\sup_{\tens{u}\in\mathcal U_0}
\left\{
2\,\ell(\tens{u})
-\int_\Omega f^{\mathrm{HS}}(\strain(\tens{u});v^+)\,\mathrm d\tens{x}
\right\},
\qquad v^+\in\mathcal V_{\overline{V}}.
\end{equation}
By definition of $c_{\mathrm{AK,pr}}^{\mathrm{cont}}$, there exists $v_\delta^+\in\mathcal V_{\overline{V}}$ such that $\mathcal J(v_\delta^+)\le c_{\mathrm{AK,pr}}^{\mathrm{cont}}+\delta$. Now use the recovery result for the single-loadcase Allaire--Kohn relaxation \citep{Allaire1993,Allaire1993od}: there exists a sequence of sequential-laminate microstructures with characteristic scale tending to zero; denote by $(v_{\delta,m}^+,\tensf E_{\delta,m})$ with $m\to\infty$ their associated homogenized (effective) design fields. In the present 2D, well-ordered isotropic setting, these recovering laminates can be chosen orthotropic and of rank at most two (rank-one in one branch, rank-two in the others, with layering directions tied to principal directions), cf. \citep{Allaire1993,Allaire2002}. These satisfy $v_{\delta,m}^+\in \mathcal V_{\overline{V}}$, and $\tensf E_{\delta,m}(\tens{x})\in\mathcal A^{(2)}(v_{\delta,m}^+(\tens{x}))$ for a.e. $\tens{x}\in\Omega$, and $c(v_{\delta,m}^+,\tensf E_{\delta,m})\longrightarrow \mathcal J(v_\delta^+)$. Since $c_{\mathrm{HS\text{-}FMO}}^{\mathrm{cont}}$ is the infimum over all such admissible pairs, for every $m$ we have $c_{\mathrm{HS\text{-}FMO}}^{\mathrm{cont}} \le c(v_{\delta,m}^+,\tensf E_{\delta,m})$. Passing to the limit $m\to\infty$ gives $c_{\mathrm{HS\text{-}FMO}}^{\mathrm{cont}} \le \mathcal J(v_\delta^+) \le c_{\mathrm{AK,pr}}^{\mathrm{cont}}+\delta$. Because $\delta>0$ is arbitrary, we obtain
\begin{equation}\label{eq:tight_second_ineq}
c_{\mathrm{HS\text{-}FMO}}^{\mathrm{cont}}
\le
c_{\mathrm{AK,pr}}^{\mathrm{cont}}.
\end{equation}
Combining \eqref{eq:tight_first_ineq} and \eqref{eq:tight_second_ineq}, $c_{\mathrm{HS\text{-}FMO}}^{\mathrm{cont}} = c_{\mathrm{AK,pr}}^{\mathrm{cont}}$.

It remains to identify the primal and dual AK values directly in the fixed-volume setting. By definition \eqref{eq:gak_def}, for every fixed $v^+$, $f_{\mathrm{c}}^{\mathrm{HS}}(\cdot;v^+)$ is the Fenchel conjugate of $f^{\mathrm{HS}}(\cdot;v^+)$. In the homogenized setting, combining the minimum complementary-energy principle \citep[Theorem~4.1.9 and Eqs.~(4.36)--(4.39)]{Allaire2002}, the local character of the $G$-closure \citep[Theorem~2.1.2]{Allaire2002}, and the explicit Hashin--Shtrikman complementary bound/regularity \citep[Proposition~2.3.25 and Remark~2.3.27]{Allaire2002} yields, for every fixed $v^+\in\mathcal V_{\overline{V}}$,
\begin{equation}
\sup_{\tens{u}\in\mathcal U_0}
\left\{
2\,\ell(\tens{u})
-\int_\Omega f^{\mathrm{HS}}(\strain(\tens{u});v^+)\,\mathrm d\tens{x}
\right\}
=
\inf_{\stress\in\Sigma_{\mathrm{ad}}}
\int_\Omega f_{\mathrm{c}}^{\mathrm{HS}}(\stress;v^+)\,\mathrm d\tens{x}.
\end{equation}
Taking the infimum over $v^+\in\mathcal V_{\overline{V}}$ on both sides yields $c_{\mathrm{AK,pr}}^{\mathrm{cont}} = c_{\mathrm{AK,du}}^{\mathrm{cont}}$. Hence, $c_{\mathrm{HS\text{-}FMO}}^{\mathrm{cont}} = c_{\mathrm{AK,pr}}^{\mathrm{cont}} = c_{\mathrm{AK,du}}^{\mathrm{cont}} = c_{\mathrm{AK}}^{\mathrm{cont}}$. The same recovery construction yields attainment in the relaxation sense by a sequence of orthotropic sequential laminates of rank at most two.
\end{proof}

Theorem~\ref{th:tight} is a continuum relaxation statement based on scale separation: the relaxed optimum is attained in the relaxation sense by sequences of sequential laminates whose characteristic length tends to zero. One should therefore not interpret Theorem~\ref{th:tight} as an exact equivalence for a fixed finite element discretization of \eqref{eq:fmo_hashin} or \eqref{eq:fmo_hashin_reduced}. Three issues are relevant.

First, while numerical stabilization (filters, perimeter or gradient penalties on design variables) is often necessary in practice \citep{Jog1994}, it is not innocuous from the viewpoint of laminate realizability: the set of orthotropic sequential laminate tensors (at fixed $v^+$) is generally not convex in $\tensf{E}$ (see Fig.~\ref{fig:hashin_laminates}). Consequently, regularization mechanisms that enforce local averaging or smoothness of $\tensf{E}$ may produce tensors that are no longer representable in this class, even if each unregularized pointwise optimum is.

Second, the Hashin--Shtrikman envelope $f^{\mathrm{HS}}(\strain;v)$ is tight in the sense that, for each fixed strain $\strain$, there exist sequential laminates attaining equality in \eqref{eq:hashin_shtrikman}. In a discretization with nonconstant strains inside an element (e.g., higher-order elements or multi-point quadrature), the elemental contribution involves multiple strain modes. Optimality then drives $\Ei$ against a superposition of strain directions. There is, in general, no guarantee that a single orthotropic sequential laminate simultaneously saturates all active bounds; thus laminate constructions designed for a single strain direction need not be exact discrete optimizers on a fixed mesh.

Last, Allaire's continuum duality framework constructs laminates pointwise from the local macroscopic fields. In discrete implementations this is often approximated by designing laminates for an elementwise representative strain/stress (e.g., midpoint or average \citep{Allaire2002}). Such a replacement is theoretically justified precisely in the scale-separation regime where the macroscopic strain is (approximately) constant at the sampling scale; otherwise it should be viewed as an approximation rather than an exact characterization of the discrete HS-FMO optimum.

In summary, Theorem~\ref{th:tight} identifies the correct continuum limit of the HS-FMO formulation. In the single-load, well-ordered isotropic case, the HS-relaxed problem is tight: it attains the same optimal value as the relaxed compliance minimization problem of \citet{Allaire1993od}, and both optima are attained by sequential laminates. Nevertheless, in our formulation we do not restrict the feasible set of the elasticity tensors to sequential laminates or any other specific microstructure class \citep{Allaire1999}. Moreover, for a fixed finite element discretization, discrete equivalence with laminate-based constructions is expected only asymptotically, and it may be altered by numerical regularization. For multiple loadcases, the optimal solution of \eqref{eq:fmo_hashin} or \eqref{eq:fmo_hashin_reduced} is generally not tight and therefore provides only a lower bound on the optimal value of compliance minimization over all microstructures; tightness in the multi-loadcase setting would additionally require enforcing Hashin--Shtrikman upper bounds on the sum of elastic energies \citep{Allaire1993wo,Allaire1996,Avellaneda1987}.

\subsection{Free orthotropic material optimization under Hashin--Shtrikman upper bound}\label{sec:fmo_ortho}

In what follows, we consider the FMO problem under the Hashin--Shtrikman upper bounds \eqref{eq:fmo_hashin_reduced}, with the additional restriction that $\{\Ei\}$ are orthotropic. We refer to this setting as free-orthotropic material optimization (FOMO), and to its Hashin--Shtrikman-constrained version as HS-FOMO. Orthotropic materials possess three mutually orthogonal planes of symmetry, which in two dimensions implies that the stiffness tensor has four independent components. In a suitable coordinate system, the off-diagonal shear couplings vanish, i.e., $E_{1112} = E_{2212} = 0$.

To make the dependence on the material orientation explicit, for each orthotropic tensor $\Ei$ we denote by $\Ei^{\mathrm{b}}$ its representation in a material (base) coordinate system in which $(\Ei^{\mathrm{b}})_{1112}=(\Ei^{\mathrm{b}})_{2212}=0$. We parametrize its global orientation by an angle $\varphi_i\in[0,\pi)$ and the corresponding planar rotation
\begin{equation}\label{eq:rot_matrix_2d}
    \semtrx{Q}(\varphi):=
    \begin{pmatrix}
        \cos\varphi & -\sin\varphi\\
        \sin\varphi & \cos\varphi
    \end{pmatrix}.
\end{equation}
The stiffness tensor $\Ei$ in global coordinates is then obtained by the standard tensor rotation rule
\begin{equation}\label{eq:ortho_tensor_rotation}
    (\Ei)_{ijkl}=\sum_{p,q,r,s=1}^2 (\semtrx{Q}(\varphi_i))_{ip}(\semtrx{Q}(\varphi_i))_{jq}(\semtrx{Q}(\varphi_i))_{kr}(\semtrx{Q}(\varphi_i))_{ls}\,(\Ei^{\mathrm{b}})_{pqrs}.
\end{equation}

Since the Hashin--Shtrikman envelope $f^{\mathrm{HS}}(\strain;v)$ for isotropic phases is isotropic in $\strain$, and both $\Tr(\strain)$ and $\lVert\strain\rVert_F$ are rotation-invariant, the minimal activating volume is invariant under simultaneous rotations of the microstructure and the macroscopic strain. In particular, with $\strain^{\mathrm{b}}:=\semtrx{Q}(\varphi_i)^\trn \strain\,\semtrx{Q}(\varphi_i)$ we have
\begin{equation}\label{eq:hatv_rotation_invariance}
    \hat{v}(\strain;\Ei)=\hat{v}(\strain^{\mathrm{b}};\Ei^{\mathrm{b}}).
\end{equation}
Consequently, the worst-case volume fraction $\overline{v}_i(\Ei)=\sup_{\lVert\strain\rVert_F=\sqrt{2}/2}\hat{v}(\strain;\Ei)$ is independent of the orientation angle $\varphi_i$ and depends on $\Ei$ only through its base coefficients.

As described in the previous section, for two-dimensional composites with well-ordered isotropic constituents, the Hashin--Shtrikman upper energy bound \eqref{eq:hashin_shtrikman}, with $q(t;v_i^+)$ defined in \eqref{eq:qt}, is tight and admits an explicit form that splits into three regimes \citep{Allaire1993}: The first regime is attained by rank-one sequential laminates (which are orthotropic), while the other two are realized by rank-two sequential laminates with orthogonal layering directions, so the resulting effective tensors are again orthotropic. Hence, the bound \eqref{eq:hashin_shtrikman} remains tight even when the admissible set of tensors $\Ei$ is restricted to the orthotropic symmetry class with free rotations of the material coordinate system.

Under this restriction, and using \eqref{eq:hatv_rotation_invariance} to work in the base coordinate system of $\Ei$, the minimal activating volume $\hat{v}(\strain;\Ei)$ in Proposition \ref{prop:explicit_volume} can be simplified and depends on $\strain$ only through $t$, as stated in the following proposition.

\begin{proposition}[Trace-only reduction of the HS activating-volume map for orthotropy]\label{prop:trace_only_reduction}
    For any fixed orthotropic, plane stress $\Ei \in \{\tensf{E}: \Emin \preceq \tensf{E} \preceq \Emax\}$, let $\Ei^{\mathrm{b}}$ denote its base orientation (so that $(\Ei^{\mathrm{b}})_{1112}=(\Ei^{\mathrm{b}})_{2212}=0$). For any strain $\strain\in\mathbb{S}^{2\times2}$ with Frobenius norm $\sqrt{2}/2$, set
    $$
        E_{\max}(t;\Ei) := \max\left\{\langle \Ei\strain,\strain\rangle: \strain\in\mathbb{S}^{2\times2},\ \lVert\strain\rVert_F=\sqrt{2}/2,\ \lvert\Tr(\strain)\rvert = t\right\}.
    $$
    Then, the minimal $v_i^+\in[0,1]$ for which the Hashin--Shtrikman upper bound \eqref{eq:hashin_shtrikman} with $q(t;v_i^+)$ defined in \eqref{eq:qt} is active, i.e., $\langle \Ei\strain,\strain\rangle = f^{\mathrm{HS}}(\strain;v_i^+)$, depends on $\strain$ only through $t$, and is obtained by replacing $\langle \Ei\strain,\strain\rangle$ with $E_{\max}(t;\Ei)$ in the three closed-form branches $\hat{v}_1,\hat{v}_2,\hat{v}_3$ stated in Proposition \ref{prop:explicit_volume}, with the same branch-activation conditions evaluated at $\langle \Ei\strain,\strain\rangle=E_{\max}(t;\Ei)$. Moreover, $E_{\max}(t;\Ei)$ admits the explicit form
    \begin{equation}\label{eq:excplicitenergybranches}
    E_{\max}(t;\Ei)=
    \begin{cases}
    & \begin{aligned}
        &\frac{t^2}{4} \big((\Ei^{\mathrm{b}})_{1111}+2(\Ei^{\mathrm{b}})_{1122}+(\Ei^{\mathrm{b}})_{2222}\big) + (\Ei^{\mathrm{b}})_{1212}(1-t^2)
        - \frac{t^2 \big((\Ei^{\mathrm{b}})_{1111}-(\Ei^{\mathrm{b}})_{2222}\big)^2}{4 \Xi}\\
        &\qquad\text{if }\ \Xi<0 \;\text{ and }\; t\le\frac{\lvert \Xi \rvert}{\sqrt{\Xi^2 + \big((\Ei^{\mathrm{b}})_{1111}-(\Ei^{\mathrm{b}})_{2222}\big)^2}},
    \end{aligned}\\
    & \begin{aligned}
        &\frac{1}{4}\big((\Ei^{\mathrm{b}})_{1111}-2 (\Ei^{\mathrm{b}})_{1122}+(\Ei^{\mathrm{b}})_{2222}\big)
        + (\Ei^{\mathrm{b}})_{1122} t^2
        + \frac{\big\lvert (\Ei^{\mathrm{b}})_{1111}-(\Ei^{\mathrm{b}})_{2222}\big\rvert}{2} t \sqrt{1-t^2}\\
        &\qquad\text{otherwise},
    \end{aligned}
    \end{cases}
    \end{equation}
    where $\Xi:=(\Ei^{\mathrm{b}})_{1111}-2(\Ei^{\mathrm{b}})_{1122}+(\Ei^{\mathrm{b}})_{2222}-4(\Ei^{\mathrm{b}})_{1212}$.
\end{proposition}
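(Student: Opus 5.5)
The plan is to reduce the statement to two facts: the HS envelope depends on $\strain$ only through $t$, and $\hat v$ is monotone in the energy level $\langle\Ei\strain,\strain\rangle$. Once both are in place, maximizing $\hat v$ over strains with fixed $t$ is the same as maximizing the energy over those strains, and that is an elementary one-variable quadratic maximization.

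\textbf{Step 1 (the envelope depends only on $t$).} For an isotropic tensor with moduli $(\kappa,\mu)$ in $d=2$ we have $\langle\tensf E\strain,\strain\rangle=\kappa\Tr(\strain)^2+2\mu\lvert\strain-\tfrac12\Tr(\strain)\tenss I_{2\times2}\rvert^2=\kappa t^2+\mu s^2$. On the sphere $\lVert\strain\rVert_F=\sqrt2/2$ this equals $\kappa t^2+\mu(1-t^2)$. Applying this to the Voigt mixture $(1-v)\Emin+v\Emax$ and using \eqref{eq:qt}, $f^{\mathrm{HS}}(\strain;v)=:F(t;v)$ is a function of $(t,v)$ only. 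The branch conditions in \eqref{eq:qt} also depend only on $(t,v)$.

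\textbf{Step 2 (monotonicity of $\hat v$ in the energy).} Fix $t$. By Propositions~\ref{prop:hs_continuous} and~\ref{prop:hs_monotone}, $v\mapsto F(t;v)$ is continuous and strictly increasing. Hence the minimal activating volume, i.e.\ the solution of $e=F(t;v)$ (Corollary~\ref{cor:uniqueness}), is a nondecreasing function $e\mapsto\hat v(t;e)$ of the energy level $e$. Consequently, on the set $\{\lvert\Tr\strain\rvert=t\}$ the volume $\hat v$ is largest where $\langle\Ei\strain,\strain\rangle$ is largest. This gives both claims at once: the relevant activating volume depends on $\strain$ only through $t$, and it is obtained by inserting $e=E_{\max}(t;\Ei)$ into the closed forms $\hat v_1,\hat v_2,\hat v_3$ of Proposition~\ref{prop:explicit_volume}. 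Since Proposition~\ref{prop:explicit_volume} derives those closed forms from the equation $e=F(t;v)$ with $e$ a free parameter, the branch-activation conditions carry over verbatim with $e=E_{\max}$. By \eqref{eq:hatv_rotation_invariance} we may work with $\Ei^{\mathrm b}$ and $\strain^{\mathrm b}$ throughout. I expect this step to be the main obstacle. Monotonicity has to be checked across branch transitions, which needs the continuity of $q$ on $\Gamma_{12}$ and $\Gamma_{13}$. One must also make sure that the branch chosen for $E_{\max}$ is the one that would be selected by the equation $e=F(t;v)$, not one inherited from some other strain with the same $t$. I would handle this by arguing entirely at the level of the scalar equation $e=F(t;v)$, so that the branch is determined by $(t,e)$ alone.

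\textbf{Step 3 (computing $E_{\max}$).} Since $\strain\mapsto-\strain$ leaves the energy unchanged, we may take $\Tr\strain=t$. In the base frame write $\varepsilon_{11}=(t+\delta)/2$ and $\varepsilon_{22}=(t-\delta)/2$. The normalization then forces $4\varepsilon_{12}^2=1-t^2-\delta^2$ with $\delta\in[-\sqrt{1-t^2},\sqrt{1-t^2}]$, and the sign of $\varepsilon_{12}$ is irrelevant because of orthotropy. Substituting,
\begin{equation*}
\langle\Ei^{\mathrm b}\strain,\strain\rangle=\tfrac{t^2}{4}\big(E_{1111}+2E_{1122}+E_{2222}\big)+E_{1212}(1-t^2)+\tfrac{t\delta}{2}\big(E_{1111}-E_{2222}\big)+\tfrac{\delta^2}{4}\,\Xi ,
\end{equation*}
where all components are those of $\Ei^{\mathrm b}$.

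This is a quadratic in $\delta$ on an interval, so there are two cases.
\begin{itemize}
\item If $\Xi<0$ and the vertex $\delta^\ast=-t(E_{1111}-E_{2222})/\Xi$ lies in the interval, the maximum is attained at $\delta^\ast$. This yields the first formula. The condition $\lvert\delta^\ast\rvert\le\sqrt{1-t^2}$ squares to $t^2(E_{1111}-E_{2222})^2\le\Xi^2(1-t^2)$, which is exactly the stated threshold on $t$.
\item Otherwise (the vertex is outside the interval, or $\Xi\ge0$ so the quadratic is convex), the maximum is attained at the endpoint $\delta=\operatorname{sign}(E_{1111}-E_{2222})\sqrt{1-t^2}$. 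Collecting terms gives the second formula.
\end{itemize}
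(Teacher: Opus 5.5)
Your proposal is correct and follows essentially the same route as the paper: monotonicity of the activating volume in the energy level reduces everything to $E_{\max}(t;\Ei)$, and rotation invariance moves the computation to the base frame. $E_{\max}$ is then obtained by maximizing a quadratic in the single deviatoric parameter $\delta=2\zeta$, splitting on the sign of $\Xi$ and on whether the vertex lies in $\lvert\delta\rvert\le\sqrt{1-t^2}$; you also make explicit two points the paper leaves implicit, namely that $f^{\mathrm{HS}}$ depends only on $(t,v)$ on the normalized sphere and that the maximizing endpoint is $\delta=\operatorname{sign}(E_{1111}-E_{2222})\sqrt{1-t^2}$.
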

\begin{proof}
Because $v_i^+ \mapsto f^{\mathrm{HS}}(\strain;v_i^+)$ is nondecreasing by Proposition \ref{prop:hs_monotone} for all $\strain$, the minimal zero $\hat{v}$ of $f^{\mathrm{HS}}(\strain;v_i^+) - \langle \Ei\strain,\strain\rangle$ is nondecreasing in the energy $\langle \Ei\strain,\strain\rangle$. Hence, for any fixed $t$, $\hat{v}(\strain;\Ei)$ is the largest when $\langle \Ei\strain,\strain\rangle$ is the largest. Therefore, for purposes of evaluating the minimal activating volume at that $t$, it suffices to plug the maximizing energy $E_{\max}(t;\Ei)$ into the explicit branch formulas. This eliminates all dependence on the remaining components of $\strain$ beyond $t$.

By \eqref{eq:hatv_rotation_invariance}, we may rotate the pair and work with $(\Ei^{\mathrm{b}},\strain^{\mathrm{b}})$ instead of $(\Ei,\strain)$. In particular $E_{\max}(t;\Ei)=E_{\max}(t;\Ei^{\mathrm{b}})$. To this goal, we work in the base coordinate system of $\Ei$ and write $\strain$ in components as
\begin{equation}
\strain = \begin{pmatrix}
    \varepsilon_{11} & \varepsilon_{12}\\
    \varepsilon_{12} & \varepsilon_{22}
\end{pmatrix} = 
\begin{pmatrix}
    \hat{t}/2 + \zeta & r\\
    r & \hat{t}/2 - \zeta
\end{pmatrix}
\end{equation}
where $\hat{t} = \Tr(\strain)$. Notice that $t = \lvert \hat{t}\rvert$. Then, the strain energy reads as
\begin{multline}
    \langle \Ei^{\mathrm{b}}\strain,\strain\rangle = \frac{\hat{t}^2}{4}\left((\Ei^{\mathrm{b}})_{1111} + 2(\Ei^{\mathrm{b}})_{1122} + (\Ei^{\mathrm{b}})_{2222}\right) + \zeta^2 \left((\Ei^{\mathrm{b}})_{1111} - 2(\Ei^{\mathrm{b}})_{1122} + (\Ei^{\mathrm{b}})_{2222}\right) \\
    + 4(\Ei^{\mathrm{b}})_{1212} r^2 + \hat{t} \zeta ((\Ei^{\mathrm{b}})_{1111} - (\Ei^{\mathrm{b}})_{2222}).
\end{multline}
The Frobenius norm constraint on $\strain$ requires $\zeta^2 + r^2 = (1 - \hat{t}^2)/4$. On one hand, this allows us to eliminate $r^2$ through $r^2 = (1 - \hat{t}^2)/4 - \zeta^2$. On the other hand, it implies $\zeta^2 \le (1 - \hat{t}^2)/4$ and $\hat{t}^2 \le 1$. Hence,
\begin{multline}\label{eq:opt_emax}
    E_{\max}(\hat{t};\Ei) = \max_{\zeta^2 \le \frac{1 - \hat{t}^2}{4}} \left[\frac{\hat{t}^2}{4}\left((\Ei^{\mathrm{b}})_{1111} + 2(\Ei^{\mathrm{b}})_{1122} + (\Ei^{\mathrm{b}})_{2222} - 4(\Ei^{\mathrm{b}})_{1212}\right)\right.\\
    \left.\phantom{\frac{1 - \hat{t}^2}{4}}+ \zeta^2 \left((\Ei^{\mathrm{b}})_{1111} - 2(\Ei^{\mathrm{b}})_{1122} + (\Ei^{\mathrm{b}})_{2222} - 4(\Ei^{\mathrm{b}})_{1212}\right) + \hat{t} \zeta ((\Ei^{\mathrm{b}})_{1111} - (\Ei^{\mathrm{b}})_{2222}) + (\Ei^{\mathrm{b}})_{1212}\right],
\end{multline}
which is a quadratic function in $\zeta$. It is strictly concave in $\zeta$ iff $\Xi = (\Ei^{\mathrm{b}})_{1111} - 2(\Ei^{\mathrm{b}})_{1122} + (\Ei^{\mathrm{b}})_{2222} - 4(\Ei^{\mathrm{b}})_{1212} < 0$, i.e., shear has a dominant contribution. Then, the maximum is attained at the stationary point $\zeta^\star = -\hat{t} ((\Ei^{\mathrm{b}})_{1111} - (\Ei^{\mathrm{b}})_{2222})/(2\Xi)$ if $(\zeta^\star)^2 \le (1-\hat{t}^2)/4$ or, equivalently, if $t = \lvert \hat{t}\rvert \le \lvert \Xi \rvert /\sqrt{\Xi^2 + ((\Ei^{\mathrm{b}})_{1111}-(\Ei^{\mathrm{b}})_{2222})^2}$. Inserting $\zeta^\star$ into the expression for $E_{\max}(\hat{t};\Ei)$ gives
\begin{equation}
    \frac{\hat{t}^2}{4} ((\Ei^{\mathrm{b}})_{1111}+2(\Ei^{\mathrm{b}})_{1122}+(\Ei^{\mathrm{b}})_{2222}) + (\Ei^{\mathrm{b}})_{1212}(1-\hat{t}^2)
        - \frac{\hat{t}^2 ((\Ei^{\mathrm{b}})_{1111}-(\Ei^{\mathrm{b}})_{2222})^2}{4 \Xi}.
\end{equation}
Since $\hat{t}^2 = t^2$ and the maximum value is even in $\hat{t}$, we get the first branch in \eqref{eq:excplicitenergybranches}.

In the remaining cases when $\zeta^2 > (1-\hat{t}^2)/4$ or $\Xi\ge0$, maximum of \eqref{eq:opt_emax} occurs at an endpoint, so that $(\zeta^\star)^2 = \frac{1 - \hat{t}^2}{4}$. Inserting this into \eqref{eq:opt_emax}, we receive the maximum energy
\begin{equation}
    \frac{1}{4}((\Ei^{\mathrm{b}})_{1111}-2 (\Ei^{\mathrm{b}})_{1122}+(\Ei^{\mathrm{b}})_{2222})
        + (\Ei^{\mathrm{b}})_{1122} \hat{t}^2
        + \frac{\lvert (\Ei^{\mathrm{b}})_{1111}-(\Ei^{\mathrm{b}})_{2222}\rvert}{2} \lvert\hat{t}\rvert \sqrt{1-\hat{t}^2},
\end{equation}
which is exactly the second branch in \eqref{eq:excplicitenergybranches} when using $t=\lvert \hat{t}\rvert$.
\end{proof}

By Propositions \ref{prop:explicit_volume} and \ref{prop:trace_only_reduction}, $\hat{v}(t;\Ei^{\mathrm{b}})$ is univariate on $t\in[0,1]$, obtained by composing $E_{\max}(t;\Ei)$ with the branchwise formulas in Proposition \ref{prop:explicit_volume}, yielding the orthotropic FMO formulation
\begin{subequations}\label{eq:fmo_hashin_ortho}
\begin{align}
    \min_{\{c_j\}, \{\Ei\}} \; & \sum_{j=1}^{\nlc} c_j \label{eq:fmo_hashin_ortho_compliance}\\
    \mathrm{s.t.}\; & \begin{pmatrix}
        c_j & -\sevek{f}_j^\trn\\
        -\sevek{f}_j & \semtrx{K}_j\left(\{\Ei\}\right)
    \end{pmatrix}\succeq 0, \quad j = 1,\dots,\nlc,\label{eq:fmo_hashin_ortho_lmi}\\
    & \Emin \preceq \Ei \preceq \Emax,\quad i = 1,\dots,\nel,\label{eq:fmo_hashin_ortho_bounds}\\
    & \frac{1}{\nel}\sum_{i=1}^{\nel} \sup_{t \in [0,1]} \hat{v}(t;\Ei^{\mathrm{b}}) \le \overline{V},\label{eq:fmo_hashin_ortho_volume}\\
    & \tensf{E}_i \text{ orthotropic},\quad i = 1,\dots,\nel.\label{eq:fmo_hashin_ortho_ortho}
\end{align}
\end{subequations}
in which---for each $\Ei$---the univariate supremum in \eqref{eq:fmo_hashin_ortho_volume} can be computed to approximate global optimality by coarse sampling to bracket candidate intervals, followed by golden-section refinement on each bracket; empirical plots contained at most two maxima.

Problem \eqref{eq:fmo_hashin_ortho} is again nonconvex due to the volume constraint \eqref{eq:fmo_hashin_ortho_volume}. For a single-loadcase, its continuum relaxation value is consistent with the HS/AK value from Theorem~\ref{th:tight}; however, this does not imply exact attainment on a fixed finite-element mesh. Laminate attainability should be understood in the continuum relaxation (vanishing-scale) sense.

\subsection{Sequential global programming for free-orthotropic material optimization}\label{sec:sgp}

To solve problem \eqref{eq:fmo_hashin_ortho}, we apply the sequential global programming (SGP) approach, which was first suggested in \citep{Semmler-SIAM-2018} and later applied in different contexts, see, e.g. \citep{Nees-SAMO-2023, Vu-SAMO-2023}. The SGP algorithm is an iterative method in which, in each iteration, a first-order model is established that is separable with respect to the anisotropic material tensors $\Ei, i=1,\ldots,\nel$. To explain this in more detail, we first rewrite problem (\ref{eq:fmo_hashin_ortho}) in the following form:
\begin{subequations}\label{eq:fmo_hashin_ortho_nested}
\begin{align}
    \min_{\{\Ei\}} \; & \sum_{j=1}^{\nlc} c_j\left(\{\Ei\}\right) \label{eq:fmo_hashin_ortho_compliance2}\\
    \mathrm{s.t.}\; & \Emin \preceq \Ei \preceq \Emax,\quad i = 1,\dots,\nel,\label{eq:fmo_hashin_ortho_bounds2}\\
    & \frac{1}{\nel}\sum_{i=1}^{\nel} \sup_{t \in [0,1]} \hat{v}(t;\Ei^{\mathrm{b}}) = \overline{V},\label{eq:fmo_hashin_ortho_volume2}\\
    & \Ei \text{ orthotropic},\quad i = 1,\dots,\nel,\label{eq:fmo_hashin_ortho_ortho2}
\end{align}
\end{subequations}
where $c_j\left(\{\Ei\}\right) = \sevek{f}_j^\trn \semtrx{K}_j\left(\{\Ei\}\right)^{-1}\sevek{f}_j$. We note that we have expressed the volume constraint as an equality constraint, as the volume is fully utilized at an optimal solution (recall Lemma~\ref{lem:HS_active}). Since constraints (\ref{eq:fmo_hashin_ortho_bounds2}) and (\ref{eq:fmo_hashin_ortho_ortho2}) are formulated element-wise and the volume constraint (\ref{eq:fmo_hashin_ortho_volume2}) is separable with respect to $\{\Ei\}$, it remains to approximate the cost function (\ref{eq:fmo_hashin_ortho_compliance2}) by a separable first-order model. Following \citep{Vu-SAMO-2023}, we use a variant of the separable model introduced in \citep{Stingl-SIAM-2009} for FMO problems. To do so, we denote by $\{\EiM\}$ the corresponding Kelvin--Mandel matrices and, at the expansion point $\{\overline{\Ei}\}$, we define
\begin{equation}\label{eq:Gij_def}
\semtrx{G}_i^j := \frac{\partial c_j \left(\{\overline{\Ei}\}\right)}{\partial \EiM}\in \mathbb{S}^{3\times3}.
\end{equation}
We then approximate the compliance for the loadcase $j$ as
\begin{align}
\hat{c}_j\left(\{\Ei\};\{\overline{\Ei}\}\right) & = C + \sum_{i=1}^{\nel} \left\langle (\overline{\EiM} - \semtrx{L}_i^j) \semtrx{G}_i^j (\overline{\EiM} - \semtrx{L}_i^j), (\EiM - \semtrx{L}_i^j)^{-1}\right\rangle_{\mathbb{S}^{3\times3}},
\end{align}
where the constant $C$ is chosen such that the function value of $\hat{c}_j$ coincides with that of $c_j$ at the expansion point $\{\overline{\Ei}\}$ and $\{\semtrx{L}_i^j\}_{i=1,\ldots,\nel}^{j=1,\ldots,\nlc}$ are generalized asymptotes, which are chosen to be zero matrices in the sequel. Using a Lagrangian approach, we obtain the model problem
\begin{equation}\label{eq:fmo_hashin_ortho_separable}
    \min_{\{\Emin \preceq \Ei \preceq \Emax,\Ei \text{ orth.}\}} \max_{\lambda}\; \sum_{j=1}^{\nlc} \hat{c}_j\left(\{\Ei\};\{\overline{\Ei}\}\right) + \lambda\left( \frac{1}{\nel}\sum_{i=1}^{\nel} \sup_{t \in [0,1]} \hat{v}(t;\Ei^{\mathrm{b}}) - \overline{V}\right).
\end{equation}
We note that due to the orthotropy and volume constraints, (\ref{eq:fmo_hashin_ortho_separable}) is a nonconvex optimization problem. Nevertheless, after switching the order of $\min$ and $\max$, we obtain the Lagrangian dual problem:
\begin{equation}\label{eq:fmo_hashin_ortho_separable_dual}
    \max_{\lambda}\; \psi(\lambda),
\end{equation}
where the function value of the dual function $\psi$ for a given $\lambda$ is computed by solving 
\begin{subequations}\label{eq:fmo_hashin_ortho_separable_lambda}
\begin{align}
    \min_{\{\Ei\}}\; & \sum_{j=1}^{\nlc} \hat{c}_j\left(\{\Ei\};\{\overline{\Ei}\}\right) + \lambda\left( \frac{1}{\nel}\sum_{i=1}^{\nel} \sup_{t \in [0,1]} \hat{v}(t;\Ei^{\mathrm{b}}) - \overline{V}\right) \\
    \mathrm{s.t.} \; & \Emin \preceq \Ei \preceq \Emax,\quad i = 1,\dots,\nel,\\
    & \Ei \text{ orthotropic},\quad i = 1,\dots,\nel,
\end{align}
\end{subequations}
to global optimality. From the saddle point theory we see that, if we find a pair $(\{\Ei^*\},\lambda^*)$, which is a saddle point for the Lagrangian
$$
L(\{\Ei\},\lambda) = \sum_{j=1}^{\nlc} \hat{c}_j\left(\{\Ei\};\{\overline{\Ei}\}\right) + \lambda\left( \frac{1}{\nel}\sum_{i=1}^{\nel} \sup_{t \in [0,1]} \hat{v}(t;\Ei^{\mathrm{b}}) - \overline{V}\right),
$$
then this pair is also optimal for (\ref{eq:fmo_hashin_ortho_separable_lambda}). Obviously this is the case, if for a given $\lambda^*$, the corresponding (globally optimal) solution $(\{\Ei(\lambda^*)\})$ is feasible with respect to the volume constraint (stated as equality constraint), which can be easily checked in practice. Thus, we use the following strategy: as in the classical optimality criteria method, we solve the dual problem (\ref{eq:fmo_hashin_ortho_separable_dual}) using a bisection strategy, where we aim to solve (\ref{eq:fmo_hashin_ortho_separable_lambda}) to global minimality in each iteration. Rather than solving (\ref{eq:fmo_hashin_ortho_separable_lambda}) at once, thanks to separability, we can solve for each $\Ei$ individually, i.~e., we solve $\nel$ problems of the form:
\begin{equation}\label{eq:fmo_hashin_ortho_separable_lambda_i}
    \min_{\Emin \preceq \Ei \preceq \Emax, \Ei \text{ orth.}} \sum_{j=1}^{\nlc} \left\langle \overline{\EiM}\semtrx{G}_i^j\overline{\EiM}, (\EiM)^{-1}\right\rangle_{\mathbb{S}^{3\times3}} + \frac{\lambda}{\nel} \sup_{t \in [0,1]} \hat{v}(t;\Ei^{\mathrm{b}}).
\end{equation}
We note that we skipped the constant $C-\lambda V$ in the objective here and set all asymptotes to zero. 

In order to treat the orthotropy constraint, we parametrize each element tensor $\Ei$ by its base coefficients and a rotation angle. In Kelvin--Mandel form, the base orientation reads
\begin{equation}\label{eq:ortho_param}
    \EiMb :=
    \begin{pmatrix}
        (\Ei^{\mathrm{b}})_{1111} & (\Ei^{\mathrm{b}})_{1122} & 0\\
        (\Ei^{\mathrm{b}})_{1122} & (\Ei^{\mathrm{b}})_{2222} & 0\\
        0 & 0 & 2(\Ei^{\mathrm{b}})_{1212}
    \end{pmatrix},
    \qquad
    \EiM = \semtrx{R}(\varphi_i)\EiMb\semtrx{R}(\varphi_i)^\trn,
\end{equation}
with $\varphi_i\in[0,\pi)$. The matrix $\semtrx{R}(\varphi)\in\mathbb R^{3\times3}$ is the Kelvin--Mandel rotation induced by $\semtrx{Q}(\varphi)$ in \eqref{eq:rot_matrix_2d}, i.e., for every symmetric strain $\strain$, with Mandel vector $\strain^{\mathrm{M}}:=(\varepsilon_{11},\varepsilon_{22},\sqrt2\,\varepsilon_{12})^\trn$, we have $\big(\semtrx{Q}(\varphi)^\trn \strain\,\semtrx{Q}(\varphi)\big)^{\mathrm{M}}=\semtrx{R}(\varphi)\,\strain^{\mathrm{M}}$.

Since $\Emin$ and $\Emax$ are isotropic, the Loewner bounds are rotation invariant and can be imposed directly on the base matrix:
\begin{subequations}\label{eq:ortho_param_voigt}
\begin{align}
    \begin{pmatrix}
        (\Emin)_{1111} & (\Emin)_{1122}\\
        (\Emin)_{1122} & (\Emin)_{2222}
    \end{pmatrix}
    \preceq
    \begin{pmatrix}
        (\EiMb)_{11} & (\EiMb)_{12}\\
        (\EiMb)_{12} & (\EiMb)_{22}
    \end{pmatrix}
    &\preceq
    \begin{pmatrix}
        (\Emax)_{1111} & (\Emax)_{1122}\\
        (\Emax)_{1122} & (\Emax)_{2222}
    \end{pmatrix},\\
    2(\Emin)_{1212} \leq (\EiMb)_{33} &\leq 2(\Emax)_{1212}.
\end{align}
\end{subequations}
With this parametrization, \eqref{eq:fmo_hashin_ortho_separable_lambda_i} can be written as
\begin{equation}\label{eq:fmo_hashin_ortho_separable_lambda_i2}
    \min_{\substack{\EiMb,\,\varphi_i\\ \text{s.t.\ }\eqref{eq:ortho_param_voigt}}}
    \sum_{j=1}^{\nlc} \left\langle \overline{\EiM}\semtrx{G}_i^j\overline{\EiM}, (\EiM)^{-1}\right\rangle_{\mathbb{S}^{3\times3}} + \frac{\lambda}{\nel} \sup_{t \in [0,1]} \hat{v}(t;\Ei^{\mathrm{b}}),
\end{equation}
where $\EiM$ depends on $(\EiMb,\varphi_i)$ through \eqref{eq:ortho_param}.
Problem (\ref{eq:fmo_hashin_ortho_separable_lambda_i}) is a $5$-dimensional optimization problem with semidefinite constraints and, due to the properties of the volume constraint, a nondifferentiable objective. As no global optimization solvers are available, which could treat problems with this structure directly, we have implemented a sampling based solution approach. To do so, we discretized the interval $[0,\pi/2]$ equidistantly by $9001$ angles. The resulting discrete set of angles is denoted by $\Phi^\mathrm{G}$. Similarly, the cuboid $[(\Emin)_{1111},(\Emax)_{1111}]\times [(\Emin)_{2222},(\Emax)_{2222}]\times [2(\Emin)_{1212},2(\Emax)_{1212}]$ is discretized equidistantly by $101^3$ points and the corresponding set of discrete points is denoted by $\Theta^\mathrm{G}$. Finally, for each grid point $(E^\mathrm{G}_{11},E^\mathrm{G}_{22},E^\mathrm{G}_{33})_i \in \Theta^\mathrm{G}$ we compute the interval $I_i$, which contains all choices $E_{12}^\mathrm{G}$ such that 
$$
\begin{pmatrix}
    (\Emin)_{1111} & (\Emin)_{1122}^-\\
    (\Emin)_{1122}^- & (\Emin)_{2222}^-
\end{pmatrix} 
\preceq
\begin{pmatrix}
    E^\mathrm{G}_{11} & E_{12}^\mathrm{G}\\
    E_{12}^\mathrm{G} & E^\mathrm{G}_{22}
\end{pmatrix} 
\preceq
\begin{pmatrix}
    (\Emax)_{1111} & (\Emax)_{1122}^+\\
    (\Emax)_{1122}^+ & (\Emax)_{2222}^+
\end{pmatrix}.
$$
Then, if $I_i$ is nonempty, we again discretize this interval by $101$ points. We denote the corresponding grid by $\mathcal{X}_i$. If $I_i$ is empty, the corresponding point $(E^\mathrm{G}_{11},E^\mathrm{G}_{22},E^\mathrm{G}_{33})_i$ is eliminated from $\Theta^\mathrm{G}$. In this way, we sample only points that are feasible. Then, in principle, we could solve (\ref{eq:fmo_hashin_ortho_separable_lambda_i2}) approximately, by evaluating the cost function for any sample point and accept the one with lowest function value as approximate solution. However, taking into account that the 5D-grid has about 100 billion points, this strategy would not be tractable. Instead, we use a hierarchical grid approach, which only needs about 10 million function evaluations. Moreover, we would like to emphasize that we pre-compute the inverse tensors as well as estimated volume in formula (\ref{eq:fmo_hashin_ortho_separable_lambda_i2}) offline for each grid point.

\begin{remark}
\begin{enumerate}
\item We have tested the resolution of our sampling set
by projecting a solution we obtained from the algorithm described in Section \ref{sec:allaire_alg} onto our sampling grid. The loss in the objective function value we observed was typically very small.
\item We cannot guarantee that we find the best point in the grid using our hierarchical grid approach in general. However, in our experiments we observed that the function value for the points we find were either optimal (on the full grid) or at least correct up to at least 5 digits. 
\item Without globalization, the iterative solution strategy we described above for the solution of problem (\ref{eq:fmo_hashin_ortho_nested}) is not guaranteed to converge. A remedy is to add a line-search step after each solution of a sub problem of type (\ref{eq:fmo_hashin_ortho_separable}). However our experiments revealed that for the compliance minimization problems we solve in this article no globalization is needed. Thus, we omit the description here.
\end{enumerate}
\end{remark}


\subsection{Single-loadcase via alternating minimization and sequential laminates}\label{sec:allaire_alg}

After introducing the bound-based HS-FOMO models and the SGP solver, we now present the single-loadcase laminate-based alternating minimization baseline used in Section~\ref{sec:results}. Unlike the primal strain-energy formulation used above, this method updates material through the dual Hashin--Shtrikman complementary-energy envelope using explicit continuum minimizers of the local material subproblem; in 2D and single-loadcase, these minimizers are orthotropic sequential laminates of rank one or two. The complementary-energy envelope is dual to the primal HS envelope, so this viewpoint is consistent with the continuum tightness statement in Theorem~\ref{th:tight}. For completeness, we recall only the formulas needed for implementation; supporting derivations are deferred to Appendix~\ref{appendix:rederivation_burzain}.

\begin{definition}[Optimal lower bound on complementary energy {\citep[Definition~2.3.15]{Allaire2002}}]
    Let $\Emin \prec \Emax$ be two well-ordered isotropic elasticity tensors and let $v^+\in[0,1]$ be the volume fraction of $\Emax$. Denote by $G_{v^+}$ the set of all effective elasticity tensors $\Estar$ obtainable by homogenization of the two phases $(\Emin,\Emax)$ with volume fraction $v^+$.
    For all $\stress\in\mathbb{S}^{2\times 2}$, the HS lower bound of complementary energy holds:
    $\langle {\Estar}^{-1}\stress,\stress\rangle\geq f_{\mathrm{c}}^{\mathrm{HS}}(\stress;v^+)$, where 
    \begin{equation}
        f_{\mathrm{c}}^{\mathrm{HS}}(\stress;v^+):=\langle (\Emax)^{-1}\stress,\stress\rangle+(1-v^+)\max_{\strain\in\mathbb{S}^{d\times d}}\left\{ 2\langle\stress,\strain\rangle
        - \left\langle\left((\Emin)^{-1}-(\Emax)^{-1}\right)^{-1}\strain,\strain\right\rangle - v^+\, g^{\mathrm{c}}(\strain)\right\}
    \end{equation}
    Here, $g^\mathrm{c}(\strain)$ is the nonlocal term
    $g^{\mathrm{c}}(\strain):=\max_{\lVert\tens{e}\rVert=1}\langle \tensf{F}^{\mathrm{c}}_{\Emax}(\tens{e})\strain,\strain\rangle$. For each lamination direction $\lVert\tens{e}\rVert=1$, the tensor $\tensf{F}^{\mathrm{c}}_{\Emax}(\tens{e})\in\mathbb{R}^{d\times d\times d\times d}$ satisfies
    \begin{equation}
         \langle \tensf{F}_{\Emax}^\mathrm{c}(\tens{e})\strain,\strain\rangle = \langle \Emax\strain,\strain\rangle-\frac{1}{\mu^+}\lVert(\Emax\strain)\tens{e}\rVert^2+
    \left(\frac{1}{\mu^+} - \frac{1}{\kappa^+ + 2 \mu^+ (d - 1)/d}\right)
    \left\langle (\Emax\strain)\tens{e},\, \tens{e}\right\rangle^2. 
    \end{equation}
    %
\end{definition}

The quantity $f_{\mathrm{c}}^{\mathrm{HS}}(\stress;v^+)$ is the local envelope minimized in the material step of alternating minimization. For two-dimensional mixtures of well-ordered isotropic phases, this lower bound is tight in the relaxation sense, and equality is attained by finite-rank laminates (cf. \citep[Proposition~2.3.25]{Allaire2002} and \citep[Theorem~2]{Burazin2021}). We now state the explicit branch-wise formula used in the algorithm.

\begin{theorem}[Explicit formula of $f_{\mathrm{c}}^{\mathrm{HS}}$ in 2D {\citep[Theorem~2]{Burazin2021}}]\label{th:burazin_explicit_fc}
Assume $d=2$, $0 \prec \Emin\prec \Emax$, and denote by $\sigma_1,\sigma_2$ the eigenvalues of $\stress$. Let
\begin{subequations}
\begin{align}
\Delta\kappa&:=\kappa^+-\kappa^-,\qquad
\Delta\mu:=\mu^+-\mu^-,\\
s_+&:=\lvert\sigma_1+\sigma_2\rvert,\qquad
s_-:=\lvert\sigma_1-\sigma_2\rvert,\\
d_1^{\mathrm c}(v^+)&:=(1-v^+)\kappa^-\mu^-(\kappa^++\mu^+)+v^+\kappa^+\mu^+(\kappa^-+\mu^-),\\
d_2^{\mathrm c}(v^+)&:=\kappa^-(\kappa^++\mu^+)+v^+\mu^+\Delta\kappa,\qquad
d_3^{\mathrm c}(v^+):=\mu^-(\kappa^++\mu^+)+v^+\kappa^+\Delta\mu,\\
a(\stress)&:=\kappa^-\kappa^+\Delta\mu\,s_-+\mu^-\mu^+\Delta\kappa\,s_+.
\end{align}
\end{subequations}
Define
\begin{subequations}
\begin{align}
f_{\mathrm c,1}^{\mathrm{HS}}(\stress;v^+)
&:=(1-v^+)\langle (\Emin)^{-1}\stress,\stress\rangle+v^+\langle (\Emax)^{-1}\stress,\stress\rangle
-\frac{v^+(1-v^+)a(\stress)^2}{4\kappa^-\kappa^+\mu^-\mu^+\,d_1^{\mathrm c}(v^+)},\\
f_{\mathrm c,2}^{\mathrm{HS}}(\stress;v^+)
&:=\langle (\Emax)^{-1}\stress,\stress\rangle
+\frac{(1-v^+)\Delta\kappa(\kappa^++\mu^+)(\sigma_1+\sigma_2)^2}{4\kappa^+\,d_2^{\mathrm c}(v^+)},\\
f_{\mathrm c,3}^{\mathrm{HS}}(\stress;v^+)
&:=\langle (\Emax)^{-1}\stress,\stress\rangle
+\frac{(1-v^+)\Delta\mu(\kappa^++\mu^+)(\sigma_1-\sigma_2)^2}{4\mu^+\,d_3^{\mathrm c}(v^+)}.
\end{align}
\end{subequations}
Then
\begin{equation}\label{eq:burazin_explicit_fc}
f_{\mathrm{c}}^{\mathrm{HS}}(\stress;v^+)=
\begin{cases}
f_{\mathrm c,1}^{\mathrm{HS}}(\stress;v^+),& \text{if }
v^+\mu^+\Delta\kappa\,s_+<d_2^{\mathrm c}(v^+)\,s_- \ \text{and}\ v^+\kappa^+\Delta\mu\,s_-<d_3^{\mathrm c}(v^+)\,s_+,\\
f_{\mathrm c,2}^{\mathrm{HS}}(\stress;v^+),& \text{if }
v^+\mu^+\Delta\kappa\,s_+\ge d_2^{\mathrm c}(v^+)\,s_-,\\
f_{\mathrm c,3}^{\mathrm{HS}}(\stress;v^+),& \text{if }
v^+\kappa^+\Delta\mu\,s_-\ge d_3^{\mathrm c}(v^+)\,s_+.
\end{cases}
\end{equation}
Branch-wise attainability is by finite-rank laminates:
\begin{enumerate}
\item In case $f_{\mathrm c,1}^{\mathrm{HS}}$, a simple laminate attains the bound, with lamination direction orthogonal to a principal direction of $\stress$.
\item In case $f_{\mathrm c,2}^{\mathrm{HS}}$, a rank-two sequential laminate attains the bound with principal directions of $\stress$ and
\begin{equation}
m_1=\frac12+\frac{d_2^{\mathrm c}(v^+)(\sigma_2-\sigma_1)}{2v^+\mu^+\Delta\kappa(\sigma_1+\sigma_2)},
\qquad
m_2=\frac12+\frac{d_2^{\mathrm c}(v^+)(\sigma_1-\sigma_2)}{2v^+\mu^+\Delta\kappa(\sigma_1+\sigma_2)}.
\end{equation}
\item In case $f_{\mathrm c,3}^{\mathrm{HS}}$, a rank-two sequential laminate attains the bound with principal directions of $\stress$ and
\begin{equation}
m_1=\frac12+\frac{d_3^{\mathrm c}(v^+)(\sigma_1+\sigma_2)}{2v^+\kappa^+\Delta\mu(\sigma_2-\sigma_1)},
\qquad
m_2=\frac12+\frac{d_3^{\mathrm c}(v^+)(\sigma_1+\sigma_2)}{2v^+\kappa^+\Delta\mu(\sigma_1-\sigma_2)}.
\end{equation}
\end{enumerate}
\end{theorem}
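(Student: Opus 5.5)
The plan is to compute the Fenchel conjugate $f_{\mathrm c}^{\mathrm{HS}}(\cdot;v^+)$ directly from the definition as a maximization over $\strain$. The first step is to invoke isotropy. Both $\Emin,\Emax$ and $g^{\mathrm c}$ are invariant under rotations. Hence $f_{\mathrm c}^{\mathrm{HS}}(\stress;v^+)$ depends on $\stress$ only through its eigenvalues, and one may assume $\stress=\mathrm{diag}(\sigma_1,\sigma_2)$. A maximizer $\strain$ can then be sought among diagonal strains: a symmetrization argument analogous to Proposition~\ref{prop:zo_orthotropic_representative} (averaging with the reflection $\semtrx M$) shows the concave objective does not decrease. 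Write $\strain=\mathrm{diag}(\varepsilon_1,\varepsilon_2)$, or equivalently work with the bulk part $e_+=\varepsilon_1+\varepsilon_2$ and the deviatoric part $e_-=\varepsilon_1-\varepsilon_2$.

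The second step is to evaluate the nonlocal term $g^{\mathrm c}(\strain)=\max_{\lVert\tens e\rVert=1}\langle \tensf F^{\mathrm c}_{\Emax}(\tens e)\strain,\strain\rangle$ explicitly. Set $\tens e=(\cos\theta,\sin\theta)$ and $\tau=\Emax\strain=\mathrm{diag}(\tau_1,\tau_2)$. Then $\lVert\tau\tens e\rVert^2=\tau_1^2\cos^2\theta+\tau_2^2\sin^2\theta$ and $\langle\tau\tens e,\tens e\rangle=\tau_1\cos^2\theta+\tau_2\sin^2\theta$. The expression is therefore a concave quadratic in $c=\cos^2\theta\in[0,1]$. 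Its maximizer is either interior or at the endpoints $c\in\{0,1\}$, and this is exactly where the trichotomy enters:
\begin{itemize}
\item an interior optimum gives a closed form of $g^{\mathrm c}$ in terms of $e_+,e_-$ (at most quadratic), which should produce branch~1;
\item the endpoint optima correspond to a lamination direction aligned with a principal axis, which should produce branches~2 and~3.
\end{itemize}
The third step is the outer maximization over $(e_+,e_-)$ of $2(\sigma_1\varepsilon_1+\sigma_2\varepsilon_2)$ minus the positive definite quadratic $\langle(\Emin^{-1}-\Emax^{-1})^{-1}\strain,\strain\rangle$ minus $v^+g^{\mathrm c}$. Within each regime this is an unconstrained concave quadratic, so the maximizer is found from a $2\times2$ linear system. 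The resulting maximum value is a quadratic form in $(\sigma_1+\sigma_2,\sigma_1-\sigma_2)$. Adding $\langle\Emax^{-1}\stress,\stress\rangle$ and multiplying by $(1-v^+)$, I expect to recover $f_{\mathrm c,1}^{\mathrm{HS}}$, $f_{\mathrm c,2}^{\mathrm{HS}}$ and $f_{\mathrm c,3}^{\mathrm{HS}}$ after simplification with $d_1^{\mathrm c},d_2^{\mathrm c},d_3^{\mathrm c}$. The branch conditions come from requiring that the optimal $c$ for the computed maximizer lies in $(0,1)$, or respectively on the boundary. Written in terms of $\stress$, these become the inequalities comparing $v^+\mu^+\Delta\kappa\,s_+$ with $d_2^{\mathrm c}s_-$, and $v^+\kappa^+\Delta\mu\,s_-$ with $d_3^{\mathrm c}s_+$. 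An alternative cross-check is to conjugate the primal formula: since $f_{\mathrm c}^{\mathrm{HS}}$ is the Fenchel conjugate of $f^{\mathrm{HS}}$ per \eqref{eq:gak_def} and Theorem~\ref{th:tight}, I can conjugate $f^{\mathrm{HS}}$ branchwise using \eqref{eq:q} and compare.

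For attainability, I would use the classical formula for the complementary energy of a rank-$p$ sequential laminate with core $\Emin$ and matrix $\Emax$:
\[
\langle\Estar^{-1}\stress,\stress\rangle=\langle\Emax^{-1}\stress,\stress\rangle+(1-v^+)\max_{\strain}\Big\{2\langle\stress,\strain\rangle-\langle(\Emin^{-1}-\Emax^{-1})^{-1}\strain,\strain\rangle-v^+\textstyle\sum_i m_i\langle\tensf F^{\mathrm c}_{\Emax}(\tens e_i)\strain,\strain\rangle\Big\}.
\]
Equality with the bound holds exactly when the weights $m_i$ are supported on maximizing directions of $g^{\mathrm c}$ at the optimal $\strain$. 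Taking $\tens e_1,\tens e_2$ along the principal axes and the $m_i$ from the optimality (first-order) condition of the outer problem should give the stated formulas for $m_1,m_2$. In branch~1 a single direction suffices, which gives the simple laminate. The conditions $m_i\in[0,1]$ should coincide with the branch inequalities.

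The main obstacle I expect is the bookkeeping in step three. Matching the branch boundaries exactly and simplifying to the stated denominators (especially $d_1^{\mathrm c}$ and $a(\stress)$) is error-prone. I would first verify the formulas at the degenerate points $v^+\in\{0,1\}$ and at pure hydrostatic and pure deviatoric stresses before attempting the general algebra.
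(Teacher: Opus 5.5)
Your reduction to diagonal $\stress,\strain$ by symmetrization and your laminate complementary-energy formula are fine. Step~2, however, has a sign error, and it removes the mechanism you use to produce the three branches. With $\strain=\mathrm{diag}(\varepsilon_1,\varepsilon_2)$, $\tau=\Emax\strain$ and $c=\cos^2\theta$, the term $\lVert\tau\tens{e}\rVert^2$ is affine in $c$. The coefficient of $\langle\tau\tens{e},\tens{e}\rangle^2$ is $\frac{1}{\mu^+}-\frac{1}{\kappa^++\mu^+}=\frac{\kappa^+}{\mu^+(\kappa^++\mu^+)}>0$.

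Hence $c\mapsto\langle\tensf{F}^{\mathrm c}_{\Emax}(\tens{e})\strain,\strain\rangle$ is a \emph{convex} quadratic, with leading coefficient $\frac{\kappa^+}{\mu^+(\kappa^++\mu^+)}(\tau_1-\tau_2)^2\ge 0$. Its maximum over $[0,1]$ is therefore always at $c\in\{0,1\}$. Evaluating there gives $\frac{4\kappa^+\mu^+}{\kappa^++\mu^+}\varepsilon_2^2$ at $\tens{e}=\tens{e}_1$ and $\frac{4\kappa^+\mu^+}{\kappa^++\mu^+}\varepsilon_1^2$ at $\tens{e}=\tens{e}_2$. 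So $g^{\mathrm c}(\strain)=\frac{4\kappa^+\mu^+}{\kappa^++\mu^+}\max\{\varepsilon_1^2,\varepsilon_2^2\}$, which is the Allaire--Kohn formula the paper records in its appendix. (The paper does not reprove this theorem; it cites Burazin et al.)

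Consequently there is no interior optimum producing branch~1. Such an optimum would even give an off-axis lamination direction, contradicting item~1 of the statement. Likewise, branches~2 and~3 are not ``endpoint'' cases in $c$.

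The trichotomy actually comes from the nonsmoothness of $g^{\mathrm c}$ in the outer problem, and your Step~3 misses it. In your variables, $\max\{\varepsilon_1^2,\varepsilon_2^2\}=\frac14(\lvert e_+\rvert+\lvert e_-\rvert)^2$, and the outer objective becomes
\[
(\sigma_1+\sigma_2)e_++(\sigma_1-\sigma_2)e_--\kappa_R e_+^2-\mu_R e_-^2-\beta\bigl(\lvert e_+\rvert+\lvert e_-\rvert\bigr)^2 .
\]
Here $\kappa_R=\kappa^-\kappa^+/\Delta\kappa$ and $\mu_R=\mu^-\mu^+/\Delta\mu$ are the moduli of $((\Emin)^{-1}-(\Emax)^{-1})^{-1}$, and $\beta=v^+\kappa^+\mu^+/(\kappa^++\mu^+)$. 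Taking the signs of $e_\pm$ equal to those of $\sigma_1\pm\sigma_2$ reduces this to a smooth concave QP on a quadrant, with linear coefficients $s_+,s_-$. The three branches are its three active-set cases:
\begin{enumerate}
\item Branch~1 is the interior stationary point, which is your $2\times 2$ system.
\item Branch~2 is a maximizer on the face $e_-=0$ (hydrostatic strain).
\item Branch~3 is a maximizer on the face $e_+=0$ (deviatoric strain).
\end{enumerate}
The face maximizers are not stationary points of any smooth piece, so an ``unconstrained concave quadratic within each regime'' never produces them.

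For example, on $e_-=0$ one gets $e_+=s_+/(2(\kappa_R+\beta))$ and the KKT condition $s_-\le 2\beta e_+$. Since $\kappa_R+\beta=\kappa^+d_2^{\mathrm c}/(\Delta\kappa(\kappa^++\mu^+))$, this condition is exactly $d_2^{\mathrm c}s_-\le v^+\mu^+\Delta\kappa\,s_+$, and the optimal value yields $f^{\mathrm{HS}}_{\mathrm c,2}$. Both principal directions are then active in $g^{\mathrm c}$. The first-order condition $\stress=\tensf{R}\strain+v^+\sum_i m_i\tensf{F}^{\mathrm c}_{\Emax}(\tens{e}_i)\strain$ splits the subgradient between them and gives $m_2-m_1=\frac{d_2^{\mathrm c}(\sigma_1-\sigma_2)}{v^+\mu^+\Delta\kappa(\sigma_1+\sigma_2)}$. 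Hence $m_i\in[0,1]$ is precisely the branch inequality.

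With this correction, your attainability argument goes through: one direction is active in branch~1 (a simple laminate normal to a principal axis), and two are active in branches~2--3 (rank-two laminates).
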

Figure~\ref{fig:hashin_laminates} illustrates the geometric relation between the HS bound and its laminate realizers. The corresponding branch-wise laminate parameters also provide a closed-form local tensor update: for each element $i$, let $\tens{e}_{i,j}$ and $m_{i,j}$ denote the lamination directions and weights, with $p_i=1$ in case 1 and $p_i=2$ in cases 2--3. Define
\begin{figure}[!b]
    \includegraphics[width=\linewidth]{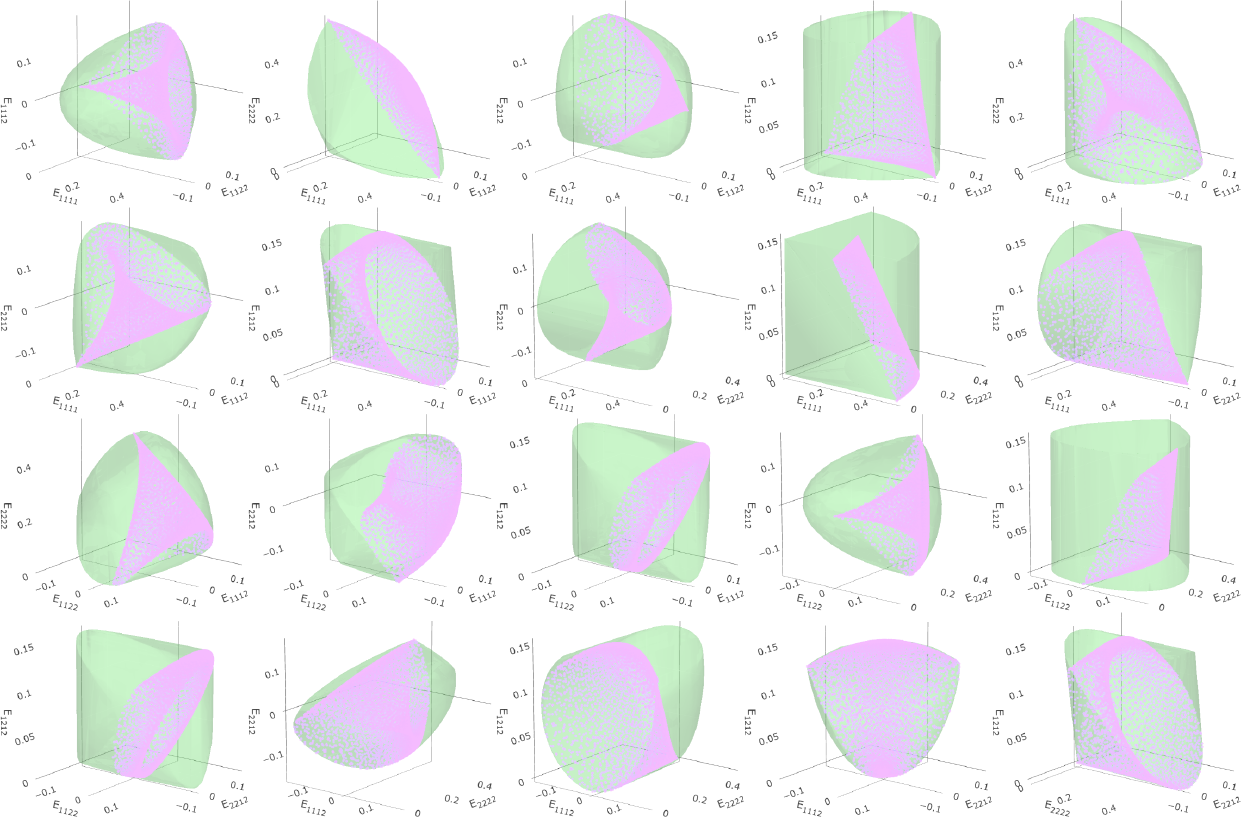}
    \caption{
        Comparison of the admissible set $\mathcal A^{(2)}(0.5)$ (Hashin--Shtrikman, green boundary) and orthotropic sequential-laminate tensors that attain the Hashin--Shtrikman energy bound (pink points). The sets are shown in a selected three-dimensional projection of $\tens{E}$. For fixed $v^+=0.5$, the Hashin--Shtrikman boundary was generated using $750$ sampled strains $\strain$ with Frobenius norm $\sqrt{2}/2$. The realizing laminate tensors were generated by sampling $5{,}000$ stresses $\stress$. Material parameters: $\kappa^- = 0.714\times10^{-9}$, $\kappa^+ = 0.714$, $\mu^- = 0.385\times10^{-9}$, and $\mu^+ = 0.385$.
    }
    \label{fig:hashin_laminates}
\end{figure}
\begin{equation}
    \tensf{R}:=\left((\Emin)^{-1}-(\Emax)^{-1}\right)^{-1},
    \qquad
    \tensf{B}_i:=\tensf{R}+v_i^+\sum_{j=1}^{p_i} m_{i,j}\,\tensf{F}_{\Emax}^{\mathrm c}(\tens{e}_{i,j}),
\end{equation}
and set
\begin{equation}
    \Ei=\left((\Emax)^{-1}+(1-v_i^+)\,\tensf{B}_i^{-1}\right)^{-1}.
\end{equation}
This defines the \textsc{LaminateUpdate} map used in Algorithm~\ref{algo:allaire_alt_main}.

\begin{algorithm}
    \caption{Laminate-based alternating minimization with bisection on $\lambda$}
    \label{algo:allaire_alt_main}
    \KwIn{Initial $\{v_i^{+,0}\}_{i=1}^{\nel}$, $\{\Ei^{0}\}_{i=1}^{\nel}$, target volume fraction $\overline{V}$, tolerance $\mathrm{tol}$.}
    \For{$k=0,1,2,\dots$}{
      Solve state equation for $\sevek{u}^k$ and compute $\{\stress_i^k\}$\;
    	  Find $\lambda$ (by bisection) such that, with
    	  \[
    	    v_i^+(\lambda)\in\argmin_{v\in[0,1]} f_{\mathrm{c}}^{\mathrm{HS}}(\stress_i^k;v)+\lambda v,
    	    \qquad
    	    \Ei(\lambda)=\textsc{LaminateUpdate}(\stress_i^k,v_i^+(\lambda)),
    	  \]
    	  the volume constraint $\frac{1}{\nel}\sum_{i=1}^{\nel} v_i^+(\lambda)=\overline{V}$ holds\;
    	  Set $v_i^{+,k+1}\gets v_i^+(\lambda)$ and $\Ei^{k+1}\gets \Ei(\lambda)$ for all $i$\;
    	}
\end{algorithm}

Although one-dimensional, the local update problem
$\argmin_{v\in[0,1]} f_{\mathrm{c}}^{\mathrm{HS}}(\stress_i;v)+\lambda v$
is not immediate. Regularity information on the HS lower bound $f_{\mathrm{c}}^{\mathrm{HS}}$ is therefore needed.
\begin{proposition}[Regularity and convexity of the HS complementary-energy bound]
    For all $\stress\in\mathbb{S}^{2\times 2}$, the function $v\mapsto f_{\mathrm{c}}^{\mathrm{HS}}(\stress;v)$ is $C^1$ and convex over $(0,1]$.
\end{proposition}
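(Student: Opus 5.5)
We would prove convexity and $C^1$ regularity separately, treating convexity structurally and regularity through the explicit formula of Theorem~\ref{th:burazin_explicit_fc}.

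\textbf{Convexity.} The plan is to avoid the branch formulas and use the variational definition. Write
\begin{equation*}
f_{\mathrm c}^{\mathrm{HS}}(\stress;v)=\langle(\Emax)^{-1}\stress,\stress\rangle+\max_{\strain}\Big\{(1-v)\big(2\langle\stress,\strain\rangle-\langle\tensf R\strain,\strain\rangle\big)-v(1-v)g^{\mathrm c}(\strain)\Big\}.
\end{equation*}
The coefficient $-v(1-v)$ of $g^{\mathrm c}$ is not affine in $v$, so the maximum is not directly a supremum of affine functions of $v$. We therefore substitute $\strain=\tilde{\strain}/(1-v)$ for $v<1$. The bracket becomes
\begin{equation*}
2\langle\stress,\tilde{\strain}\rangle-\frac{\langle\tensf R\tilde{\strain},\tilde{\strain}\rangle}{1-v}-\frac{v}{1-v}\,g^{\mathrm c}(\tilde{\strain}).
\end{equation*}
Since $g^{\mathrm c}$ is $2$-homogeneous, this equals
\begin{equation*}
2\langle\stress,\tilde{\strain}\rangle-\langle\tensf R\tilde{\strain},\tilde{\strain}\rangle-\frac{v}{1-v}\big(\langle\tensf R\tilde{\strain},\tilde{\strain}\rangle+g^{\mathrm c}(\tilde{\strain})\big).
\end{equation*}
Writing $v/(1-v)=-1+1/(1-v)$, the dependence on $v$ is $-h(\tilde{\strain})/(1-v)$ plus a $v$-independent term, where $h:=\langle\tensf R\tilde{\strain},\tilde{\strain}\rangle+g^{\mathrm c}(\tilde{\strain})$.

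The key check is $h\ge0$. This follows because $\langle\tensf F^{\mathrm c}_{\Emax}(\tens e)\strain,\strain\rangle\ge0$, since $\tensf F^{\mathrm c}$ is positive semidefinite as a projection-type quantity, and $\tensf R\succ0$. Then $v\mapsto -h/(1-v)$ is concave in $v$, which goes the wrong way for a maximum. So instead we use the sign structure: for each fixed $\tilde{\strain}$, the bracket is an affine function of $v$ minus $v(1-v)g^{\mathrm c}$. Because this parametrisation fights us, we fall back on the paper's explicit route below and keep this substitution only as a heuristic.

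\textbf{Explicit route.} For fixed $\stress$, the branch conditions in \eqref{eq:burazin_explicit_fc} are monotone in $v$. The left sides $v\mu^+\Delta\kappa s_+$ and $v\kappa^+\Delta\mu s_-$ are linear with positive slope exceeding that of the affine right sides $d_2^{\mathrm c}(v)s_-$ and $d_3^{\mathrm c}(v)s_+$. Hence $(0,1]$ splits into at most two intervals: branch 1 holds on $(0,v_*)$, and branch 2 or branch 3 holds on $[v_*,1]$, where $v_*$ solves the equality of the active condition.

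The convexity checks on each interval are as follows.
\begin{itemize}
\item On branches 2 and 3, the $v$-dependence is $(1-v)/d^{\mathrm c}_k(v)$, with $d^{\mathrm c}_k$ affine and positive. This is a linear-fractional function, and its second derivative is $2\,\partial_v d_k^{\mathrm c}\,(\partial_v d_k^{\mathrm c}+d_k^{\mathrm c}(0))/(d_k^{\mathrm c})^3\ge0$. Equivalently, $(1-v)/d$ with $d$ affine and positive has the form $a+b/d$ with $b\ge0$, which we verify by direct computation.
\item On branch 1, the $v$-dependence is an affine Reuss part plus $-v(1-v)a^2/(C\,d_1^{\mathrm c}(v))$. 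Here $d_1^{\mathrm c}$ is affine and positive, so $-v(1-v)/d_1$ decomposes by polynomial division into a linear part plus $\gamma/d_1(v)$. The sign of $\gamma$ is computed from the values $d_1(0)=\kappa^-\mu^-(\kappa^++\mu^+)$ and $d_1(1)=\kappa^+\mu^+(\kappa^-+\mu^-)$. Since $v(1-v)$ vanishes at the roots, $\gamma$ reduces to the product of the two points $v_0=-d_1(0)/d_1'$ evaluated in $v(1-v)$. We expect $v_0\notin[0,1]$, which gives $v_0(1-v_0)<0$ and hence convexity.
\end{itemize}

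\textbf{$C^1$ across $v_*$ (main obstacle).} At $v_*$ we must show that the values and the $v$-derivatives of the adjacent branches agree. Value continuity follows from continuity of the envelope in $v$ (by the max-representation with a compact optimiser). Derivative matching we would obtain by an envelope (Danskin) argument. On $(0,1)$ the maximiser $\strain^*(v)$ of the definition is unique, because the objective is strictly concave: $\tensf R\succ0$ and $g^{\mathrm c}$ is convex as a maximum of quadratics with psd forms. The derivative is then $-\langle\stress,\strain^*\rangle+\ldots$ evaluated at $\strain^*(v)$, with the $v$-dependence through the explicit terms only, and this is continuous since $\strain^*$ depends continuously on $v$.

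Global convexity on $(0,1]$ then follows from piecewise convexity together with $C^1$ matching at $v_*$, because the derivative is nondecreasing on each piece and continuous at the junction. The delicate points are the sign of $\gamma$ in branch 1 and justifying uniqueness of $\strain^*$ near $v\to1$. We would treat $v=1$ separately: $f_{\mathrm c}^{\mathrm{HS}}$ is smooth there from branch 2 or 3.
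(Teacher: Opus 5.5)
Your proof is correct, but it reaches convexity by a different route than the paper. The paper never uses the branch formulas. It dualizes the inner maximum into $\Psi(v)=\inf_{\tenss{\tau}}\{\langle\tensf{R}^{-1}(\stress-\tenss{\tau}),\stress-\tenss{\tau}\rangle+[v g^{\mathrm c}]^*(2\tenss{\tau})\}$. The conjugate term is a perspective, so it is jointly convex in $(\tenss{\tau},v)$ and nonincreasing in $v$, which makes $\Psi$ convex and nonincreasing. The paper then checks convexity of $(1-v)\Psi(v)$ directly, the defect being $\theta(1-\theta)(v_2-v_1)(\Psi(v_1)-\Psi(v_2))\ge0$. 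Differentiability comes from the Bonnans--Shapiro Danskin theorem applied to the $\tenss{\tau}$-problem.

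You instead prove convexity branch by branch from Theorem~\ref{th:burazin_explicit_fc}. Your linear-fractional computations are right, and the sign of $\gamma$ is not in doubt: $d_1^{\mathrm c}(0),d_1^{\mathrm c}(1)>0$ puts the root $v_0$ outside $[0,1]$, so $\gamma=-v_0(1-v_0)>0$. You then glue the pieces at $v_*$ with a global $C^1$ property from Danskin applied to the primal maximization over $\strain$, and that step is sound. Write $M(v):=\max_{\strain}\{2\langle\stress,\strain\rangle-\langle\tensf{R}\strain,\strain\rangle-v\,g^{\mathrm c}(\strain)\}$. For every $v\ge0$ the objective is strictly concave in $\strain$, since $\tensf{R}\succ0$ and $g^{\mathrm c}\ge0$ is convex. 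All maximizers lie in the ball $\lVert\strain\rVert_F\le 2\lVert\stress\rVert_F/\lambda_{\min}(\tensf{R})$. One gets $D_v f_{\mathrm c}^{\mathrm{HS}}(\stress;v)=-M(v)-(1-v)\,g^{\mathrm c}(\strain^*(v))$, which is continuous in $v$; this is the correct formula, not the ``$-\langle\stress,\strain^*\rangle+\ldots$'' you wrote.

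Compared with the paper, your regularity step is simpler: no conjugate, no perspective, and no degeneration as $v\to0$. Your convexity step is more computational and leans on the explicit formula and the branch structure. In fact your Danskin setup already contains the paper's structural argument. For fixed $\strain$ the inner objective is affine in $v$, so $M$ is a supremum of affine functions, hence convex, and it is nonincreasing because $g^{\mathrm c}\ge0$ (indeed $M=\Psi$). Your substitution heuristic failed only because you tried to write $(1-v)M(v)$ as a single supremum. The paper's product estimate applied to $M$ finishes convexity without any branch computation.

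There are two minor inaccuracies, neither fatal. First, the branch-2 condition is equivalent to $v\mu^+\Delta\kappa(s_+-s_-)\ge\kappa^-(\kappa^++\mu^+)s_-$. For $\stress\neq0$ it therefore defines a set $\{v\ge v_*\}$ only when $s_+>s_-$, and it is empty otherwise; branch 3 behaves the same way with $s_->s_+$. This still gives your two-interval structure, but your ``slope exceeding'' claim is not true in general. Second, $v=1$ need not lie in branch 2 or 3: a uniaxial stress ($s_+=s_-$) stays in branch 1 for all $v\in(0,1]$. This is harmless, because every branch formula is smooth up to $v=1$ and your Danskin argument covers $v=1$ anyway. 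For the same reason, uniqueness of $\strain^*$ near $v=1$ is not a delicate point.
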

\begin{proof}
    See Appendix~\ref{appendix:rederivation_burzain}. The derivative $D_vf_{\mathrm{c}}^{\mathrm{HS}}(\stress;v)$ follows from basic perturbation theory for optimization problems.
\end{proof}
Accordingly, each local update is computed by bisection on the one-dimensional optimality condition
$D_vf_{\mathrm{c}}^{\mathrm{HS}}(\stress;v) = -\lambda$.
Algorithm~\ref{algo:allaire_alt_main} is a descent method: compliance decreases along iterations, so the objective values converge. There is no guarantee of convergence to a global minimizer, but \citet{allaire1997shape} reported low sensitivity to initialization and similar final compliance values.

\section{Results}\label{sec:results}

Building on the theoretical developments of the previous sections, we present numerical examples that illustrate the hierarchy of bounds and evaluate the performance of the sequential global programming algorithm for free-orthotropic material optimization. We compare these results for ZO-FMO (Section \ref{sec:fmo_zero}), V-FMO (Section \ref{sec:fmo_voigt}), and finite-rank laminates computed by alternating minimization (Section \ref{sec:allaire_alg}).

In all examples, we use dimensionless material parameters with strong phase Young’s modulus $E=1$ and Poisson's ratio $\nu=0.3$. The weak phase has the same Poisson ratio and a smaller Young modulus ($10^{-2}$, $10^{-3}$ or $10^{-6}$). The volume constraint is fixed at $\overline{V}=0.2$. The finite‑element discretization is based on Q$8$ serendipity elements. Since the elasticity tensors are piecewise constant at the element level and the element kinematics are sufficiently rich, no additional regularization is applied, in line with the observation of \citet{Jog1994}. 

For ZO-FMO and V-FMO, the resulting linear semidefinite programs \eqref{eq:fmo} and \eqref{eq:fmo_voigtVTS} are solved using the Mosek optimizer with default settings, and we implement elementwise arrow decomposition of the compliance constraints for improved scalability \citep{Kocvara2020}.

For the sequential global programming algorithm we stop if the relative change in the merit function 
\begin{equation}\label{eq:merit SGP}
    \sum_{j=1}^{\nlc} c_j\left(\{\Ei\}\right) + \lambda\left( \frac{1}{\nel}\sum_{i=1}^{\nel} \sup_{t \in [0,1]} \hat{v}(t;\Ei^{\mathrm{b}}) - \overline{V}\right)
\end{equation}
is below $10^{-7}$ or there is no progress after five consecutive iterations. We note that the second term is not automatically zero after every iteration. Due to the finite-resolution approximation of the feasible set used in the SGP approach, the precision of the volume constraint retrieved from the bisection strategy is typically on the order of $10^{-6}$. This is the reason why we monitor the merit function \eqref{eq:merit SGP} rather than pure compliance. Moreover, the same reason may lead to minimal under- and over-shooting close to the solution. To avoid cycling, we thus include the additional improvement test.

In the alternating‑minimization scheme of \citet{allaire1997shape}, orthotropic finite‑rank laminates are designed elementwise from average stresses \citep{Jog1994}; the volume fraction update follows the compliance optimality condition, and we stop when (i) the maximum density change is below $10^{-5}$ and (ii) the relative compliance change is below $10^{-8}$. The method is initialized with the strong material $\Emax$. 

\subsection{Cantilever problem}

\begin{figure}
    \centering
    \includegraphics{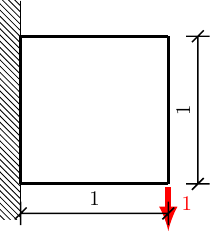}
    \caption{Cantilever beam: geometry and boundary conditions}
    \label{fig:cantilever_bc}
\end{figure}

As the first example, we consider a cantilever beam of the width $1$ and height $1$, clamped at the left edge and subjected to a vertical load of magnitude $1$ at the bottom right corner, as shown in Fig.~\ref{fig:cantilever_bc}. The beam is discretized using $30\times30$ serendipity elements. We solve the optimization problem using the different bounds for three different values of the weak phase Young's modulus: $10^{-2}$, $10^{-3}$, and $10^{-6}$. 

\subsubsection{Phase contrast \texorpdfstring{$10^{-6}$}{10e-6}}

For the $10^{-6}$ case, the ZO-FMO \eqref{eq:fmo} yields a compliance lower bound of $18.978$ with a volume-constraint multiplier $\lambda = 15.038$. The corresponding material distribution and elasticity tensors are shown in the first column of Fig.~\ref{fig:cantilever_1e-06}. The tensors are visualized by rosette plots, i.e., polar plots of the directional strain energy $\langle \tensf{E}\strain,\strain\rangle$ for $\lVert\strain\rVert_F = 1$. As seen from these plots, the tensors are nearly orthotropic, although orthotropy is not enforced in this formulation.

The V-FMO \eqref{eq:fmo_voigtVTS} gives a substantially tighter compliance lower bound of $39.843$, and the volume multiplier increases to $\lambda = 165.438$. The increase in both compliance and $\lambda$ reflects the tighter coupling between elasticity tensors and element-wise volume estimates; consequently, a larger $\lambda$ is required to satisfy the volume constraint. This is also reflected in the material distribution: the resulting isotropic tensors are more localized than in the ZO-FMO case (compare first and second columns of Fig.~\ref{fig:cantilever_1e-06}).

\begin{figure}[!b]
\centering
\includegraphics[width=\linewidth]{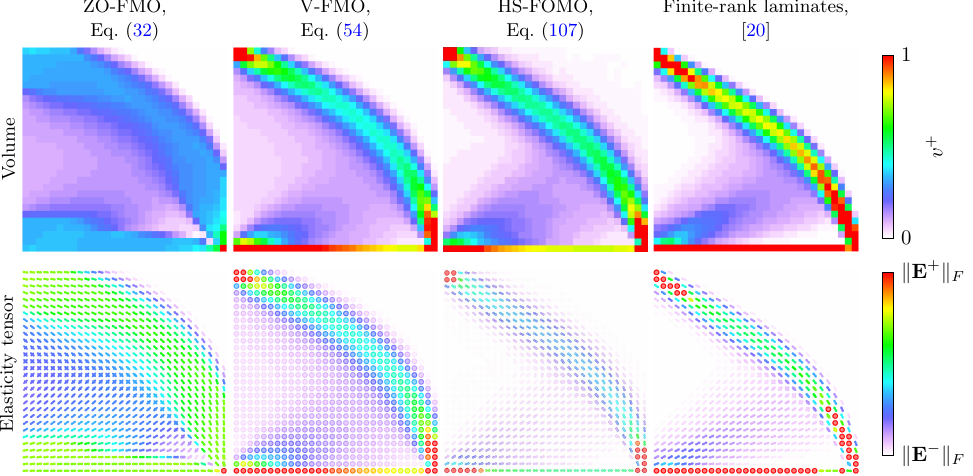}
\caption{Cantilever beam optimization for $\Emin = 10^{-6}\Emax$: volume fraction (top row) and elasticity tensor rosettes (bottom row) for ZO-FMO, V-FMO, HS‑FOMO, and sequential laminate designs.}
\label{fig:cantilever_1e-06}
\end{figure}

Solving the HS-FOMO problem \eqref{eq:fmo_hashin_ortho_nested} with the sequential global programming algorithm yields a compliance of $43.152$ with $\lambda = 193.117$, so the objective increases by only $8.31\%$ relative to V-FMO. The multiplier $\lambda$ likewise increases only moderately. The relative-volume distribution is very similar to the V-FMO case (compare the second and third columns of Fig.~\ref{fig:cantilever_1e-06}), but the tensors are now orthotropic and have distinguishably smaller norms; the V-FMO tensors would require more material volume to be realizable.

Finally, we compare the HS-FOMO compliance with the result of alternating minimization using sequential laminates. Sequential laminates provide a realizable upper bound in the present setting because the discretization is finite and element stresses are nonconstant. The resulting compliance is $43.699$, i.e., $1.27\%$ higher than in the HS-FOMO setting, with $\lambda = 175.291$. In this case, alternating minimization converged very slowly due to the high contrast between the strong and weak phases, requiring $198{,}953$ iterations. Comparing the third and fourth columns of Fig.~\ref{fig:cantilever_1e-06}, the finite-rank laminate design shows a more localized volume distribution, likely due to a narrower admissible class of elasticity tensors. The higher multiplier in the HS-FOMO solution can be interpreted as a larger shadow price of volume: because HS-FOMO resolves the full finite-element response (including stress peaks), it predicts a stronger marginal compliance reduction from additional material, whereas stress averaging in alternating minimization smooths these peaks and yields a smaller effective marginal value, hence a lower $\lambda$.

\subsubsection{Phase contrast \texorpdfstring{$10^{-3}$}{10e-3}}

For the phase-contrast setting $10^{-3}$, the same qualitative picture as in the $10^{-6}$ case is preserved, so we focus only on what changes. All compliance values are slightly lower, which is expected for the milder contrast between phases: ZO-FMO gives $18.954$, V-FMO gives $39.721$, HS-FOMO gives $42.456$, and sequential laminates give $42.968$. Compared with the $10^{-6}$ case, the relative gap between V-FMO and HS-FOMO is smaller (about $6.89\%$ instead of $8.31\%$), suggesting that for $10^{-3}$ contrast V-FMO already captures more of the attainable improvement.

The HS-FOMO and sequential-laminate results remain very close, with only a $1.21\%$ difference in compliance, so the near-tight agreement observed before still holds. The multipliers follow the same pattern as in the high-contrast case: $\lambda$ increases from ZO-FMO to V-FMO and then to HS-FOMO, while the alternating minimization value stays below HS-FOMO ($14.937$, $164.027$, $184.948$, and $173.626$, respectively). This is consistent with the same interpretation as before: HS-FOMO sees the full element-wise finite-element response, including local stress peaks, whereas alternating minimization with sequential laminates uses stress averaging, which smooths sensitivities and leads to a lower effective shadow price of volume.

\begin{figure}[!htbp]
\centering
\includegraphics[width=\linewidth]{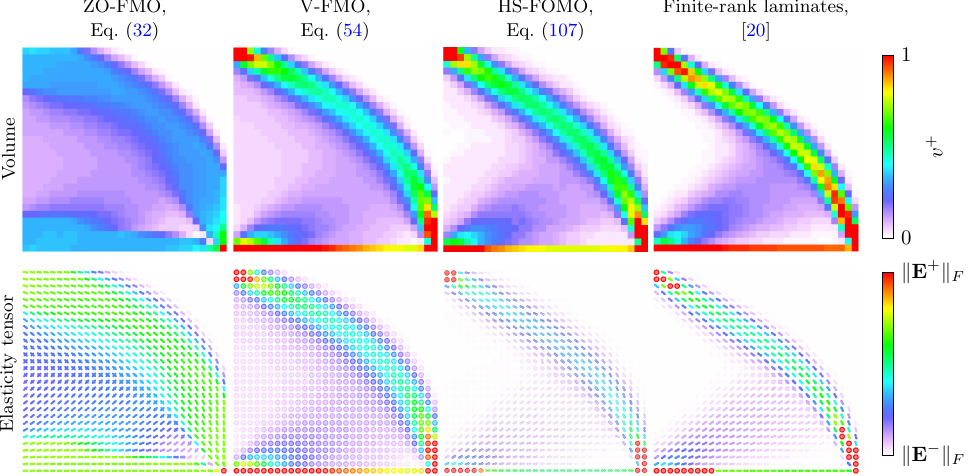}
\caption{Cantilever beam optimization for $\Emin = 10^{-3}\Emax$: volume fraction (top row) and elasticity tensor rosettes (bottom row) for ZO-FMO, V-FMO, HS‑FOMO, and sequential laminate designs.}
\label{fig:cantilever_1e-03}
\end{figure}

For the $10^{-3}$ contrast, the visual trends in Fig.~\ref{fig:cantilever_1e-03} are largely consistent with the $10^{-6}$ case in Fig.~\ref{fig:cantilever_1e-06}. The ZO-FMO model again produces a diffused distribution, while the V-FMO, HS-FOMO, and finite-rank laminate solutions concentrate material along the same main load-carrying arc. As before, the V-FMO column shows isotropic tensors, whereas HS-FOMO and sequential laminates recover orthotropic tensors with very similar orientation fields. Compared with $10^{-6}$, the $10^{-3}$ designs appear slightly less extreme (smoother transitions and marginally weaker localization), which is consistent with the milder phase contrast and the slightly smaller quantitative gaps reported above.

\begin{figure}[!t]
\centering
\includegraphics[width=\linewidth]{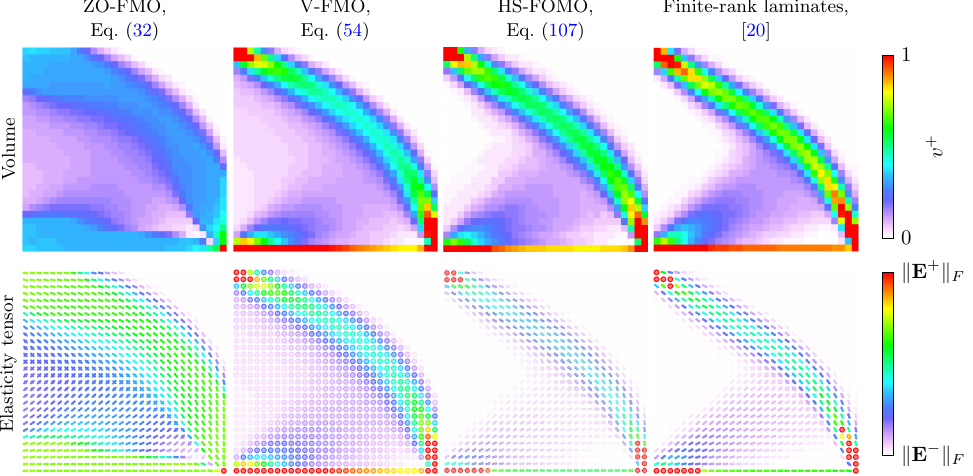}
\caption{Cantilever beam optimization for $\Emin = 10^{-2}\Emax$: volume fraction (top row) and elasticity tensor rosettes (bottom row) for ZO-FMO, V-FMO, HS‑FOMO, and sequential laminate designs.}
\label{fig:cantilever_1e-02}
\end{figure}

\subsubsection{Phase contrast \texorpdfstring{$10^{-2}$}{10e-2}}

\begin{figure}[!b]
\centering
\includegraphics[width=\linewidth]{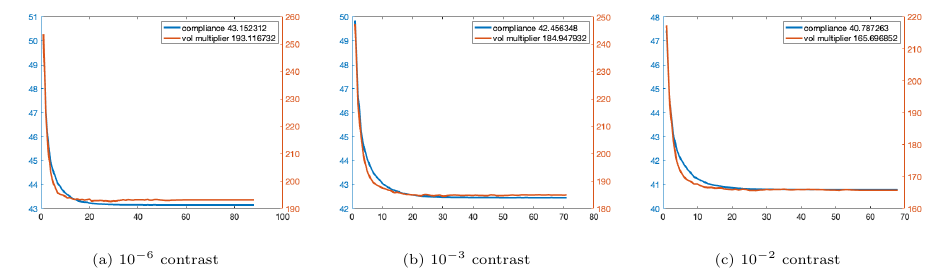}
\caption{Convergence of the sequential global programming algorithm for the HS-FOMO problem across different phase contrasts. The blue curve shows compliance and the orange curve shows the volume multiplier $\lambda$.}
\label{fig:cantilever_sgpfmo_convergence}
\end{figure}

For the $10^{-2}$ phase contrast in Fig.~\ref{fig:cantilever_1e-02}, the same qualitative ordering is preserved as in Figs.~\ref{fig:cantilever_1e-06} and~\ref{fig:cantilever_1e-03}, but the differences between models become smaller. The compliance values are $18.827$ (ZO-FMO), $38.675$ (V-FMO), $40.787$ (HS-FOMO), and $41.106$ (finite-rank laminates), so all values decrease further as the contrast weakens. The V-FMO-to-HS-FOMO gap drops to $5.46\%$ (from $6.89\%$ at $10^{-3}$ and $8.31\%$ at $10^{-6}$), and the laminate-to-HS-FOMO gap drops to $0.78\%$ (from $1.21\%$ and $1.27\%$), indicating progressive convergence of the formulations for milder contrast.

The visual comparison supports this trend: the main load-carrying arc remains in the same location, but the distributions are slightly smoother and less sharply localized than in the $10^{-6}$ and $10^{-3}$ cases. As before, the V-FMO column remains isotropic, while HS-FOMO and sequential laminates show closely aligned orthotropic orientation fields. The multipliers also decrease ($14.527$, $152.165$, $165.697$, and $158.800$, respectively), while retaining the same pattern, including a higher $\lambda$ for HS-FOMO than for sequential laminates.

For completeness, Table~\ref{tab:cantilever_phase_contrast_summary} summarizes the compliance and volume multipliers corresponding to the designs shown in Figs.~\ref{fig:cantilever_1e-06}--\ref{fig:cantilever_1e-02}. As the SGP method is less known compared to the other approaches, we present convergence plots in Fig.~\ref{fig:cantilever_sgpfmo_convergence}. It is seen that objective function values decrease almost monotonically and that typically between 60 and 90 iterations are required.

\begin{table}[!t]
\centering
\small
\setlength{\tabcolsep}{6pt}
\begin{tabular}{l l l r r}
\hline
Contrast $\Emin/\Emax$ & Model & Problem & Compliance & $\lambda$ \\
\hline
$10^{-6}$ & ZO-FMO & Eq.~\eqref{eq:fmo} & $18.978$ & $15.038$ \\
 & V-FMO & Eq.~\eqref{eq:fmo_voigtVTS} & $39.843$ & $165.439$ \\
 & HS-FOMO & Eq.~\eqref{eq:fmo_hashin_ortho_nested} & $43.152$ & $193.117$ \\
 & Finite-rank laminates & Sec.~\ref{sec:allaire_alg} & $43.699$ & $175.291$ \\
\hline
$10^{-3}$ & ZO-FMO & Eq.~\eqref{eq:fmo} & $18.954$ & $14.937$ \\
 & V-FMO & Eq.~\eqref{eq:fmo_voigtVTS} & $39.721$ & $164.027$ \\
 & HS-FOMO & Eq.~\eqref{eq:fmo_hashin_ortho_nested} & $42.456$ & $184.948$ \\
 & Finite-rank laminates & Sec.~\ref{sec:allaire_alg} & $42.968$ & $173.626$ \\
\hline
$10^{-2}$ & ZO-FMO & Eq.~\eqref{eq:fmo} & $18.827$ & $14.527$ \\
 & V-FMO & Eq.~\eqref{eq:fmo_voigtVTS} & $38.675$ & $152.165$ \\
 & HS-FOMO & Eq.~\eqref{eq:fmo_hashin_ortho_nested} & $40.787$ & $165.697$ \\
 & Finite-rank laminates & Sec.~\ref{sec:allaire_alg} & $41.106$ & $158.800$ \\
\hline
\end{tabular}
\caption{Cantilever problem results across phase contrasts and different formulations.}
\label{tab:cantilever_phase_contrast_summary}
\end{table}

\subsection{Multi-loadcase problem}

As the second illustration, we consider a multi-loadcase compliance problem on a rectangular domain of width $2$ and height $1$, see Fig.~\ref{fig:multi-loadcase}. The structure is supported at the four corner points, and two independent unit point loads are applied: a downward load at the midpoint of the top edge (first loadcase) and an upward load at the midpoint of the bottom edge (second loadcase). In contrast to the cantilever problem, the objective is now the sum of compliances, $c_1+c_2$, under the same global volume constraint $0.2$. We report results for the phase contrasts of $\Emax$ and $\Emin$ $10^{-6}$, $10^{-3}$, and $10^{-2}$ on $40\times20$ and $60\times30$ discretizations, and compare the ZO-FMO, V-FMO, and HS-FOMO formulations. The alternating-minimization results are not included in this example because its multi-loadcase extension \citep{Allaire1996} does not enforce orthotropic microstructures.

\begin{figure}[!b]
    \centering
    \includegraphics{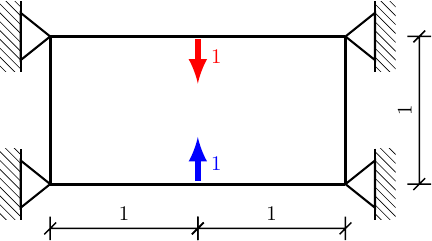}
    \caption{Multi-loadcase problem: geometry and boundary conditions}
    \label{fig:multi-loadcase}
\end{figure}

\subsubsection{Quantitative overview}

Table~\ref{tab:multi_loadcase_summary} summarizes the quantitative comparison of ZO-FMO, V-FMO, and HS-FOMO formulations for all considered contrasts and both discretizations. Across all cases, the compliance values follow the consistent ordering $c^{\mathrm{ZO\text{-}FMO}} < c^{\mathrm{V\text{-}FMO}} < c^{\mathrm{HS\text{-}FOMO}}$, with a large increase from ZO-FMO to V-FMO and a smaller, but still systematic, increase from V-FMO to HS-FOMO.

The volume multipliers show the same hierarchy, $\lambda^{\mathrm{ZO\text{-}FMO}} < \lambda^{\mathrm{V\text{-}FMO}} < \lambda^{\mathrm{HS\text{-}FOMO}}$, indicating progressively tighter coupling between stiffness admissibility and effective material usage from ZO-FMO to V-FMO and further to HS-FOMO. Detailed trend interpretations are discussed separately in Section~\ref{sec:multi_loadcase_phase_contrast} (phase-contrast effects) and Section~\ref{sec:multi_loadcase_discretization} (discretization and convergence effects).

\begin{table}[!b]
\centering
\small
\setlength{\tabcolsep}{5pt}
\begin{tabular}{l l l l r r}
\hline
Discretization & Contrast $\Emin/\Emax$ & Model & Problem & Compliance & $\lambda$ \\
\hline
$40\times20$ & $10^{-6}$ & ZO-FMO & Eq.~\eqref{eq:fmo} & $14.213$ & $13.743$ \\
 &  & V-FMO & Eq.~\eqref{eq:fmo_voigtVTS} & $28.459$ & $123.643$ \\
 &  & HS-FOMO & Eq.~\eqref{eq:fmo_hashin_ortho_nested} & $32.017$ & $154.212$ \\
\hline
$60\times30$ & $10^{-6}$ & ZO-FMO & Eq.~\eqref{eq:fmo} & $14.710$ & $13.101$ \\
 &  & V-FMO & Eq.~\eqref{eq:fmo_voigtVTS} & $28.992$ & $121.908$ \\
 &  & HS-FOMO & Eq.~\eqref{eq:fmo_hashin_ortho_nested} & $32.256$ & $150.620$ \\
\hline
$40\times20$ & $10^{-3}$ & ZO-FMO & Eq.~\eqref{eq:fmo} & $14.189$ & $13.652$ \\
 &  & V-FMO & Eq.~\eqref{eq:fmo_voigtVTS} & $28.361$ & $122.476$ \\
 &  & HS-FOMO & Eq.~\eqref{eq:fmo_hashin_ortho_nested} & $31.498$ & $146.766$ \\
\hline
$60\times30$ & $10^{-3}$ & ZO-FMO & Eq.~\eqref{eq:fmo} & $14.687$ & $13.012$ \\
 &  & V-FMO & Eq.~\eqref{eq:fmo_voigtVTS} & $28.897$ & $120.811$ \\
 &  & HS-FOMO & Eq.~\eqref{eq:fmo_hashin_ortho_nested} & $31.693$ & $144.232$ \\
\hline
$40\times20$ & $10^{-2}$ & ZO-FMO & Eq.~\eqref{eq:fmo} & $14.007$ & $13.006$ \\
 &  & V-FMO & Eq.~\eqref{eq:fmo_voigtVTS} & $27.534$ & $112.883$ \\
 &  & HS-FOMO & Eq.~\eqref{eq:fmo_hashin_ortho_nested} & $30.104$ & $131.556$ \\
\hline
$60\times30$ & $10^{-2}$ & ZO-FMO & Eq.~\eqref{eq:fmo} & $14.524$ & $12.419$ \\
 &  & V-FMO & Eq.~\eqref{eq:fmo_voigtVTS} & $28.092$ & $111.756$ \\
 &  & HS-FOMO & Eq.~\eqref{eq:fmo_hashin_ortho_nested} & $30.357$ & $129.209$ \\
\hline
\end{tabular}
\caption{Multi-loadcase problem results across phase contrasts, discretizations, and formulations.}
\label{tab:multi_loadcase_summary}
\end{table}

\subsubsection{Effect of phase contrast}\label{sec:multi_loadcase_phase_contrast}

\begin{figure}[!t]
\centering
\newlength{\mlside}
\includegraphics[width=\linewidth]{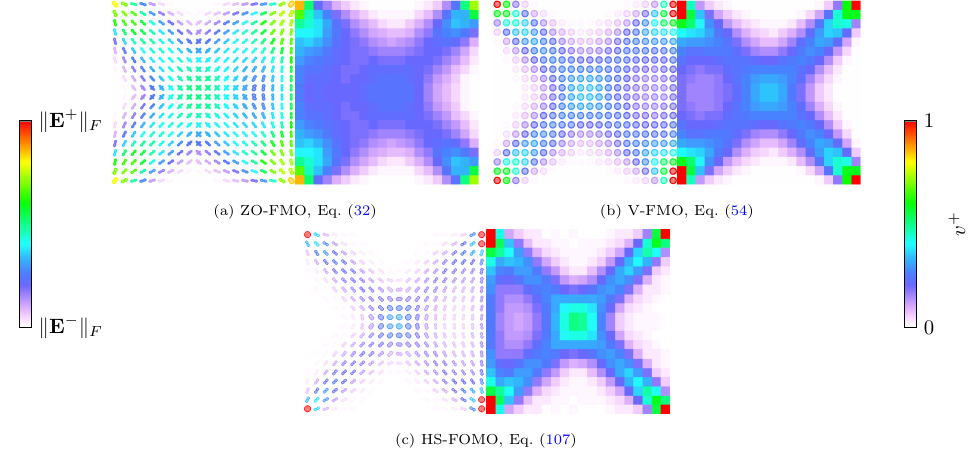}
\caption{Multi‑loadcase optimization for discretization $40\times20$ and $\Emin = 10^{-6}\Emax$. In each image, the left half shows the elasticity‑tensor rosettes and the right half shows the volume fraction $v^+$.}
\label{fig:ml_40_1e-6}
\end{figure}

To isolate the role of phase contrast, we compare results at fixed mesh for $\Emin/\Emax\in\{10^{-6},10^{-3},10^{-2}\}$ using Table~\ref{tab:multi_loadcase_summary} and the field plots in Figs.~\ref{fig:ml_40_1e-6}--\ref{fig:ml_60_1e-2}. For both discretizations, all compliance values decrease monotonically as the contrast weakens: on $40\times20$, from $14.213\to14.189\to14.007$ (ZO-FMO), $28.459\to28.361\to27.534$ (V-FMO), and $32.017\to31.498\to30.104$ (HS-FOMO); on $60\times30$, from $14.710\to14.687\to14.524$, $28.992\to28.897\to28.092$, and $32.256\to31.693\to30.357$, respectively.

\begin{figure}[!b]
\centering
\includegraphics[width=\linewidth]{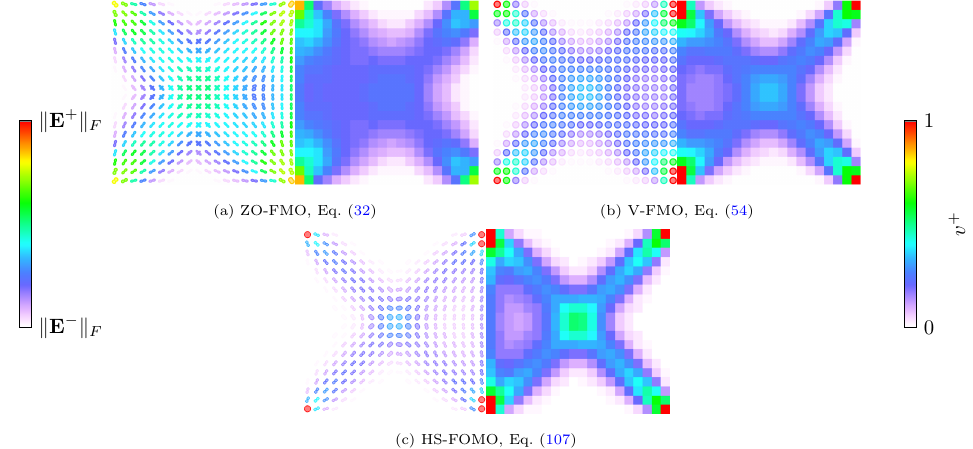}
\caption{Multi‑loadcase optimization for discretization $40\times20$ and $\Emin = 10^{-3}\Emax$. In each image, the left half shows the elasticity‑tensor rosettes and the right half shows the volume fraction $v^+$.}
\label{fig:ml_40_1e-3}
\end{figure}

The magnitude of this decrease is model-dependent. From $10^{-6}$ to $10^{-2}$, the reduction is largest for HS-FOMO (about $6\%$), intermediate for V-FMO (about $3\%$), and smallest for ZO-FMO ($1.5\%$). This is consistent with model fidelity: as $\Emin$ increases, the weak phase carries load more effectively, and tighter formulations capture this gain more directly, whereas ZO-FMO is comparatively less sensitive. The same mechanism explains the shrinking V-FMO-to-HS-FOMO gap with weaker contrast (from $12.50\%\to11.06\%\to9.33\%$ on $40\times20$ and from $11.26\%\to9.68\%\to8.06\%$ on $60\times30$), because the Hashin--Shtrikman correction terms are driven by phase-stiffness contrast.

\begin{figure}[!t]
\centering
\includegraphics[width=\linewidth]{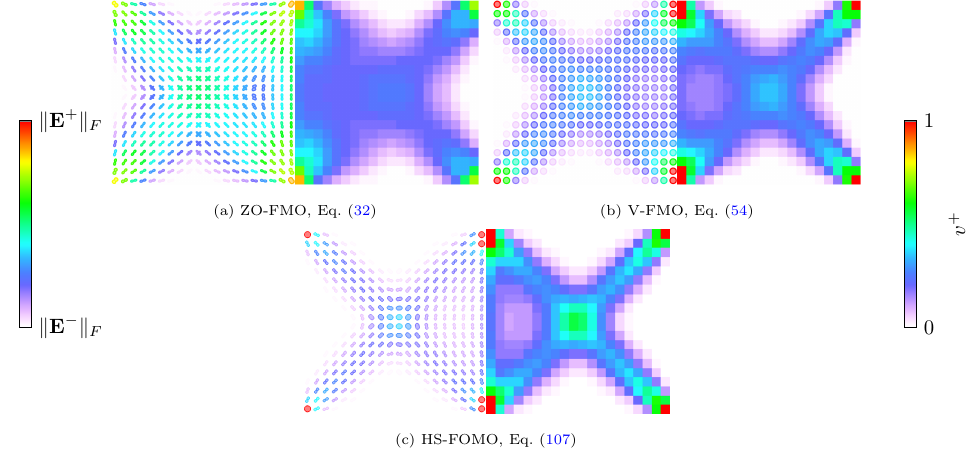}
\caption{Multi‑loadcase optimization for discretization $40\times20$ and $\Emin = 10^{-2}\Emax$. In each image, the left half shows the elasticity‑tensor rosettes and the right half shows the volume fraction $v^+$.}
\label{fig:ml_40_1e-2}
\end{figure}

\begin{figure}[!b]
\centering
\includegraphics[width=\linewidth]{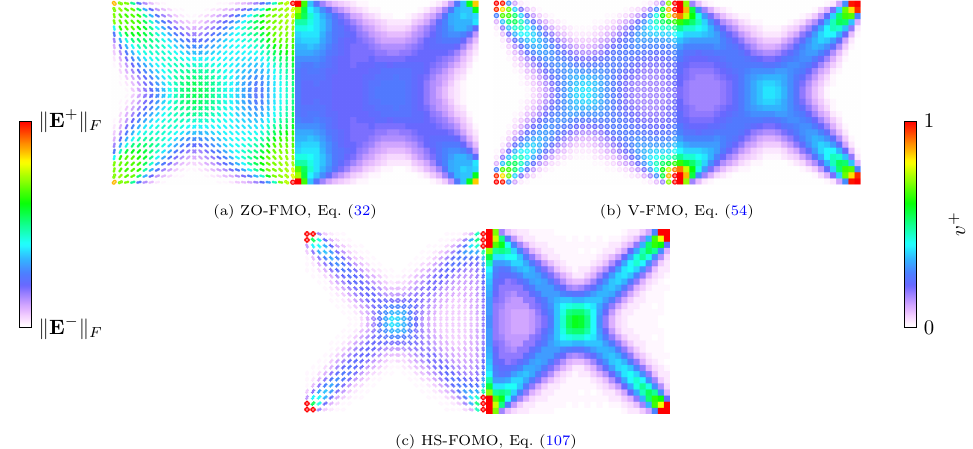}
\caption{Multi‑loadcase optimization for discretization $60\times30$ and $\Emin = 10^{-6}\Emax$. In each image, the left half shows the elasticity‑tensor rosettes and the right half shows the volume fraction $v^+$.}
\label{fig:ml_60_1e-6}
\end{figure}

The figures indicate that phase contrast affects the amplitude of localization more than the topology of load-transfer paths. Here, amplitude refers to how pronounced the high-value regions are in the volume field $v^+$ and in the rosette-magnitude colors (i.e., how close local values are to the $\lVert\tensf{E}^+\rVert_F$ end of the scale). For each model, the dominant load-transfer pattern is preserved across contrasts: ZO-FMO remains comparatively diffused, V-FMO solutions retain isotropic tensors, as confirmed by circular rosettes, and HS-FOMO preserves the orthotropic pattern with similar orientation fields. As contrast weakens, peaks near loads, supports and along the main transfer paths are reduced, and the volume field becomes slightly more evenly distributed, reflecting reduced stiffness mismatch and, consequently, reduced need for strongly concentrated stiff-phase routing.

\begin{figure}[!t]
\centering
\includegraphics[width=\linewidth]{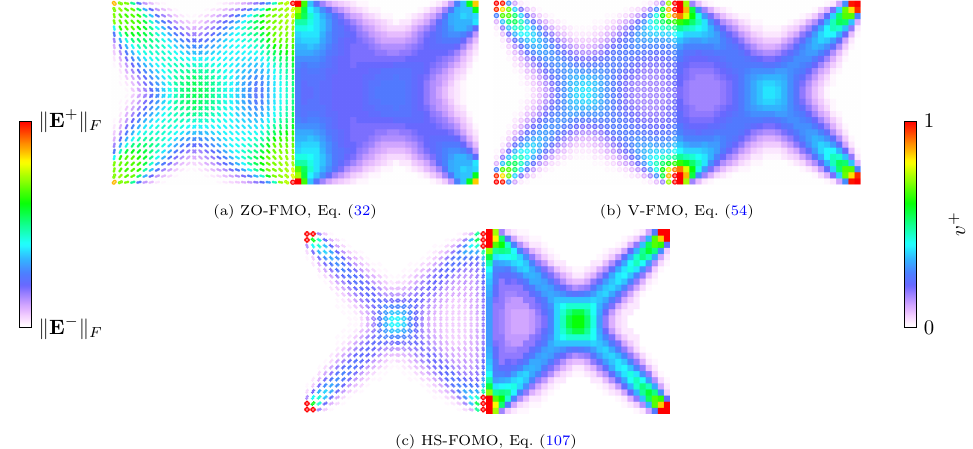}
\caption{Multi‑loadcase optimization for discretization $60\times30$ and $\Emin = 10^{-3}\Emax$. In each image, the left half shows the elasticity‑tensor rosettes and the right half shows the volume fraction $v^+$.}
\label{fig:ml_60_1e-3}
\end{figure}

The multipliers in Table~\ref{tab:multi_loadcase_summary} also decrease monotonically with weaker contrast for all formulations. This follows the same physical interpretation: when the weak phase is less compliant (i.e., $\Emin$ increases), the marginal compliance gain from additional stiff material decreases, and the shadow price of the volume constraint is lower.

\begin{figure}[!b]
\centering
\includegraphics[width=\linewidth]{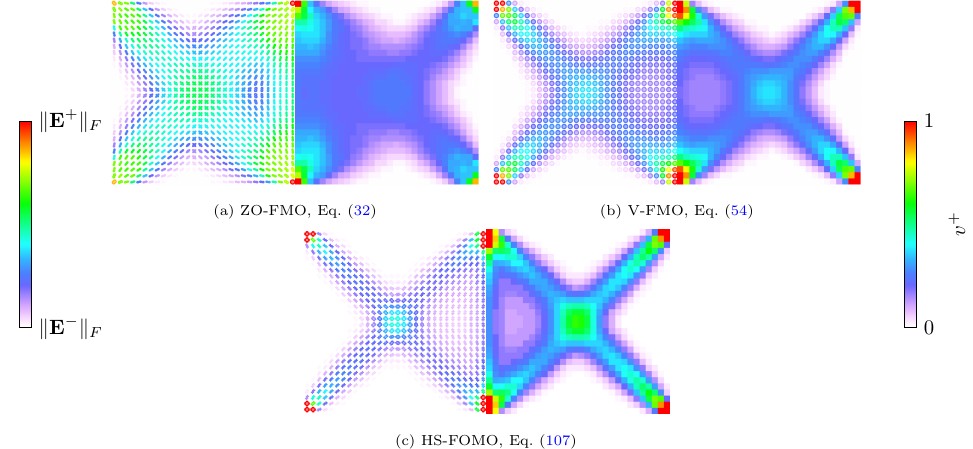}
\caption{Multi‑loadcase optimization for discretization $60\times30$ and $\Emin = 10^{-2}\Emax$. In each image, the left half shows the elasticity‑tensor rosettes and the right half shows the volume fraction $v^+$.}
\label{fig:ml_60_1e-2}
\end{figure}

\subsubsection{Effect of discretization and convergence}\label{sec:multi_loadcase_discretization}

To assess discretization effects, we compare the two meshes at fixed phase contrast using Table~\ref{tab:multi_loadcase_summary}. Defining
\begin{equation}
\Delta_h c := \frac{c_{60\times30}-c_{40\times20}}{c_{40\times20}},
\end{equation}
the compliance increase under refinement is moderate: about $3.5$--$3.7\%$ for ZO-FMO, $1.9$--$2.0\%$ for V-FMO, and $0.6$--$0.9\%$ for HS-FOMO.

The sign of this change is explained by two competing mechanisms. On one hand, finite element modeling correction tends to increase compliance, because coarse displacement-based meshes are typically over-stiff and therefore underestimate the objective. On the other hand, discretization refinement enlarges the admissible design space by allowing finer spatial variation of $\Ei$, which tends to reduce the objective. In the present results, the former effect dominates. This tendency is further amplified by point loads and point supports, since refinement resolves stress concentrations more sharply and reduces artificial coarse-mesh stiffening.

The refinement trend is also consistent in the relative gap between V-FMO and HS-FOMO compliances. Moving from $40\times20$ to $60\times30$, this gap decreases from $12.50\%$ to $11.26\%$ at contrast $10^{-6}$, from $11.06\%$ to $9.68\%$ at $10^{-3}$, and from $9.33\%$ to $8.06\%$ at $10^{-2}$, i.e., by about $1.0$--$1.4$ percentage points in all three cases. The multipliers show the same refinement direction: they are reduced by approximately $4.5$--$4.7\%$ for ZO-FMO, $1.0$--$1.4\%$ for V-FMO, and $1.7$--$2.3\%$ for HS-FOMO, consistent with a lower marginal value of additional stiff material on the refined discretization.

The field plots in Figs.~\ref{fig:ml_40_1e-6}--\ref{fig:ml_60_1e-2} indicate robust topology under mesh refinement: the same global transfer paths are preserved, while the $60\times30$ solutions mainly sharpen interfaces and load paths. Convergence curves in Fig.~\ref{fig:sgpfmo_convergence_multi_loadcase} exhibit a common two-stage behavior for HS-FOMO, with a rapid initial compliance drop followed by a shallow tail with small multiplier oscillations; practical stabilization is achieved in roughly $50$--$120$ iterations.

\begin{figure}[!t]
\centering
\includegraphics[width=\linewidth]{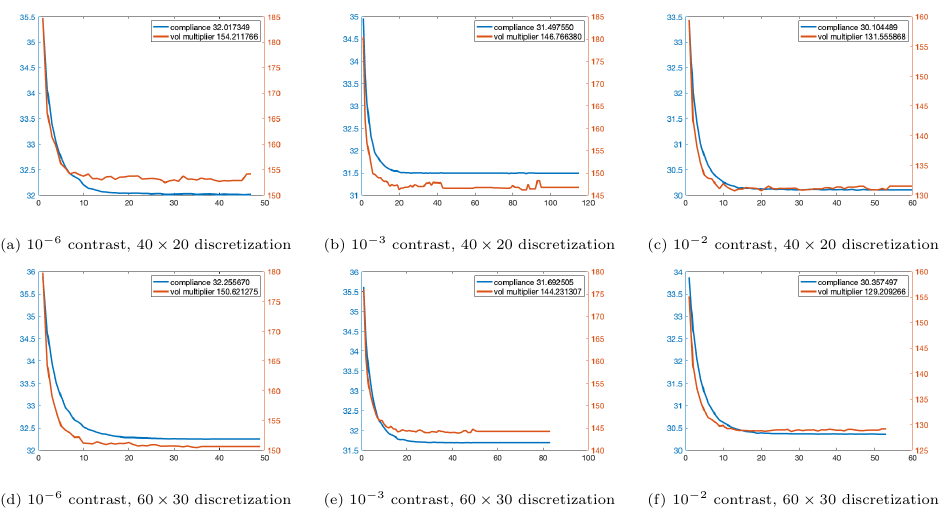}
\caption{Convergence of the sequential global programming algorithm for the HS-FOMO problem across different phase contrasts. The blue curve shows compliance and the orange curve shows the volume multiplier $\lambda$.}
\label{fig:sgpfmo_convergence_multi_loadcase}
\end{figure}

\section{Summary}\label{sec:summary}

This paper considers the two-dimensional plane-stress setting with well-ordered isotropic weak and strong phases. In this setting, we establish a clear hierarchy of free material optimization models based on increasingly tight energy bounds, from zeroth-order to Voigt and then to Hashin--Shtrikman. The continuum problem \eqref{eq:fmo_continuum} is used as a theoretical reference that organizes this hierarchy through \eqref{eq:hierarchy_bounds} and supports existence statements. Within the convex formulations considered here, the Voigt model is the strongest relaxation. It is strictly tighter than the zeroth-order model for $0<\overline{V}<1$ (Proposition~\ref{prop:voigt_inclusion}) and coincides with it at $\overline{V}\in\{0,1\}$. It also admits an optimal isotropic representation, which reduces the problem to variable-thickness-sheet optimization with one scalar variable per element (Proposition~\ref{prop:voigt_iso} and \eqref{eq:fmo_voigtVTS}). In addition, for the single-loadcase continuum zeroth-order model, at least one optimal solution can be chosen orthotropic (Proposition~\ref{prop:zo_orthotropic_representative}). For continuum well-posedness, minimizers exist directly for the zeroth-order and Voigt models, while for Hashin--Shtrikman bounds we obtain existence in the $H$-relaxed sense (Section~\ref{sec:cont_fmo}).

For the Hashin--Shtrikman model, the results clarify both its strength and its algorithmic implications. The HS correction is nonnegative and vanishes at endpoints (Proposition~\ref{prop:q_ge_0}). For every interior volume fraction $v_i^+\in(0,1)$, there exist strains for which this correction is strictly positive (while equality can still occur for a specific invariant relation), giving strict interior improvement over Voigt and leading to nonconvexity of the feasible set in $(v_i^+,\Ei)$ (Lemma~\ref{lem:nonconvex}). At the same time, its convex hull is exactly the Voigt feasible set (Lemma~\ref{lem:HS_convex_hull}), which explains why no tighter convex alternative is available in the same setting. Geometric visualizations of the admissible sets (Figs.~\ref{fig:zero_voigt}, \ref{fig:voigt_hashin}, and \ref{fig:voigt_hashin_vol}) make these relations directly visible. Separately, we show that the local HS volume variables can be eliminated through active-constraint characterization and explicit activating-volume formulas (Lemma~\ref{lem:HS_active}, Proposition~\ref{prop:explicit_volume}), giving the reduced model \eqref{eq:fmo_hashin_reduced}. A key by-product is an explicit volume estimator that, for a given effective elasticity tensor, quantifies the minimum stiff-phase fraction required for realizability through a worst-case strain evaluation. For orthotropic tensors this further reduces to a one-parameter search in $t=\lvert\Tr(\strain)\rvert$ (Proposition~\ref{prop:trace_only_reduction} and \eqref{eq:fmo_hashin_ortho}).

To connect the discrete HS-FMO model with classical homogenization theory, we additionally analyze the single-loadcase continuum limit: in that setting, the HS-FMO relaxation is tight and has the same optimal value as the Allaire--Kohn relaxed formulation (Theorem~\ref{th:tight}). The corresponding laminate-construction viewpoint is illustrated in Fig.~\ref{fig:hashin_laminates}. The tightness statement should be interpreted as a continuum relaxation result, not as exact equivalence on a fixed finite-element mesh. For multiple loadcases, HS-FOMO is generally not tight and should be interpreted as a lower bound unless additional coupled energy bounds are enforced.

Computationally, the nonconvex orthotropic HS problem is handled with sequential global programming (Section~\ref{sec:sgp}). Numerically, both benchmarks confirm the predicted ordering $c^{\mathrm{ZO\text{-}FMO}}<c^{\mathrm{V\text{-}FMO}}<c^{\mathrm{HS\text{-}FOMO}}$ (Tables~\ref{tab:cantilever_phase_contrast_summary} and \ref{tab:multi_loadcase_summary}). In the cantilever test, HS-FOMO remains close to finite-rank laminate solutions (compliance gap about $0.78$--$1.27\%$), while in the multi-loadcase test the ordering $c^{\mathrm{ZO\text{-}FMO}}<c^{\mathrm{V\text{-}FMO}}<c^{\mathrm{HS\text{-}FOMO}}$ is preserved across phase contrasts and mesh refinements, with stable SGP empirical convergence behavior (Figs.~\ref{fig:cantilever_sgpfmo_convergence} and \ref{fig:sgpfmo_convergence_multi_loadcase}).

Natural extensions of this work include several directions. A first step is to build inner approximations of the realizability set by designing connectable microstructures via inverse homogenization \citep{Sigmund1994} and by sampling the energy-bound feasible set. A second direction is to move beyond compliance minimization and consider other objectives and constraints within the same free material framework \citep{Tyburec2025}. A third direction is to further analyze the sequential global programming approach from both computational and theoretical viewpoints, in particular with respect to numerical efficiency, scalability, and convergence behavior. Finally, on the modeling side, it is important to investigate tighter energy bounds, broader material symmetry classes, and fully three-dimensional formulations \citep{Burazin2023}.

\paragraph{Funding} The authors acknowledge support from the mobility project 8J24DE005, ``Free material optimization for manufacturable modular structures,'' funded jointly by the German Academic Exchange Service (DAAD) and the Ministry of Education, Youth and Sports of the Czech Republic (M\v{S}MT). Marek Tyburec and Shenyuan Ma were additionally co-funded by the European Union through the ROBOPROX project (reg.\ no.\ CZ.02.01.01/00/22 008/000459).

\paragraph{Declaration of generative AI use} During the preparation of this work, Marek Tyburec used ChatGPT to improve manuscript clarity. After using this tool, the authors reviewed and edited the content as needed and take full responsibility for the content of the published article.

\paragraph{Data availability} The code and data used to produce the results in this paper are available on request from the corresponding author.

\paragraph{CRediT authorship contribution statement} MT: Conceptualization, Methodology, Software, Formal analysis, Investigation, Writing -- original draft, Visualization. MS: Conceptualization, Methodology, Software, Formal analysis, Investigation, Writing -- original draft, Visualization. SM: Investigation, Writing -- original draft.

\appendix
\renewcommand{\thesection}{\Alph{section}}
\renewcommand{\theproposition}{\thesection.\arabic{proposition}}

\section{Properties of the Hashin--Shtrikman upper bound function}

This section collects regularity properties of the one-dimensional map
$v_i^+\mapsto f^{\mathrm{HS}}(\strain;v_i^+)$ (for fixed $\strain$) that are used in the main text when eliminating local volume fractions and characterizing activating volumes. We first prove continuity, then monotonicity, and finally derive existence and uniqueness of the activating volume fraction solving
$\langle \Ei\strain,\strain\rangle=f^{\mathrm{HS}}(\strain;v_i^+)$.

\begin{proposition}[Continuity of the HS upper-bound function in $v_i^+$]\label{prop:hs_continuous}
    For any fixed strain $\strain \in \mathbb{S}^{2\times2}$ with the Frobenius norm $\sqrt{2}/2$ and $t = \abstrstrain$, the Hashin--Shtrikman upper bound function $f^{\mathrm{HS}} (v_i^+)=\langle [(1-v_i^+)\Emin + v_i^+ \Emax ]\strain,\strain \rangle - q(t;v_i^+)$ with $q(t;v_i^+)$ defined in \eqref{eq:qt} is continuous in the volume fraction $v_i^+ \in [0,1]$.
\end{proposition}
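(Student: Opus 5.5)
The plan is to split $f^{\mathrm{HS}}$ into its Voigt part and the correction $q(t;\cdot)$. The Voigt part $v\mapsto\langle[(1-v)\Emin+v\Emax]\strain,\strain\rangle$ is affine in $v$, hence continuous, so everything reduces to continuity of $v\mapsto q(t;v)$ on $[0,1]$ for fixed $t\in[0,1]$.

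\textbf{Step 1: each branch formula is continuous.} Set $d_1(v):=\kappa^++\mu^+-(\Delta\kappa+\Delta\mu)v$, $d_2(v):=\kappa^++\mu^+-\Delta\kappa v$ and $d_3(v):=\kappa^++\mu^+-\Delta\mu v$. On $[0,1]$ these satisfy $d_1(v)\ge\min\{\kappa^++\mu^+,\kappa^-+\mu^-\}>0$, and similarly $d_2,d_3>0$. So $q_1,q_2,q_3$ in \eqref{eq:qt} are rational functions of $v$ with nonvanishing denominators, and each is continuous on all of $[0,1]$. This holds even outside the region where the branch is selected.

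\textbf{Step 2: the branch regions are intervals in $v$.} With $v_{12}(t)$ and $v_{13}(t)$ as in the proof of Proposition~\ref{prop:explicit_volume}, the branch conditions read $v\le\min\{v_{12},v_{13}\}$, $v>v_{12}$ and $v>v_{13}$; these are inequalities that are linear in $v$. Branches 2 and 3 never overlap. Squaring and multiplying the two strict conditions gives $v^2\Delta\kappa\Delta\mu>d_2d_3$. But
\[
d_2d_3-v^2\Delta\kappa\Delta\mu=(\kappa^++\mu^+)\,d_1(v)>0,
\]
so the two conditions cannot hold simultaneously. Hence, for fixed $t$, $[0,1]$ splits into at most two intervals: $[0,v^\ast]$ belonging to branch 1 and $(v^\ast,1]$ belonging to branch 2 or branch 3, where $v^\ast=\min\{v_{12},v_{13}\}$. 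The degenerate cases $t=0$ and $t=1$ fit this picture with $v_{12}=0$ or $v_{13}=0$ respectively.

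\textbf{Step 3 (main obstacle): matching at the transition.} It remains to show $q_1(t;v^\ast)=q_2(t;v^\ast)$ when $v^\ast=v_{12}$, and $q_1=q_3$ when $v^\ast=v_{13}$. Continuity from the right then follows from Step 1, and $q$ is continuous on $[0,1]$.

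At $v=v_{12}$ we have $vt\Delta\kappa=d_2 s$ with $s=\sqrt{1-t^2}$. I would compute $q_1-q_2$ over the common factor $(1-v)$ and eliminate $s$ via $s=vt\Delta\kappa/d_2$. Expected result:
\[
q_1-q_2=\frac{(1-v)}{d_1d_2}\,\bigl(v t\Delta\kappa-d_2 s\bigr)^2\cdot(\text{positive factor}),
\]
which vanishes at $v_{12}$. In practice I would verify this by expanding $v(\Delta\kappa t-\Delta\mu s)^2d_2-d_1\bigl(vt^2\Delta\kappa^2-s^2\Delta\mu d_2\bigr)$, using $d_1=d_2-\Delta\mu v$, and checking that it factors as $\Delta\mu\,(vt\Delta\kappa-d_2s)^2$. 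The branch-3 boundary is the same computation with the roles $(\kappa,t)\leftrightarrow(\mu,s)$ swapped.

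This algebraic identity is the step needing care. Once it holds, $q(t;\cdot)$ is piecewise continuous with matching values at its single breakpoint, so $f^{\mathrm{HS}}(\strain;\cdot)$ is continuous on $[0,1]$.
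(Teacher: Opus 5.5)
Your proposal is correct and follows essentially the paper's route: affine Voigt part, positive denominators, exclusion of the branch-2/branch-3 overlap via $d_2d_3-v^2\Delta\kappa\Delta\mu=(\kappa^++\mu^+)d_1>0$, and matching $q_1=q_2$ on $\Gamma_{12}$ and $q_1=q_3$ on $\Gamma_{13}$; your factorization $q_1-q_2=(1-v)\Delta\mu\,(vt\Delta\kappa-d_2s)^2/(d_1d_2)$ is exact, and your single-breakpoint description in $v$ makes the gluing step slightly more explicit than in the paper. One harmless slip: the degenerate cases are swapped, since $t=0$ gives $v_{13}=0$ and $t=1$ gives $v_{12}=0$.
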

\begin{proof}
    The Voigt part is affine in $v_i^+$, hence continuous. Thus, it suffices to show that $q(\strain;v_i^+)$ is continuous.

    For the denominators in \eqref{eq:qt}, set
    \begin{subequations}
    \begin{align}
        d_1 &= \kappa^+ + \mu^+ - (\Delta\kappa + \Delta\mu) v_i^+,\\
        d_2 &= \kappa^+ + \mu^+ - \Delta\kappa v_i^+,\\
        d_3 &= \kappa^+ + \mu^+ - \Delta\mu v_i^+.
    \end{align}
    \end{subequations}
    Because $\Delta\kappa = \kappa^+-\kappa^- >0$ and $\Delta\mu = \mu^+ - \mu^- >0$ with $v_i^+\in[0,1]$, we have $d_1, d_2, d_3 > 0$. Hence, the branch formulas are rational with strictly positive denominators, so $q(t;v_i^+)$ is continuous in $v_i^+$ on each branch interior. It remains to match the boundary values.

    Define the branch-separation sets
    \begin{subequations}
    \begin{align}
    \Gamma_{12}:\; &v_i^+ \Delta\kappa t= d_2\sqrt{1-t^2},\\
    \Gamma_{13}:\; &v_i^+ \Delta\mu \sqrt{1-t^2} = d_3 t,\\
    \Gamma_{23}:\; & \frac{d_3 t}{v_i^{+} \Delta \mu} < \sqrt{1-t^2} < \frac{v_i^{+}\Delta \kappa t} {d_2}.\label{eq:gamma23}
    \end{align}
    \end{subequations}
    On $\Gamma_{12}$, substituting $v_i^+ \Delta\kappa\, t = d_2 \sqrt{1-t^2}$ into the formulas for $q_1$ and $q_2$ from \eqref{eq:qt} gives
    \begin{equation}
         q_1(t;v_i^+) = q_2(t;v_i^+) = \frac{\Delta\kappa^2 v_i^+ (1 - v_i^+) t^2 d_1}{d_2^2} \quad \text{on}\quad \Gamma_{12}.
    \end{equation}
    On $\Gamma_{13}$, substituting $v_i^+ \Delta\mu\sqrt{1-t^2} = d_3\, t$ into $q_1$ and $q_3$ yields
    \begin{equation}
        q_1(t;v_i^+) = q_3(t;v_i^+) = \frac{\Delta\mu^2 v_i^+ (1 - v_i^+) (1 - t^2) d_1}{d_3^2}
 \quad \text{on}\quad \Gamma_{13}.
    \end{equation}
    Finally, let \eqref{eq:gamma23} hold. Multiplying the strict inequalities gives $t d_2 d_3 < (v_i^+)^2 \Delta\mu \Delta\kappa t$, which is equivalent to
    \begin{equation}
        t(d_1 + v_i^+ \Delta\kappa)(d_1 + v_i^+\Delta\mu) < (v_i^+)^2 t \Delta\kappa \Delta\mu.
    \end{equation}
    However, this cannot hold for any $v_i^+ \in (0,1)$ since $d_1 > 0$. Thus, $\Gamma_{23} = \emptyset$.
    
    At $v_i^+=0$ we have $q(t;0)=0$ (the first branch applies), and at $v_i^+=1$ each branch carries a factor $(1-v_i^+)$, so $q(t;1)=0$. Moreover, using the explicit boundary expressions above and $v_i^+(1-v_i^+)\to 0$ as $v_i^+\to 0^+$ or $1^-$,we see $\lim_{v_i^+\to 0^+}q(t;v_i^+)=q(t;0)$ and $\lim_{v_i^+\to 1^-}q(t;v_i^+)=q(t;1)$. Therefore, $q$ is continuous on $[0,1]$, and so is $f^{\mathrm{HS}}$.
\end{proof}

With continuity established, we next prove monotonicity with respect to $v_i^+$.

\begin{proposition}[Monotonicity of the HS upper-bound function in $v_i^+$]\label{prop:hs_monotone}
    For any fixed strain $\strain \in \mathbb{S}^{2\times2}$ with the Frobenius norm $\sqrt{2}/2$, the Hashin--Shtrikman upper bound function $f^{\mathrm{HS}} (v_i^+)=\langle [(1-v_i^+)\Emin + v_i^+ \Emax ]\strain,\strain \rangle - q(t;v_i^+)$ with $q(t;v_i^+)$ defined in \eqref{eq:qt} is nondecreasing in the volume fraction $v_i^+ \in [0,1]$.
\end{proposition}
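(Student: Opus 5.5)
Since $f^{\mathrm{HS}}(v)=\langle[\Emin+v(\Emax-\Emin)]\strain,\strain\rangle-q(t;v)$ and the Voigt part has $v$-derivative $\langle(\Emax-\Emin)\strain,\strain\rangle=t^2\Delta\kappa+s^2\Delta\mu$ (with $s^2=1-t^2$ under the normalization $\lVert\strain\rVert_F=\sqrt2/2$), the plan is to show branchwise that $\partial_v q\le t^2\Delta\kappa+(1-t^2)\Delta\mu$. By Proposition~\ref{prop:hs_continuous}, $f^{\mathrm{HS}}$ is continuous on $[0,1]$. For fixed $t$, the branch boundaries $\Gamma_{12},\Gamma_{13}$ are single points in $v$, since the conditions are linear in $v$: $v\le v_{12}(t)$ and $v\le v_{13}(t)$ in the notation of the proof of Proposition~\ref{prop:explicit_volume}. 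Hence $[0,1]$ splits into at most two intervals: branch 1 on $[0,\min\{v_{12},v_{13}\}]$, followed by branch 2 or branch 3. A continuous function that is differentiable with nonnegative derivative on each open piece is nondecreasing, so it suffices to check the sign of $\partial_v f^{\mathrm{HS}}$ inside each branch.

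\textbf{Branch 1.} Write $a:=\Delta\kappa t-\Delta\mu\sqrt{1-t^2}$ and $d_1=\kappa^++\mu^+-(\Delta\kappa+\Delta\mu)v$. Then
\[
\partial_v q_1=a^2\,\frac{(1-2v)d_1+(\Delta\kappa+\Delta\mu)v(1-v)}{d_1^2}.
\]
The cleanest route is probably the identity $t^2\Delta\kappa+s^2\Delta\mu-\tfrac{a^2}{\Delta\kappa+\Delta\mu}=\tfrac{\Delta\kappa\Delta\mu(t+s)^2}{\Delta\kappa+\Delta\mu}$. With it, $f^{\mathrm{HS}}$ on branch 1 becomes a positive constant times $v$, plus an affine term, plus $-\tfrac{a^2}{\Delta\kappa+\Delta\mu}$ times $\tfrac{v(1-v)(\Delta\kappa+\Delta\mu)}{d_1}-v$. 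One then checks that $v\mapsto v-\tfrac{v(1-v)(\Delta\kappa+\Delta\mu)}{d_1}$ has the right monotonicity. If this fails globally, I would instead use the branch-1 constraints $v\le v_{12},v_{13}$ to bound $\partial_vq_1$. These give $v\Delta\kappa t\le d_2 s$ and $v\Delta\mu s\le d_3t$, which control how large $a^2/d_1^2$ can be relative to $t^2\Delta\kappa+s^2\Delta\mu$.

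\textbf{Branches 2 and 3.} These are direct. In branch 2,
\[
\partial_v f^{\mathrm{HS}}=t^2\Delta\kappa+s^2\Delta\mu-\partial_v\Big[(1-v)\Big(\tfrac{v t^2\Delta\kappa^2}{d_2}-s^2\Delta\mu\Big)\Big].
\]
The $s^2\Delta\mu$ terms cancel into $-s^2\Delta\mu+s^2\Delta\mu$, so what remains is
\[
t^2\Delta\kappa-t^2\Delta\kappa^2\,\partial_v\tfrac{v(1-v)}{d_2}.
\]
Using $\partial_v\tfrac{v(1-v)}{d_2}=\tfrac{(1-2v)d_2+\Delta\kappa v(1-v)}{d_2^2}$, the claim reduces to
\[
d_2^2-\Delta\kappa(1-2v)d_2-\Delta\kappa^2v(1-v)\ge0.
\]
With $d_2=\kappa^++\mu^+-\Delta\kappa v$, this expression equals $(\kappa^++\mu^+)(\kappa^-+\mu^+)+\ldots$ after expansion. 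I expect it to factor as $(d_2-\Delta\kappa(1-v))(d_2+\Delta\kappa v)=(\kappa^-+\mu^+)(\kappa^++\mu^+)>0$. Branch 3 is symmetric, giving $(\kappa^++\mu^-)(\kappa^++\mu^+)>0$.

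\textbf{Main obstacle.} I expect branch 1 to be the hardest step, because of the interplay between $a^2$ and the Voigt slope. My first attempt will be the same factorization trick: rewrite $d_1^2\,\partial_v f^{\mathrm{HS}}$ as a quadratic in $v$, show its discriminant is controlled, or show it is positive at the branch endpoint and increasing in $v$. I would use the $\Gamma_{12}/\Gamma_{13}$ boundary values computed in Proposition~\ref{prop:hs_continuous} as a consistency check.
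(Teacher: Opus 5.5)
Your proposal is correct and has the same skeleton as the paper's proof: continuity from Proposition~\ref{prop:hs_continuous}, then a sign check of $\partial_v f^{\mathrm{HS}}$ on each branch interior. Your factorizations for branches~2 and~3 give exactly the paper's derivatives, $t^2(\kappa^-+\mu^+)(\kappa^++\mu^+)\Delta\kappa/d_2^2$ and $(1-t^2)(\kappa^++\mu^-)(\kappa^++\mu^+)\Delta\mu/d_3^2$.

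The step you left open in branch~1 goes through with the same factorization, replacing $\Delta\kappa$ by $\Lambda:=\Delta\kappa+\Delta\mu$. For $h(v)=v-\Lambda v(1-v)/d_1$ you get $d_1^2h'(v)=(d_1-\Lambda(1-v))(d_1+\Lambda v)=(\kappa^-+\mu^-)(\kappa^++\mu^+)$. Hence
\[
\partial_v f^{\mathrm{HS}}_1=\frac{\Delta\kappa\Delta\mu\,(t+s)^2}{\Lambda}+\frac{(\kappa^-+\mu^-)(\kappa^++\mu^+)\,a^2}{\Lambda\,d_1^2}.
\]
This needs neither the branch conditions nor your fallback.

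The paper's branch~1 argument differs only in algebra. It works with the slack variables $\eta_1=d_2s-vt\Delta\kappa$ and $\eta_2=d_3t-vs\Delta\mu$, and applies your quadratic identity to $(\eta_1,\eta_2)$ instead of $(s,t)$. Since $\eta_1+\eta_2=d_1(t+s)$ and $\Delta\kappa\eta_2-\Delta\mu\eta_1=(\kappa^++\mu^+)a$, the two expressions coincide.

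Your form has two advantages. It handles all three branches with one algebraic factorization. It also shows at once the uniform lower bound $\partial_v f^{\mathrm{HS}}_1\ge\Delta\kappa\Delta\mu/\Lambda>0$, because $t+s\ge 1$. That gives strict monotonicity, which Corollary~\ref{cor:uniqueness} needs, directly. The paper's expression is also strictly positive, since $\eta_1+\eta_2>0$, but this is less visible there.
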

\begin{proof}
    By Proposition \ref{prop:hs_continuous}, $f^{\mathrm{HS}} (v_i^+)$ is continuous in $v_i^+$. It, therefore, suffices to show $\partial f^{\mathrm{HS}} (v_i^+)/\partial v_i^+ \ge 0$ on the interior of each branch and then pass to the boundary by continuity.

    Write the Voigt term as
    \begin{equation}
        \langle [(1-v_i^+)\Emin + v_i^+ \Emax ]\strain,\strain \rangle = 
        (\kappa^- + v_i^+\Delta\kappa) t^2 + (\mu^- + v_i^+ \Delta\mu) (1-t^2),
    \end{equation}
    whose derivative in $v_i^+$ is $\Delta\kappa t^2 + \Delta\mu (1-t^2) \ge 0$.

    For the first branch in \eqref{eq:qt}, let $\eta_1 = d_2 \sqrt{1-t^2} - v_i^+ t \Delta\kappa$, $\eta_2 = d_3 t - v_i^+ \sqrt{1-t^2} \Delta\mu$. A direct computation yields
    \begin{equation}
    \begin{aligned}
        \frac{\partial f^{\mathrm{HS}}_1 (v_i^+)}{\partial v_i^+}&= \frac{\Delta\mu \eta_1^2 + \Delta \kappa \eta_2^2 - \frac{(\Delta\kappa\eta_2 -\Delta\mu\eta_1)^2}{\kappa^+ + \mu^+}}{d_1^2}.
    \end{aligned}
    \end{equation}
    Using the elementary identity
    \begin{equation}
        (\Delta\kappa + \Delta\mu)(\Delta\kappa\eta_2^2 + \Delta\mu\eta_1^2) - (\Delta\kappa\eta_2 - \Delta\mu\eta_1)^2 = \Delta\mu \Delta\kappa (\eta_1 + \eta_2)^2 \ge 0.
    \end{equation}    
    and since $\kappa^+ + \mu^+ > \kappa^- + \mu^- > 0$, we may write
    \begin{equation}
    \begin{aligned}
        \frac{\partial f^{\mathrm{HS}}_1 (v_i^+)}{\partial v_i^+}&= \frac{(\Delta\kappa\eta_2^2 + \Delta\mu\eta_1^2)(\kappa^- + \mu^-) + \Delta\mu \Delta\kappa(\eta_1 + \eta_2)^2}{d_1^2(\kappa^+ + \mu^+)} \ge 0.
    \end{aligned}
    \end{equation}
    For the second and third branch in \eqref{eq:qt}, differentiation gives
    \begin{subequations}\label{eq:hs_derivative23}
    \begin{align}
        \frac{\partial f^{\mathrm{HS}}_2 (v_i^+)}{\partial v_i^+} &= \frac{t^2 (\kappa^- + \mu^+) (\kappa^++\mu^+) \Delta\kappa}{d_2^2} > 0,\\
        \frac{\partial f^{\mathrm{HS}}_3 (v_i^+)}{\partial v_i^+} &= \frac{(1-t^2) (\kappa^+ + \mu^-) (\kappa^++\mu^+) \Delta\mu}{d_3^2} > 0,
    \end{align}
    \end{subequations}
    with the strict inequality due to the branch conditions. Since the three expressions are nonnegative on the interiors and the branch values of $q$ agree on $\Gamma_{12} \cup \Gamma_{13}$ (Proposition \ref{prop:hs_continuous}), continuity implies that $f^{\mathrm{HS}}(v_i^+)$ is globally nondecreasing in $v_i^+ \in [0,1]$. 
\end{proof}

Combining endpoint values with continuity and monotonicity gives the following existence and uniqueness consequence.

\begin{corollary}[Existence and uniqueness of the activating HS volume fraction]\label{cor:uniqueness}
    For any fixed strain $\strain \in \mathbb{S}^{2\times2}$ with the Frobenius norm $\sqrt{2}/2$ and elasticity tensor $\Ei \in \mathbb{S}^{2\times2\times2\times2}$ with $\Emin \preceq \Ei \preceq \Emax$, there exists a unique volume fraction $v_i^+ \in [0,1]$ such that $\langle \Ei \strain, \strain\rangle = f^{\mathrm{HS}}(\strain;v_i^+)$.
\end{corollary}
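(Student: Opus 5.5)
The plan is an intermediate-value argument for the scalar function $g(v):=f^{\mathrm{HS}}(\strain;v)-\langle \Ei\strain,\strain\rangle$ on $[0,1]$, combined with a strict-monotonicity refinement to obtain uniqueness.

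\textbf{Existence.} By Proposition~\ref{prop:hs_continuous}, $g$ is continuous on $[0,1]$. At the endpoints we use Proposition~\ref{prop:q_ge_0}, which gives $q(t;0)=q(t;1)=0$, so that $f^{\mathrm{HS}}(\strain;0)=\langle\Emin\strain,\strain\rangle$ and $f^{\mathrm{HS}}(\strain;1)=\langle\Emax\strain,\strain\rangle$. The sandwich $\Emin\preceq\Ei\preceq\Emax$ then gives $g(0)\le 0\le g(1)$, and the intermediate value theorem produces a root $v_i^+\in[0,1]$.

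\textbf{Uniqueness.} Proposition~\ref{prop:hs_monotone} only gives that $g$ is nondecreasing, so the main task is to upgrade this to strict increase on $[0,1]$. I would do this branchwise, using the derivative formulas from that proof.

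On branches 2 and 3, the derivatives in \eqref{eq:hs_derivative23} are strictly positive. The reason is that the branch conditions force $t>0$ on branch 2 and $t<1$ on branch 3, and all remaining factors are positive.

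On branch 1, the derivative is
\[
\frac{(\Delta\kappa\eta_2^2+\Delta\mu\eta_1^2)(\kappa^-+\mu^-)+\Delta\mu\Delta\kappa(\eta_1+\eta_2)^2}{d_1^2(\kappa^++\mu^+)} .
\]
Since $\kappa^-+\mu^->0$, this can vanish only if $\eta_1=\eta_2=0$. That in turn requires $d_2\sqrt{1-t^2}=v t\Delta\kappa$ and $d_3t=v\sqrt{1-t^2}\Delta\mu$. Multiplying these gives $d_2d_3=v^2\Delta\kappa\Delta\mu$ when $t\in(0,1)$, which is impossible: as in the proof that $\Gamma_{23}=\emptyset$, we have $d_2d_3>(v\Delta\kappa)(v\Delta\mu)$ because $d_1>0$. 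If instead $t\in\{0,1\}$, one of $\eta_1,\eta_2$ reduces to $d_2$ or $d_3$, and these are strictly positive.

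Hence the derivative is strictly positive on each branch interior. Because $g$ is continuous and the branch boundaries for fixed $t$ consist of at most finitely many points in $v$, $g$ is strictly increasing on $[0,1]$, so it has at most one zero.

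\textbf{Main obstacle.} The delicate step is the branch-1 strict positivity, in particular keeping track of the degenerate cases $t\in\{0,1\}$ and $v\in\{0,1\}$. For these I would check the endpoints $v=0,1$ separately: strict increase of a continuous function on $(0,1)$ extends to $[0,1]$, so they cause no difficulty.

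An alternative to the derivative argument, which avoids it, is a direct comparison on branch 1. There one writes $f^{\mathrm{HS}}$ as the Voigt term minus $q_1$ and shows $f(v_2)-f(v_1)>0$ for $v_2>v_1$. I would try the derivative route first, since the needed formulas are already available from Proposition~\ref{prop:hs_monotone}.
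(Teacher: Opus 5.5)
Your proposal is correct and follows essentially the same route as the paper. Existence comes from the intermediate value theorem, using the endpoint identities $f^{\mathrm{HS}}(\strain;0)=\langle\Emin\strain,\strain\rangle$, $f^{\mathrm{HS}}(\strain;1)=\langle\Emax\strain,\strain\rangle$ and continuity; uniqueness comes from upgrading Proposition~\ref{prop:hs_monotone} to strict increase via the branchwise derivatives, finitely many branch transitions in $v$, and continuity. Your branch-1 step is slightly sharper than the paper's, which only says the derivative vanishes where $\eta_1=\eta_2=0$ and places this on the branch boundaries, whereas you show $\eta_1=\eta_2=0$ can never occur (and it is even more immediate that $\eta_1,\eta_2>0$ on the branch-1 interior by the strict branch conditions).
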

\begin{proof}
    Since $\Emin \preceq \Ei \preceq \Emax$, we have $\langle\Emin\strain,\strain\rangle \le \langle\Ei\strain,\strain\rangle \le \langle\Emax\strain,\strain\rangle$. Moreover, by the endpoint evaluation in Proposition~\ref{prop:q_ge_0}, $q(\strain;0) = q(\strain;1) = 0$. Hence, $f^\mathrm{HS}(\strain;0) = \langle\Emin\strain,\strain\rangle$, $f^\mathrm{HS}(\strain;1) = \langle\Emax\strain,\strain\rangle$, and $f^\mathrm{HS}(\strain;\cdot)$ is continuous on $[0,1]$ by Proposition \ref{prop:hs_continuous}. Therefore, the intermediate value theorem gives a (possibly nonunique) $v_i^+ \in [0,1]$ with $\langle\Ei \strain,\strain \rangle = f^\mathrm{HS}(\strain;v_i^+)$.

    By Proposition~\ref{prop:hs_monotone}, $f^\mathrm{HS}(\strain;\cdot)$ is nondecreasing on $[0,1]$. We now show it is in fact strictly increasing, which implies uniqueness. On each branch interior, the derivative $\partial f^{\mathrm{HS}}(\strain;v_i^+)/\partial v_i^+$ is nonnegative, and in the second and third branches the derivatives are strictly positive (cf. Eq. \eqref{eq:hs_derivative23}). In the first branch, the derivative is nonnegative and vanishes only at points where $\eta_1 = \eta_2 = 0$, which occurs exactly on the branch boundaries $\Gamma_{12} \cup \Gamma_{13}$ (see the proof of Proposition~\ref{prop:hs_monotone}). Thus, $f^{\mathrm{HS}}(\strain;\cdot)$ cannot be flat on any nontrivial interval contained in a branch interior. At the boundaries, adjacent-branch formulas coincide, and $\Gamma_{23} = \emptyset$. Hence, any interval $[v_1,v_2] \subset [0,1]$ decomposes into finitely many subintervals lying in branch interiors, separated by isolated boundary points. Integrating the (strictly) positive derivatives over each interior subinterval shows $v_2>v_1 \Rightarrow f^\mathrm{HS}(\strain;v_2)>f^\mathrm{HS}(\strain;v_1)$. Therefore, $f^{\mathrm{HS}}(\strain;\cdot)$ is strictly increasing on $[0,1]$, and the solution to $\langle\Ei \strain,\strain \rangle = f^{\mathrm{HS}}(\strain;v_i^+)$ is unique.
\end{proof}

\section{Regularity of the Hashin--Shtrikman lower complementary-energy bound}\label{appendix:rederivation_burzain}

Following \citet{Burazin2021}, the cell-wise material update contains the one-dimensional volume-fraction subproblem
\begin{equation}
    \text{find } v^{+}\in\argmin_{v\in[0,1]} f_{\mathrm{c}}^{\mathrm{HS}}(\stress;v)+\lambda v.
\end{equation}
In this section, for fixed $\stress$, we establish regularity properties of the map $v\mapsto f_{\mathrm{c}}^{\mathrm{HS}}(\stress;v)$. These properties justify an efficient and globally optimal solution strategy for this one-dimensional minimization problem.

\begin{definition}[Optimal lower bound on complementary energy {\citep[Definition~2.3.15]{Allaire2002}}]\label{def:lower_bound}
    Let $\Emin \preceq \Emax$ be two well-ordered isotropic elasticity tensors and let $v^+\in[0,1]$ be the volume fraction of $\Emax$. Denote by $G_{v^+}$ the set of all effective elasticity tensors $\Estar$ obtainable by homogenization of the two phases $(\Emin,\Emax)$ with volume fraction $v^+$.

    A function $f(\stress;v^+)$ is called a (complementary-energy) lower bound if, for every $\stress\in\mathbb{S}^{d\times d}$ and every $\Estar\in G_{v^+}$,
    \[
    \langle (\Estar)^{-1}\stress,\stress\rangle \;\ge\; f(\stress;v^+).
    \]
    The lower bound is optimal if, for every $\stress\in\mathbb{S}^{d\times d}$, there exists $\Estar(\stress)\in G_{v^+}$ such that equality holds, i.e.,
    \[
    \langle (\Estar(\stress))^{-1}\stress,\stress\rangle \;=\; f(\stress;v^+).
    \]
\end{definition}

The Hashin--Shtrikman lower complementary-energy bound is optimal. Let $\kappa^+$ and $\mu^+$ denote the bulk and shear moduli of $\Emax$, and let $\tens{e}\in\mathbb{R}^{d}$ satisfy $\lVert\tens{e}\rVert=1$. Define $\tensf{F}_{\Emax}^\mathrm{c}(\tens{e})\in\mathbb{S}^{d\times d\times d\times d}$ by, for all $\strain\in\mathbb{S}^{d\times d}$,
\begin{equation}
    \langle \tensf{F}_{\Emax}^\mathrm{c}(\tens{e})\strain,\strain\rangle = \langle \Emax\strain,\strain\rangle-\frac{1}{\mu^+}\lVert(\Emax\strain)\tens{e}\rVert^2+
    \left(\frac{1}{\mu^+} - \frac{1}{\kappa^+ + 2 \mu^+ (d - 1)/d}\right)
    \left\langle (\Emax\strain)\tens{e},\, \tens{e}\right\rangle^2. 
\end{equation}
The tensor $\tensf{F}_{\Emax}^\mathrm{c}(\tens{e})$ admits a geometric interpretation. For a fixed unit direction $\tens{e}$, define the subspaces of $\mathbb{S}^{d\times d}$:
\begin{align}
    V(\tens{e})&:=\mathrm{span}\{\tens{e}\hat{\tens{e}}^\trn+\hat{\tens{e}}{\tens{e}}^\trn;\hat{\tens{e}}\in\mathbb{R}^d\},\\
    W(\tens{e})&:=\{\strain\in\mathbb{S}^{d\times d}:\strain\tens{e}=\tens{0}\}.
\end{align}
Since $\Emax$ is positive definite, it admits a unique square root $\left({\Emax}^{\frac{1}{2}}\right)^2=\Emax$, and $\mathbb{S}^{d\times d}$ has the orthogonal decomposition
$$
    \mathbb{S}^{d\times d}=(\Emax)^{\frac{1}{2}}V(\tens{e})\oplus(\Emax)^{-\frac{1}{2}}W(\tens{e}).
$$
Let $\pi_{\mathcal{W}}$ be the projection onto a subspace $\mathcal{W}\subset\mathbb{S}^{d\times d}$. Then,
$$
    \langle \tensf{F}_{\Emax}^\mathrm{c}(\tens{e})\strain,\strain\rangle = \left\lVert\pi_{(\Emax)^{-\frac{1}{2}}W(\tens{e})}\left((\Emax)^{\frac{1}{2}}\strain\right)\right\rVert_F^2=\left\lVert(\Emax)^{\frac{1}{2}}\strain\right\rVert_F^2-\left\lVert\pi_{(\Emax)^{\frac{1}{2}}V(\tens{e})}\left((\Emax)^{\frac{1}{2}}\strain\right)\right\rVert_F^2.
$$
\begin{theorem}[HS lower bound on complementary energy; optimality
{\citep[Proposition~2.3.25]{Allaire2002}}]
Assume two well-ordered isotropic phases $\Emin\preceq\Emax$ and let $v^+\in[0,1]$. Let $G_{v^+}$ be the corresponding $G$-closure. Define, for $\stress\in\mathbb{S}^{d\times d}$,
\begin{equation}\label{eq:hs_lower_bound}
f_{\mathrm{c}}^{\mathrm{HS}}(\stress;v^+)
:= \langle (\Emax)^{-1}\stress,\stress\rangle
 + (1-v^+)\max_{\strain\in\mathbb{S}^{d\times d}}
\Big\{ 2\langle\stress,\strain\rangle
 - \big\langle\big((\Emin)^{-1}-(\Emax)^{-1}\big)^{-1}\strain,\strain\big\rangle
 - v^+\, g^{\mathrm{c}}(\strain)\Big\},
\end{equation}
where $g^{\mathrm{c}}(\strain):=\max_{\lVert\tens{e}\rVert=1}\langle \tensf{F}^{\mathrm{c}}_{\Emax}(\tens{e})\strain,\strain\rangle$. Then, $f_{\mathrm{c}}^{\mathrm{HS}}$ is a (complementary-energy) lower bound in the sense of Definition~\ref{def:lower_bound}: for all $\stress\in\mathbb{S}^{d\times d}$ and all $\Estar\in G_{v^+}$, $\langle (\Estar)^{-1}\stress,\stress\rangle \ge f_{\mathrm{c}}^{\mathrm{HS}}(\stress;v^+)$. Moreover, the bound is optimal: for every $\stress$ there exists $\Estar(\stress)\in G_{v^+}$ generated by a finite-rank sequential laminate such that equality holds. In $d=2$, orthotropic rank-two laminates aligned with the principal stress directions are sufficient.
\end{theorem}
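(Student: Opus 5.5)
The statement is the HS complementary-energy lower bound together with its optimality (Proposition~2.3.25 of Allaire). I would prove it in two parts: the bound via a Hashin--Shtrikman variational argument with $\Emax$ as reference medium, and optimality via explicit sequential laminates.

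\emph{Lower bound.} Fix $\Estar\in G_{v^+}$. By the local character of the $G$-closure, it suffices to treat periodic two-phase composites with characteristic function $\chi$ of the soft phase, $\int_Y\chi=1-v^+$, and then pass to limits. The complementary energy $\langle(\Estar)^{-1}\stress,\stress\rangle$ is the minimum over periodic divergence-free $\tau$ with mean $\stress$ of $\int_Y\langle \tensf{E}(y)^{-1}\tau,\tau\rangle$. I would write $\tensf{E}^{-1}=(\Emax)^{-1}+\chi\big((\Emin)^{-1}-(\Emax)^{-1}\big)$. Since $(\Emin)^{-1}-(\Emax)^{-1}\succ0$, I would dualize the $\chi$-term by the Fenchel identity
\[
\langle \tensf{R}^{-1}\tau,\tau\rangle=\max_{\strain}\{2\langle\tau,\strain\rangle-\langle\tensf{R}\strain,\strain\rangle\},
\qquad \tensf{R}:=\big((\Emin)^{-1}-(\Emax)^{-1}\big)^{-1}.
\]
Restricting the trial field to $\strain(y)=\chi(y)\strain$ with constant $\strain$ then gives a lower bound. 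The term $\int(\Emax)^{-1}\tau:\tau$ with the coupling $2\chi\langle\tau,\strain\rangle$ is then minimized over divergence-free $\tau$ by Fourier analysis. The resulting nonlocal term is $\sum_k|\hat\chi(k)|^2\langle \tensf{F}^{\mathrm c}_{\Emax}(k/|k|)\strain,\strain\rangle$. Bounding it above by $g^{\mathrm c}(\strain)\sum_{k\ne0}|\hat\chi(k)|^2=g^{\mathrm c}(\strain)\,v^+(1-v^+)$ (Parseval) and then maximizing over $\strain$ yields the stated formula. The projection characterization of $\tensf{F}^{\mathrm c}$ identifies exactly this Fourier multiplier.

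\emph{Optimality.} For fixed $\stress$, I would take a maximizer $\strain^\star$ of the inner problem. Since $\strain\mapsto g^{\mathrm c}(\strain)$ is a maximum of quadratic forms, I would use a minimax/Carathéodory argument to find directions $\tens{e}_j$ and weights $m_j\ge0$, $\sum m_j=1$, such that $\strain^\star$ is a saddle point with $g^{\mathrm c}$ replaced by $\sum_j m_j\langle\tensf{F}^{\mathrm c}(\tens{e}_j)\strain,\strain\rangle$. The rank-$p$ sequential laminate with soft core and lamination parameters $(\tens{e}_j,m_j)$ has the known effective formula
\[
(1-v^+)\big((\Estar)^{-1}-(\Emax)^{-1}\big)^{-1}=\tensf{R}+v^+\sum_j m_j\tensf{F}^{\mathrm c}(\tens{e}_j),
\]
and plugging this in shows equality. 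In $d=2$, $g^{\mathrm c}$ depends on $\strain$ through its eigenvalues, and the maximizing directions are principal directions of $\stress$, so at most two orthogonal directions suffice. This recovers the branches of Theorem~\ref{th:burazin_explicit_fc}.

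The main obstacle is the Fourier step: computing the minimum over divergence-free $\tau$ and identifying the multiplier with $\tensf{F}^{\mathrm c}(\tens{e})$ through the orthogonal decomposition $(\Emax)^{1/2}V\oplus(\Emax)^{-1/2}W$. The minimax exchange that justifies the laminate weights is the second delicate point.
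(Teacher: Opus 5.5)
Your proposal is correct and reconstructs essentially the argument behind the cited result; the paper gives no proof of its own and defers to \citep[Proposition~2.3.25]{Allaire2002}, whose route you follow: the dual Hashin--Shtrikman principle with $\Emax$ as reference medium, the Fourier/Parseval bound of the nonlocal term by $g^{\mathrm c}$, and optimality via the sequential-laminate formula with lamination parameters from a minimax/Carath\'eodory argument. The one step you should make explicit is the $d=2$ claim. The active directions are eigenvectors of the inner maximizer $\strain^\star$, which is unique by strict concavity and hence coaxial with $\stress$ by isotropy of $\tensf{R}$ and $g^{\mathrm c}$. In the degenerate case $\strain^\star\propto\tenss{I}_{d\times d}$, where every direction is active, replace the direction measure by the two-point measure on the eigenvectors of $\sum_j m_j\,\tens{e}_j\otimes\tens{e}_j$; this leaves $\bigl(\tensf{R}+v^+\sum_j m_j\tensf{F}^{\mathrm c}_{\Emax}(\tens{e}_j)\bigr)\strain^\star=\stress$ unchanged.
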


The nonlocal term $g^\mathrm{c}$ admits a closed-form expression.
\begin{lemma}[{\citep[Eq. (4.15)]{Allaire1993od}}]
    Let $\varepsilon_1,\varepsilon_2$ be the two eigenvalues of $\strain\in\mathbb{S}^{2\times 2}$, and let $\mu^+$ and $\kappa^+$ be the shear and bulk moduli of $\tensf{E}^+$. Then,
	    \begin{equation}
	        g^\mathrm{c}(\strain)=\frac{4\kappa^+\mu^+}{\kappa^++\mu^+}\max\{\varepsilon_1^2,\varepsilon_2^2\}.
	        \label{eq:gc_2d}
	    \end{equation}
    Moreover, if $\tens{e}_1,\tens{e}_2$ are eigenvectors of $\strain$ associated with the eigenvalues $\varepsilon_1,\varepsilon_2$, then $\max_{\lVert\tens{e}\rVert = 1}\langle \tensf{F}_{\Emax}^\mathrm{c}(\tens{e})\strain,\strain\rangle$ is attained by choosing $\tens{e} = \tens{e}_i$, where $i \in \{1,2\}$ satisfies $\varepsilon_i=\max\{\varepsilon_1^2,\varepsilon_2^2\}$.
\end{lemma}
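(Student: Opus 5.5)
The plan is a direct computation in the principal basis of $\strain$. Both sides of \eqref{eq:gc_2d} are invariant under simultaneous rotation of $\strain$ and $\tens{e}$, because $\Emax$ is isotropic. So we may assume $\strain=\mathrm{Diag}(\varepsilon_1,\varepsilon_2)$ and write $\tens{e}=(\cos\theta,\sin\theta)^\trn$. For $d=2$ the constant in $\tensf{F}^{\mathrm c}_{\Emax}$ is $\kappa^++2\mu^+(d-1)/d=\kappa^++\mu^+$. Moreover, $\stress:=\Emax\strain=\kappa^+\Tr(\strain)\tenss{I}+2\mu^+(\strain-\tfrac12\Tr(\strain)\tenss{I})$ is diagonal, with entries
\[
\sigma_1=(\kappa^++\mu^+)\varepsilon_1+(\kappa^+-\mu^+)\varepsilon_2,\qquad
\sigma_2=(\kappa^+-\mu^+)\varepsilon_1+(\kappa^++\mu^+)\varepsilon_2 .
\]

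Next, reduce the maximization over $\tens{e}$ to one variable. Set $x:=\cos^2\theta\in[0,1]$. Then $\lVert\stress\tens{e}\rVert^2=\sigma_1^2x+\sigma_2^2(1-x)$ and $\langle\stress\tens{e},\tens{e}\rangle=\sigma_1x+\sigma_2(1-x)$. Hence
\[
\langle\tensf{F}^{\mathrm c}_{\Emax}(\tens{e})\strain,\strain\rangle=\langle\stress,\strain\rangle-\tfrac{1}{\mu^+}\bigl(\sigma_1^2x+\sigma_2^2(1-x)\bigr)+\Bigl(\tfrac{1}{\mu^+}-\tfrac{1}{\kappa^++\mu^+}\Bigr)\bigl(\sigma_2+(\sigma_1-\sigma_2)x\bigr)^2 ,
\]
which is a quadratic polynomial in $x$. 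Its leading coefficient is $\bigl(\tfrac{1}{\mu^+}-\tfrac{1}{\kappa^++\mu^+}\bigr)(\sigma_1-\sigma_2)^2\ge0$, so it is convex in $x$. Therefore the maximum over $x\in[0,1]$ is attained at an endpoint, i.e., at $\tens{e}=\tens{e}_1$ ($x=1$) or $\tens{e}=\tens{e}_2$ ($x=0$). In the degenerate case $\sigma_1=\sigma_2$ the expression is affine in $x$, and the same conclusion holds.

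It remains to evaluate the two endpoints. At $x=1$ the $1/\mu^+$ terms cancel, leaving $\sigma_1\varepsilon_1+\sigma_2\varepsilon_2-\sigma_1^2/(\kappa^++\mu^+)$. Write $a=\kappa^++\mu^+$ and $b=\kappa^+-\mu^+$. Then $\sigma_1\varepsilon_1+\sigma_2\varepsilon_2=a\varepsilon_1^2+2b\varepsilon_1\varepsilon_2+a\varepsilon_2^2$ and $\sigma_1^2/a=a\varepsilon_1^2+2b\varepsilon_1\varepsilon_2+b^2\varepsilon_2^2/a$. The difference is
\[
\frac{a^2-b^2}{a}\,\varepsilon_2^2=\frac{4\kappa^+\mu^+}{\kappa^++\mu^+}\,\varepsilon_2^2 .
\]
By symmetry, $x=0$ gives $\frac{4\kappa^+\mu^+}{\kappa^++\mu^+}\varepsilon_1^2$. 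Taking the larger of the two endpoint values yields \eqref{eq:gc_2d}.

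The only real obstacle is bookkeeping in the attainment statement. The computation shows that choosing $\tens{e}=\tens{e}_i$ produces the square of the \emph{other} eigenvalue. So the maximizing lamination direction is the eigenvector orthogonal to the principal direction carrying the larger $\varepsilon_j^2$. I would state it that way, reading the index condition in the lemma as ``$\varepsilon_j^2=\max\{\varepsilon_1^2,\varepsilon_2^2\}$ for $j\neq i$''. This makes it consistent with the rank-one branch of Theorem~\ref{th:burazin_explicit_fc}, where the lamination direction is orthogonal to a principal direction. The convexity-in-$x$ step is the key structural observation; everything else is algebra.
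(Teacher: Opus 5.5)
Your proof is correct. The reduction to $x=\cos^2\theta$ is valid, and the leading coefficient $\bigl(\tfrac{1}{\mu^+}-\tfrac{1}{\kappa^++\mu^+}\bigr)(\sigma_1-\sigma_2)^2=\tfrac{4\kappa^+\mu^+}{\kappa^++\mu^+}(\varepsilon_1-\varepsilon_2)^2\ge 0$ gives convexity in $x$. The endpoint values $\tfrac{4\kappa^+\mu^+}{\kappa^++\mu^+}\varepsilon_2^2$ at $x=1$ and $\tfrac{4\kappa^+\mu^+}{\kappa^++\mu^+}\varepsilon_1^2$ at $x=0$ check out. The paper gives no proof of its own; it only cites Allaire--Kohn, Eq.~(4.15). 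So your computation is a genuine self-contained verification rather than a reproduction.

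You are also right that the attainment claim, as printed, must be corrected. Besides the missing square in $\varepsilon_i=\max\{\varepsilon_1^2,\varepsilon_2^2\}$, the index is swapped. For $\strain=\mathrm{Diag}(1,0)$ the directional value is $0$ at $\tens{e}=\tens{e}_1$ and $\tfrac{4\kappa^+\mu^+}{\kappa^++\mu^+}$ at $\tens{e}=\tens{e}_2$. Hence the maximizer is the eigenvector belonging to the eigenvalue of smaller modulus, exactly as you state.

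For comparison, the projection identity the paper records just before the lemma gives a shorter route. In 2D, $W(\tens{e})$ is one-dimensional, spanned by $\tenss{w}:=\tens{e}^{\perp}(\tens{e}^{\perp})^\trn$. Projecting $(\Emax)^{1/2}\strain$ onto the line through $(\Emax)^{-1/2}\tenss{w}$ yields
\[
\langle\tensf{F}^{\mathrm c}_{\Emax}(\tens{e})\strain,\strain\rangle
=\frac{\langle\strain\tens{e}^{\perp},\tens{e}^{\perp}\rangle^2}{\langle(\Emax)^{-1}\tenss{w},\tenss{w}\rangle}
=\frac{4\kappa^+\mu^+}{\kappa^++\mu^+}\,\langle\strain\tens{e}^{\perp},\tens{e}^{\perp}\rangle^2 .
\]
Here $\langle(\Emax)^{-1}\tenss{w},\tenss{w}\rangle=\tfrac{1}{4\kappa^+}+\tfrac{1}{4\mu^+}$. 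This uses the correct 2D compliance, whose volumetric coefficient is $1/(d^2\kappa^+)$, not the $1/(d\kappa^+)$ printed in the notation section. A Rayleigh-quotient argument then gives \eqref{eq:gc_2d} and the optimal direction at once: $\tens{e}^{\perp}$ lies along the principal direction of largest $\lvert\varepsilon_j\rvert$.

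In your parametrization this is precisely $h(x)=\tfrac{4\kappa^+\mu^+}{\kappa^++\mu^+}\bigl(\varepsilon_1(1-x)+\varepsilon_2x\bigr)^2$, consistent with your endpoints and leading coefficient. Your route has the advantage of using only the explicit defining formula of $\tensf{F}^{\mathrm c}_{\Emax}$. The projection route depends on the orthogonal-decomposition identity, which the paper asserts without proof. In exchange, the projection route gives a closed form for every direction $\tens{e}$ and makes the orthogonality of the optimal lamination direction transparent.
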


The Fenchel conjugate of $g^\mathrm{c}$ is explicitly known.
\begin{lemma}[{\citep{Allaire1993od}}]
    Let $\sigma_1,\sigma_2$ be the two eigenvalues of $\stress$, let $v^+>0$, and let $\mu^+$ and $\kappa^+$ be the shear and bulk moduli of $\tensf{E}^+$. Then,
    \begin{equation}\label{eq:allaire_kohn_gc}
        \max_{\strain\in\mathbb{S}^{2\times 2}}\left\{2\langle\stress,\strain\rangle-v^+ g^\mathrm{c}(\strain)\right\}=\frac{\kappa^++\mu^+}{4\kappa^+\mu^+v^+}(\lvert\sigma_1\rvert+\lvert\sigma_2\rvert)^2.
    \end{equation}
\end{lemma}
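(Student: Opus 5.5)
The plan is to reduce the matrix maximization to a scalar one in the spectral norm of $\strain$, using the preceding lemma \eqref{eq:gc_2d}. Write $c:=4\kappa^+\mu^+/(\kappa^++\mu^+)$. By \eqref{eq:gc_2d}, $g^{\mathrm c}(\strain)=c\,\lVert\strain\rVert_{\mathrm{op}}^2$, where $\lVert\strain\rVert_{\mathrm{op}}=\max\{\lvert\varepsilon_1\rvert,\lvert\varepsilon_2\rvert\}$ is the spectral norm on $\mathbb{S}^{2\times2}$. The objective $2\langle\stress,\strain\rangle-v^+g^{\mathrm c}(\strain)$ then couples $\strain$ to $\stress$ only through the linear term, and the penalty depends only on $\lVert\strain\rVert_{\mathrm{op}}$.

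\textbf{Step 1 (upper bound).} I will apply the Schatten--H\"older (von Neumann trace) inequality, $\langle\stress,\strain\rangle\le\lVert\stress\rVert_{*}\,\lVert\strain\rVert_{\mathrm{op}}$. Here the nuclear norm is $\lVert\stress\rVert_*=\lvert\sigma_1\rvert+\lvert\sigma_2\rvert=:S$, and this duality between nuclear and operator norms holds on symmetric matrices as well. Setting $m:=\lVert\strain\rVert_{\mathrm{op}}\ge0$, this gives
\[
2\langle\stress,\strain\rangle-v^+c\,\lVert\strain\rVert_{\mathrm{op}}^2\le 2mS-v^+c\,m^2\le \frac{S^2}{v^+c}.
\]
The last inequality maximizes the concave quadratic in $m$, which is where $v^+>0$ is used; the maximizer is $m^\star=S/(v^+c)$.

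\textbf{Step 2 (attainment).} Diagonalize $\stress=\semtrx U\,\mathrm{Diag}(\sigma_1,\sigma_2)\,\semtrx U^\trn$ and set
\[
\strain^\star:=m^\star\,\semtrx U\,\mathrm{Diag}(\operatorname{sgn}\sigma_1,\operatorname{sgn}\sigma_2)\,\semtrx U^\trn,
\]
with any sign chosen when $\sigma_i=0$. Then $\lVert\strain^\star\rVert_{\mathrm{op}}=m^\star$ and $\langle\stress,\strain^\star\rangle=m^\star S$, so both inequalities in Step 1 become equalities. This shows the maximum exists and equals $S^2/(v^+c)=\frac{\kappa^++\mu^+}{4\kappa^+\mu^+v^+}(\lvert\sigma_1\rvert+\lvert\sigma_2\rvert)^2$, which is \eqref{eq:allaire_kohn_gc}.

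The main obstacle is justifying the trace inequality cleanly for arbitrary (non-coaxial) $\strain$. If a self-contained argument is preferred, I would write $\langle\stress,\strain\rangle=\sigma_1\langle\tens e_1,\strain\tens e_1\rangle+\sigma_2\langle\tens e_2,\strain\tens e_2\rangle$, where $\tens e_1,\tens e_2$ are the eigenvectors of $\stress$. Each Rayleigh quotient is bounded in absolute value by $\lVert\strain\rVert_{\mathrm{op}}$, which gives the bound $mS$ directly. Everything else is a one-dimensional quadratic maximization.
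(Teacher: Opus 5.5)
Your proof is correct and complete. Via \eqref{eq:gc_2d}, $g^{\mathrm c}(\strain)=c\,\lVert\strain\rVert_{\mathrm{op}}^2$, so the left-hand side is the conjugate of a squared spectral norm evaluated at $2\stress$. Your Rayleigh-quotient argument in the eigenbasis of $\stress$ gives the duality bound $\langle\stress,\strain\rangle\le(\lvert\sigma_1\rvert+\lvert\sigma_2\rvert)\lVert\strain\rVert_{\mathrm{op}}$ and handles non-coaxial $\strain$ cleanly. Together with your explicit coaxial maximizer, this gives the value exactly, including the degenerate case $\stress=\tenss{0}$.

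The paper does not prove this lemma; it only cites \citet{Allaire1993od}. Your argument therefore supplies a self-contained derivation rather than an alternative to one. It also shows the result is the standard identity $(\tfrac a2\lVert\cdot\rVert^2)^*=\tfrac1{2a}\lVert\cdot\rVert_*^2$ with $a=2v^+c$ and the nuclear norm as dual norm. This is exactly the structure the appendix relies on for the perspective/joint-convexity argument and the $v^+$-derivative formula.

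All the nontrivial content sits in \eqref{eq:gc_2d}, which you take as given. The step behind it is that the maximum over lamination directions $\tens e$ is attained at eigenvectors of $\strain$. This holds because $\langle\tensf F^{\mathrm c}_{\Emax}(\tens e)\strain,\strain\rangle$ is a convex quadratic in $\cos^2$ of the angle between $\tens e$ and an eigenvector, so its maximum lies at an endpoint. Choosing $\tens e=\tens e_1$ gives $c\,\varepsilon_2^2$, and vice versa. Add that step if you want the lemma fully self-contained.
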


Identity \eqref{eq:allaire_kohn_gc} can be interpreted as the convex (Fenchel) conjugate of the function $\strain\mapsto v^+ g^\mathrm{c}(\strain)$, evaluated at $2\stress$. More precisely,
\begin{equation}
    [v^+ g^\mathrm{c}]^*(2\stress)=\frac{\kappa^++\mu^+}{4\kappa^+\mu^+v^+}(\lvert\sigma_1\rvert+\lvert\sigma_2\rvert)^2.
\end{equation}
Moreover, for any convex function $\phi$ and $v^+>0$, one has the scaling identity $(v^+\phi)^*(\tenss{\tau})=v^+\phi^*(\tenss{\tau}/v^+)$. Applying this with $\phi=g^\mathrm{c}$ gives
\begin{equation}
[v^+ g^\mathrm{c}]^*(2\tenss{\tau})=v^+[g^\mathrm{c}]^*\!\left(\frac{2\tenss{\tau}}{v^+}\right).
\end{equation}
Hence, $(\tenss{\tau},v^+)\mapsto [v^+ g^\mathrm{c}]^*(2\tenss{\tau})$ is the perspective of $[g^\mathrm{c}]^*$ and is jointly convex in $(\tenss{\tau},v^+)$.

Define $\tensf{R}=\left((\Emin)^{-1}-(\Emax)^{-1}\right)^{-1}$. By standard convex duality, we obtain:
\begin{equation}
    \max_{\strain\in\mathbb{S}^{2\times 2}}
\left\{ 2\langle\stress,\strain\rangle
 - \left\langle\tensf{R}\strain,\strain\right\rangle
 - v^+\, g^{\mathrm{c}}(\strain)\right\}
    = \inf_{\tenss{\tau}\in\mathbb{S}^{2\times 2}}\langle\tensf{R}^{-1}(\stress-\tenss{\tau}),(\stress-\tenss{\tau})\rangle+[v^+ g^\mathrm{c}]^*(2\tenss{\tau})
\end{equation}

We now analyze the regularity of $v^+\mapsto f_{\mathrm{c}}^{\mathrm{HS}}(\stress;v^+)$ for fixed $\stress\neq \tenss{0}$. Define
\begin{align}
    \psi(\tenss{\tau};v^+)&:=\langle\tensf{R}^{-1}(\stress-\tenss{\tau}),(\stress-\tenss{\tau})\rangle+[v^+ g^\mathrm{c}]^*(2\tenss{\tau})\\
    \Psi(v^+)&:=\inf_{\tenss{\tau}\in\mathbb{S}^{2\times 2}}\psi(\tenss{\tau};v^+)\\
    \mathcal{S}(v^+)&:=\argmin_{\tenss{\tau}\in\mathbb{S}^{2\times 2}}\psi(\tenss{\tau};v^+)
\end{align}
so that
\begin{equation}
    f_{\mathrm{c}}^{\mathrm{HS}}(\stress;v^+)=\langle(\Emax)^{-1}\stress,\stress\rangle+(1-v^+)\Psi(v^+).
\label{eq:burzain_valuefunction}
\end{equation}
By the explicit expression of $[v^+ g^\mathrm{c}]^*(2\tenss{\tau})$ (cf.\ \eqref{eq:allaire_kohn_gc}), for each fixed $\tenss{\tau}$ the map $v^+\mapsto \psi(\tenss{\tau};v^+)$ is differentiable on $(0,1]$, with
\begin{equation}
    D_{v^+} \psi(\tenss{\tau};v^+)=-\frac{\kappa^++\mu^+}{4\kappa^+\mu^+(v^+)^2}(\lvert\tau_1\rvert+\lvert\tau_2\rvert)^2.
\end{equation}
In particular, $(\tenss{\tau},v^+)\mapsto D_{v^+}\psi(\tenss{\tau};v^+)$ is continuous on $\mathbb{S}^{2 \times 2}\times (0,1]$. Hence, regularity of $f_{\mathrm{c}}^{\mathrm{HS}}(\stress;v^+)$ reduces to regularity of the value function $\Psi$.

To apply the Danskin-type results of \citet{bonnans2013perturbation} to
$
\Psi(v^+)=\inf_{\tenss{\tau}}\psi(\tenss{\tau};v^+),
$
we need two ingredients: (i) inf-compactness, ensuring bounded minimizing sequences, and (ii) uniqueness of the minimizer for each fixed $v^+>0$. We prove these next.
 \begin{proposition}
     For all $0<v^+_0\leq 1$, $\psi$ has the inf-compactness property at $v^+_0$.
 \end{proposition}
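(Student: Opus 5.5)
The plan is to exhibit a single coercive lower bound on $\psi(\cdot;v^+)$ that holds uniformly for $v^+$ near $v_0^+$. Inf-compactness in the sense of \citet{bonnans2013perturbation} requires two things. First, there is $\alpha\in\mathbb{R}$ for which the level sets $\{\tenss\tau:\psi(\tenss\tau;v^+)\le\alpha\}$ are nonempty. Second, these level sets are contained in a fixed compact set for all $v^+$ in a neighbourhood of $v_0^+$.

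\textbf{Step 1: coercive lower bound.} Because $\Emin\prec\Emax$, the tensor $(\Emin)^{-1}-(\Emax)^{-1}$ is positive definite, so $\tensf R$ and $\tensf R^{-1}$ are positive definite. Let $r>0$ be the smallest eigenvalue of $\tensf R^{-1}$ acting on $\mathbb{S}^{2\times2}$. By \eqref{eq:allaire_kohn_gc}, the second term of $\psi$ equals $\frac{\kappa^++\mu^+}{4\kappa^+\mu^+v^+}(\lvert\tau_1\rvert+\lvert\tau_2\rvert)^2$, which is nonnegative. Hence
\[
\psi(\tenss\tau;v^+)\ \ge\ r\,\lvert\stress-\tenss\tau\rvert^2\ \ge\ r\bigl(\lvert\tenss\tau\rvert-\lvert\stress\rvert\bigr)^2
\qquad\text{for all }\tenss\tau,\ v^+\in(0,1].
\]
This bound does not depend on $v^+$.

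\textbf{Step 2: fix $\alpha$.} Take $\alpha:=\psi(\tenss 0;v^+)=\langle\tensf R^{-1}\stress,\stress\rangle$. Since the conjugate term vanishes at $\tenss\tau=\tenss 0$, this value is also independent of $v^+$. The level set is therefore nonempty, as it contains $\tenss 0$.

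\textbf{Step 3: uniform bounded level set.} If $\psi(\tenss\tau;v^+)\le\alpha$, Step 1 gives $\lvert\tenss\tau\rvert\le\lvert\stress\rvert+\sqrt{\alpha/r}$. So for every $v^+$ in a neighbourhood of $v_0^+$ contained in $(0,1]$, the level set lies in a fixed closed ball of $\mathbb{S}^{2\times2}$. That ball is compact because the space is finite dimensional. Closedness of the level sets follows from continuity of $\psi(\cdot;v^+)$, which is a quadratic form plus the continuous function $(\lvert\tau_1\rvert+\lvert\tau_2\rvert)^2$ of the eigenvalues, scaled by $1/v^+$.

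The only point needing care is the use of $v_0^+>0$. It keeps the factor $1/v^+$ finite and continuous on a neighbourhood $[v_0^+/2,1]$, so that the joint continuity required by the Danskin framework holds. The coercivity itself comes entirely from the $\tensf R^{-1}$ term and does not depend on $v^+$. I therefore do not expect a real obstacle beyond stating the definition of inf-compactness in the form used by \citet{bonnans2013perturbation}.
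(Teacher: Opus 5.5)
Your proof is correct and follows essentially the same route as the paper: both get the compact set from the coercive term $r\lVert\stress-\tenss{\tau}\rVert_F^2$, where $r>0$ is the smallest eigenvalue of $\tensf{R}^{-1}$. The only difference is the choice of level. You take $\tenss{\tau}_0=\tenss{0}$, where the conjugate term vanishes, so $\alpha$ does not depend on $v^+$ and you can drop that term entirely. The paper instead picks an arbitrary $\tenss{\tau}_0$ and uses monotonicity of $v^+\mapsto[v^+g^{\mathrm{c}}]^*(2\tenss{\tau})$ on $V=(v_0^+/2,\,v_0^++1)$ to keep the level sets nonempty, and it keeps a uniform lower bound of the conjugate term in $C$.
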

 \begin{proof}
     It is enough to construct a compact set $C\subset \mathbb{S}^{2\times 2}$, a constant $\alpha\in \mathbb{R}$, and a neighborhood $V$ of $v^+_0$ such that, for every $v^+\in V$, the sublevel set $\{\tenss{\tau}\in\mathbb{S}^{2\times 2}:\psi(\tenss{\tau};v^+)\leq \alpha\}$ is contained in $C$.

	     Set $V=(\frac{v^+_0}{2},v^+_0+1)$. By the explicit expression of $[v^+g^\mathrm{c}]^*(2\tenss{\tau})$ (cf.\ \eqref{eq:allaire_kohn_gc}), the map $v^+\mapsto [v^+g^\mathrm{c}]^*(2\tenss{\tau})$ is decreasing on $(0,1]$ for each fixed $\tenss{\tau}$, hence
	     $
	         \psi(\tenss{\tau};v^+)\leq \psi\left(\tenss{\tau};\tfrac{v^+_0}{2}\right)$
          for all $v^+\in V \text{ and } \tenss{\tau}\in\mathbb{S}^{2\times 2}.
     $
     Now fix $\tenss{\tau}_0\in\mathbb{S}^{2\times 2}$ and set $\alpha=\psi\left(\tenss{\tau}_0;\frac{v^+_0}{2}\right)$, so that the above sublevel set is nonempty. Let $r>0$ be the smallest eigenvalue of $\tensf{R}^{-1}$ and define
     $
         C:=\left\{\tenss{\tau}\in\mathbb{S}^{2\times 2}:\ r\lVert\stress-\tenss{\tau}\rVert_F^2+\frac{\kappa^++\mu^+}{4\kappa^+\mu^+(v^+_0+1)}\left(\lvert\tau_1\rvert+\lvert\tau_2\rvert\right)^2\leq \alpha\right\}.
     $
     The set $C$ is closed and bounded in finite dimension, hence compact. Finally, if $v^+\in V$ and $\psi(\tenss{\tau};v^+)\leq \alpha$, then using $r\lVert\stress-\tenss{\tau}\rVert_F^2\leq \langle \tensf{R}^{-1}(\stress-\tenss{\tau}),(\stress-\tenss{\tau})\rangle$ and $v^+\leq v^+_0+1$ we obtain
     \begin{multline}
         r\lVert\stress-\tenss{\tau}\rVert_F^2+\frac{\kappa^++\mu^+}{4\kappa^+\mu^+(v^+_0+1)}\left(\lvert\tau_1\rvert+\lvert\tau_2\rvert\right)^2
         \\ \leq \langle \tensf{R}^{-1}(\stress-\tenss{\tau}),(\stress-\tenss{\tau})\rangle+\frac{\kappa^++\mu^+}{4\kappa^+\mu^+v^+}\left(\lvert\tau_1\rvert+\lvert\tau_2\rvert\right)^2
         =\ \psi(\tenss{\tau};v^+)\leq \alpha,
     \end{multline}
     which implies $\tenss{\tau}\in C$.
 \end{proof}
The strong convexity of the quadratic term then gives uniqueness of the minimizer.
 \begin{proposition}
     For all $0<v^+_0\leq 1$, $\mathcal{S}(v^+_0)$ is single valued.
 \end{proposition}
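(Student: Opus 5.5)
The plan is to write $\psi(\cdot;v^+_0)$ as the sum of a strongly convex quadratic and a convex function, and conclude that it has exactly one minimizer. Fix $0<v^+_0\le 1$ and split
\[
\psi(\tenss{\tau};v^+_0)=\langle\tensf{R}^{-1}(\stress-\tenss{\tau}),(\stress-\tenss{\tau})\rangle+[v^+_0 g^\mathrm{c}]^*(2\tenss{\tau}).
\]

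For the first term, I will show $\tensf{R}^{-1}=(\Emin)^{-1}-(\Emax)^{-1}\succ 0$ on $\mathbb{S}^{2\times2}$. Since $0\prec\Emin\prec\Emax$ and inversion is operator antitone on positive definite self-adjoint maps, we get $(\Emax)^{-1}\prec(\Emin)^{-1}$. Concretely, both tensors are isotropic, so the difference acts as $\frac{1}{2}(1/\kappa^--1/\kappa^+)$ on the spherical part and $\frac{1}{2}(1/\mu^--1/\mu^+)$ on the deviatoric part, and both coefficients are positive. Hence $\tenss{\tau}\mapsto\langle\tensf{R}^{-1}(\stress-\tenss{\tau}),\stress-\tenss{\tau}\rangle$ is strongly convex with modulus $2r$, where $r>0$ is the smallest eigenvalue of $\tensf{R}^{-1}$ (as used in the preceding inf-compactness proposition).

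For the second term, I will argue that it is convex and finite. It is a Fenchel conjugate, a supremum of affine functions of $\tenss{\tau}$, composed with the linear map $\tenss{\tau}\mapsto 2\tenss{\tau}$, so it is convex and lower semicontinuous. By \eqref{eq:allaire_kohn_gc} it equals $\frac{\kappa^++\mu^+}{4\kappa^+\mu^+v^+_0}(\lvert\tau_1\rvert+\lvert\tau_2\rvert)^2$. This is finite and continuous because $v^+_0>0$, and it is convex since $\lvert\tau_1\rvert+\lvert\tau_2\rvert$ is the nuclear norm, a convex nonnegative function, and squaring a nonnegative convex function preserves convexity. Thus $\psi(\cdot;v^+_0)$ is continuous and strongly convex.

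Existence of a minimizer then follows either from the inf-compactness proposition just proved, which places the nonempty sublevel set inside the compact set $C$, or directly from coercivity of a strongly convex function. For uniqueness, suppose $\tenss{\tau}_1\neq\tenss{\tau}_2$ are both minimizers with value $\Psi(v^+_0)$. Strong convexity gives
\[
\psi\bigl(\tfrac12(\tenss{\tau}_1+\tenss{\tau}_2);v^+_0\bigr)\le\Psi(v^+_0)-\tfrac{r}{2}\lVert\tenss{\tau}_1-\tenss{\tau}_2\rVert_F^2<\Psi(v^+_0),
\]
which contradicts minimality. Hence $\mathcal S(v^+_0)$ is a singleton.

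The only step needing care is the convexity of the conjugate term. I will cite its definition as a conjugate rather than verifying convexity from the explicit formula, and the strict positivity of $\tensf{R}^{-1}$ uses the strict ordering of both moduli.
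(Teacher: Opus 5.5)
Your proof is correct and takes the same route as the paper, which simply cites strong convexity of $\tenss{\tau}\mapsto\psi(\tenss{\tau};v^+_0)$, i.e.\ the quadratic term with $\tensf{R}^{-1}\succ 0$ plus the convex conjugate term; you correctly supply the details the paper leaves implicit (positivity of $\tensf{R}^{-1}$, convexity and finiteness of the conjugate, and existence). One harmless slip: the midpoint gain from the quadratic is $\tfrac14\langle\tensf{R}^{-1}(\tenss{\tau}_1-\tenss{\tau}_2),\tenss{\tau}_1-\tenss{\tau}_2\rangle\ge\tfrac{r}{4}\lVert\tenss{\tau}_1-\tenss{\tau}_2\rVert_F^2$ rather than $\tfrac{r}{2}\lVert\tenss{\tau}_1-\tenss{\tau}_2\rVert_F^2$, which does not affect the strict inequality.
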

 \begin{proof}
     This follows from the strong convexity of $\tenss{\tau}\mapsto\psi(\tenss{\tau};v^+_0)$.
 \end{proof}
With inf-compactness and uniqueness established, we can invoke Danskin's theorem to differentiate $\Psi$ with respect to $v^+$.
	 \begin{theorem}
	     $\Psi$ is Fréchet differentiable in $(0,1]$.
	 \end{theorem}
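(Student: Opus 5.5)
The plan is to apply the Danskin-type differentiability theorem for optimal value functions from \citet{bonnans2013perturbation} to $\Psi(v^+)=\inf_{\tenss{\tau}}\psi(\tenss{\tau};v^+)$ at an arbitrary point $v^+_0\in(0,1]$. That theorem states the following. Suppose $\psi(\cdot;v^+)$ is lower semicontinuous, $\psi$ and its partial derivative $D_{v^+}\psi$ are jointly continuous, inf-compactness holds at $v^+_0$, and the solution set $\mathcal S(v^+_0)$ is a singleton $\{\bar{\tenss{\tau}}\}$. Then $\Psi$ is (Fréchet) differentiable at $v^+_0$, with
\begin{equation*}
\Psi'(v^+_0)=D_{v^+}\psi(\bar{\tenss{\tau}};v^+_0)=-\frac{\kappa^++\mu^+}{4\kappa^+\mu^+(v^+_0)^2}\bigl(\lvert\bar\tau_1\rvert+\lvert\bar\tau_2\rvert\bigr)^2 .
\end{equation*}

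I would carry this out in the following order. First, I check the continuity hypotheses. By the explicit formula \eqref{eq:allaire_kohn_gc}, $\psi(\tenss{\tau};v^+)=\langle\tensf R^{-1}(\stress-\tenss{\tau}),\stress-\tenss{\tau}\rangle+\frac{\kappa^++\mu^+}{4\kappa^+\mu^+v^+}(\lvert\tau_1\rvert+\lvert\tau_2\rvert)^2$. The map $\tenss{\tau}\mapsto\lvert\tau_1\rvert+\lvert\tau_2\rvert$ is the trace (nuclear) norm and is therefore continuous. Hence $\psi$ is jointly continuous on $\mathbb S^{2\times2}\times(0,1]$, and the continuity of $D_{v^+}\psi$ was already noted above. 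Second, I invoke the two preceding propositions, which give inf-compactness at $v^+_0$ and single-valuedness of $\mathcal S(v^+_0)$. Third, I conclude directional differentiability with derivative $D_{v^+}\psi(\bar{\tenss{\tau}};v^+_0)$ in both directions. In one dimension these one-sided derivatives agree, so $\Psi$ is differentiable, which is the same as Fréchet differentiability. At the endpoint $v^+_0=1$, the statement is understood one-sidedly; alternatively, $\psi$ extends naturally to $v^+>1$, and the neighbourhood $V$ in the inf-compactness proof already covers such values.

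To avoid relying on the exact form of the cited theorem, I would also sketch the elementary argument that yields the same conclusion. For the upper bound, $\Psi(v)\le\psi(\bar{\tenss{\tau}};v)$ gives $\limsup_{v\to v_0}\frac{\Psi(v)-\Psi(v_0)}{v-v_0}\le D_{v^+}\psi(\bar{\tenss{\tau}};v_0)$ when $v>v_0$, and the reverse inequality when $v<v_0$. For the other direction, pick $\tenss{\tau}_v\in\mathcal S(v)$. Inf-compactness keeps $\tenss{\tau}_v$ in the compact set $C$. Continuity of $\psi$ together with uniqueness gives $\tenss{\tau}_v\to\bar{\tenss{\tau}}$: any cluster point minimizes $\psi(\cdot;v_0)$, because $\Psi$ is upper semicontinuous as an infimum of continuous functions. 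Then $\Psi(v)-\Psi(v_0)\ge\psi(\tenss{\tau}_v;v)-\psi(\tenss{\tau}_v;v_0)=D_{v^+}\psi(\tenss{\tau}_v;\xi_v)(v-v_0)$ for some $\xi_v$ between $v_0$ and $v$, by the mean value theorem. Joint continuity of $D_{v^+}\psi$ then closes the argument.

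The main obstacle is the convergence $\tenss{\tau}_v\to\bar{\tenss{\tau}}$, i.e. stability of the minimizers. This is exactly where inf-compactness and uniqueness are used, and I would present it carefully via the subsequence/cluster-point argument. A side remark is that the resulting formula for $\Psi'$ is continuous in $v^+$. This follows from the same continuity of $v\mapsto\tenss{\tau}_v$, and it prepares the $C^1$ claim for $f_{\mathrm c}^{\mathrm{HS}}$ through \eqref{eq:burzain_valuefunction}.
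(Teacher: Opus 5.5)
Your proposal is correct and follows the paper's route. You verify joint continuity of $\psi$ and $D_{v^+}\psi$, use the inf-compactness and single-valuedness propositions, and apply \citet[Theorem~4.13 and Remark~4.14]{bonnans2013perturbation}, which yields the same derivative formula. Your elementary Danskin argument (upper bound via $\bar{\tenss{\tau}}$, lower bound via $\tenss{\tau}_v\to\bar{\tenss{\tau}}$ plus the mean value theorem) and your one-sided or extended treatment of the endpoint $v^+_0=1$ are also sound; they make self-contained what the paper only cites.
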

	 \begin{proof}
     Fix $v^+_0>0$. We verify the assumptions of \citet[Theorem~4.13]{bonnans2013perturbation}:
     \begin{enumerate}
         \item $\forall\tenss{\tau}\in\mathbb{S}^{2\times 2}$, $\psi(\tenss{\tau};\cdot)$ is Gâteaux differentiable near $v^+_0$,
         \item $(\tenss{\tau},v^+)\mapsto\psi(\tenss{\tau};v^+)$ and $(\tenss{\tau},v^+)\mapsto D_{v^+}\psi(\tenss{\tau};v^+)$ are continuous near $v^+_0$,
         \item $\psi$ has the inf-compactness property at $v^+_0$
     \end{enumerate}
     Since $\mathcal{S}(v^+_0)$ is single valued, Fréchet differentiability and the gradient formula follow (cf.\ \citep[Remark~4.14]{bonnans2013perturbation}):
     \begin{equation}
         D_{v^+}\Psi(v^+_0)=-\frac{\kappa^++\mu^+}{4\kappa^+\mu^+(v^+_0)^2}(\lvert\tau_1(v^+_0)\rvert+\lvert\tau_2(v^+_0)\rvert)^2.
     \end{equation} 
 \end{proof}
 
Consequently, for fixed $\stress$, the map $v^+\mapsto f_{\mathrm{c}}^{\mathrm{HS}}(\stress;v^+)$ is differentiable on $(0,1]$. Using \eqref{eq:burzain_valuefunction},
\begin{equation}
    D_{v^+}f_{\mathrm{c}}^{\mathrm{HS}}(\stress;v^+)=-\Psi(v^+)+(1-v^+)D_{v^+}\Psi(v^+).
\end{equation}
We now prove convexity of $f_{\mathrm{c}}^{\mathrm{HS}}$ with respect to $v^+$.
\begin{theorem}
    The lower bound $f_{\mathrm{c}}^{\mathrm{HS}}$ is convex in $v^+$ on $(0,1]$ for fixed $\stress$.
\end{theorem}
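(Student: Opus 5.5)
We use the representation \eqref{eq:burzain_valuefunction} together with the dual formula for $\Psi$. By that formula,
\[
(1-v^+)\Psi(v^+)=\inf_{\tenss{\tau}}\Big\{(1-v^+)\langle\tensf{R}^{-1}(\stress-\tenss{\tau}),\stress-\tenss{\tau}\rangle+(1-v^+)\,v^+[g^\mathrm{c}]^*(2\tenss{\tau}/v^+)\Big\}.
\]
The factor $(1-v^+)$ multiplying the infimum prevents us from reading off joint convexity in $(\tenss{\tau},v^+)$ directly. We therefore rescale the inner variable and look for jointly convex perspective structures.

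\textbf{Step 1 (reparametrization).} We set $\tenss{\tau}=\stress-\tenss{\rho}/(1-v^+)$ for $v^+<1$; the value $v^+=1$ is handled at the end by continuity. We also set $\tenss{\beta}:=\stress-\tenss{\rho}/(1-v^+)$. The first term becomes
\[
\langle\tensf{R}^{-1}\tenss{\rho},\tenss{\rho}\rangle/(1-v^+),
\]
which is the perspective of a convex quadratic in the variables $(\tenss{\rho},1-v^+)$, and hence jointly convex.

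\textbf{Step 2 (second term).} By \eqref{eq:allaire_kohn_gc}, the second term equals $(1-v^+)\,c\,(\lvert\beta_1\rvert+\lvert\beta_2\rvert)^2/v^+$ with $c=(\kappa^++\mu^+)/(4\kappa^+\mu^+)$. Here $N(\cdot):=\lvert\cdot_1\rvert+\lvert\cdot_2\rvert$ is the nuclear norm, which is convex and positively homogeneous. Multiplying through gives
\[
(1-v^+)N(\tenss{\beta})=N\big((1-v^+)\stress-\tenss{\rho}\big),
\]
which is affine in $(\tenss{\rho},v^+)$ inside the norm. Squaring and dividing by $(1-v^+)v^+$ gives $c\,N\big((1-v^+)\stress-\tenss{\rho}\big)^2/\big(v^+(1-v^+)\big)$. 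This is not obviously convex, because the denominator $v^+(1-v^+)$ is concave. The plan is to use the identity
\[
\frac{1}{v(1-v)}=\frac{1}{v}+\frac{1}{1-v}
\]
and to split the affine numerator $L:=(1-v^+)\stress-\tenss{\rho}$ as $L=L_1+L_2$. We then write
\[
\frac{N(L)^2}{v(1-v)}=\min_{L_1+L_2=L}\Big\{\frac{N(L_1)^2}{v}+\frac{N(L_2)^2}{1-v}\Big\}.
\]
This is the infimal-convolution identity $\min_{a+b=x}\{a^2/s+b^2/t\}=x^2/(s+t)$, applied with norms by the triangle inequality, with $s=v$ and $t=1-v$. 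Each summand is a perspective and is therefore jointly convex in $(L_i,v^+)$. Partial minimization of a jointly convex function over affine constraints preserves convexity.

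\textbf{Step 3 (conclusion).} Since $f_{\mathrm{c}}^{\mathrm{HS}}(\stress;v^+)-\langle(\Emax)^{-1}\stress,\stress\rangle$ is then an infimum over $(\tenss{\rho},L_1,L_2)$ of a function jointly convex in these variables and $v^+$, it is convex in $v^+$ on $(0,1)$. Convexity extends to $(0,1]$ by continuity of $f_{\mathrm{c}}^{\mathrm{HS}}$, which follows from the $C^1$ property just established. As a cross-check, the explicit branch formulas of Theorem~\ref{th:burazin_explicit_fc} could be verified directly: branches 2 and 3 have the form $a+b(1-v)/(p+qv)$, which is convex in $v$ because $(1-v)/(p+qv)$ is a linear-fractional function with positive second derivative when $p+q>0$. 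Branch 1 needs a similar check. Since the function is $C^1$, convexity on each branch then implies global convexity.

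\textbf{Main obstacle.} The delicate part is Step 2, specifically the concave denominator $v(1-v)$. If the splitting identity does not combine cleanly with the triangle inequality for $N$, the fallback is the branchwise computation together with $C^1$ gluing. In that route the hardest piece is branch 1, where the second derivative of $-v(1-v)a^2/d_1^{\mathrm c}(v)$ with affine $d_1^{\mathrm c}$ must be shown to be nonnegative.
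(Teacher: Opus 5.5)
Your strategy of absorbing the factor $(1-v^+)$ into the inner infimum via $\tenss{\rho}=(1-v^+)(\stress-\tenss{\tau})$ is sound, and Step~1 is correct. However, the identity that Step~2 rests on is false. With $s=v$ and $t=1-v$, the infimal-convolution formula gives $x^2/(s+t)=x^2$, not $x^2/(v(1-v))$. Indeed,
\[
\min_{L_1+L_2=L}\Big\{\frac{N(L_1)^2}{v}+\frac{N(L_2)^2}{1-v}\Big\}=N(L)^2,
\]
with the minimum attained at $L_1=vL$, $L_2=(1-v)L$; the lower bound follows from the triangle inequality and Cauchy--Schwarz. This is strictly smaller than $N(L)^2/(v(1-v))\ge 4N(L)^2$ for $L\neq 0$. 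An infimal convolution of perspectives adds the denominators, giving $v+(1-v)=1$. The partial-fraction identity $1/(v(1-v))=1/v+1/(1-v)$ that you wrote down corresponds instead to a \emph{sum} of two perspectives with a common numerator. So Step~3, which minimizes over $(\tenss{\rho},L_1,L_2)$, starts from a wrong representation of the second term and computes a different function.

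The repair is immediate and needs no splitting of $L$. With $L=(1-v^+)\stress-\tenss{\rho}$,
\[
\frac{c\,N(L)^2}{v^+(1-v^+)}=\frac{c\,N(L)^2}{v^+}+\frac{c\,N(L)^2}{1-v^+},
\]
and each summand is the jointly convex perspective $(x,s)\mapsto N(x)^2/s$ composed with an affine map of $(\tenss{\rho},v^+)$. Alternatively, your worry about the concave denominator is unfounded: the perspective is nonincreasing in $s>0$, so composing it with the concave map $v\mapsto v(1-v)$ preserves joint convexity. Adding $\langle\tensf{R}^{-1}\tenss{\rho},\tenss{\rho}\rangle/(1-v^+)$ gives a function jointly convex on $\mathbb{S}^{2\times2}\times(0,1)$ whose infimum over $\tenss{\rho}$ is $(1-v^+)\Psi(v^+)$. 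Your continuity extension to $v^+=1$ then completes the proof. (In your fallback, $(1-v)/(p+qv)$ has second derivative $2q(p+q)/(p+qv)^3$, so the relevant condition is $q(p+q)>0$; it does hold in branches 2 and 3.)

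The repaired argument differs from the paper's, which never rescales. The paper shows that $\Psi$ is convex, as a partial minimization of $\psi$ using the same perspective structure of $[v^+g^{\mathrm c}]^*(2\tenss{\tau})$, and that $\Psi$ is nonincreasing. It then uses the elementary fact that $(1-v)\Psi(v)$ is convex for convex nonincreasing $\Psi$; the defect in the convexity inequality is $\theta(1-\theta)(v_2-v_1)\bigl(\Psi(v_1)-\Psi(v_2)\bigr)\ge 0$. The paper's route is shorter and covers $(0,1]$ without a limiting argument. Yours dispenses with the monotonicity step and exhibits $(1-v^+)\Psi(v^+)$ directly as the value function of a jointly convex problem. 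The cost is a $v^+$-dependent change of variables that degenerates at $v^+=1$.
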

\begin{proof}
    By \eqref{eq:burzain_valuefunction}, it suffices to prove that $v^+\mapsto (1-v^+)\Psi(v^+)$ is convex on $(0,1]$. We first note that $\Psi$ is nonincreasing on $(0,1]$: for each fixed $\tenss{\tau}$, the map $v^+\mapsto \psi(\tenss{\tau};v^+)$ is nonincreasing (cf.\ the explicit expression \eqref{eq:allaire_kohn_gc}), and taking the infimum over $\tenss{\tau}$ preserves monotonicity. Next, $\Psi$ is convex as a partial minimization of a jointly convex function: indeed,
    $[v^+g^\mathrm{c}]^*(2\tenss{\tau})=v^+(g^\mathrm{c})^*\!\left(\frac{2\tenss{\tau}}{v^+}\right)$ for $v^+>0$, which is the perspective of $(g^\mathrm{c})^*$ and therefore jointly convex in $(\tenss{\tau},v^+)$; hence $\Psi(v^+)=\inf_{\tenss{\tau}}\psi(\tenss{\tau};v^+)$ is convex.

    Finally, let $v_\theta=\theta v_1+(1-\theta)v_2$ with $v_1,v_2\in(0,1]$ and $\theta\in[0,1]$. By convexity of $\Psi$, $(1-v_\theta)\Psi(v_\theta)\leq (1-v_\theta)\bigl(\theta\Psi(v_1)+(1-\theta)\Psi(v_2)\bigr)$. Since $\Psi$ is nonincreasing, the latter is bounded above by $\theta(1-v_1)\Psi(v_1)+(1-\theta)(1-v_2)\Psi(v_2)$, because the difference equals $\theta(1-\theta)(v_2-v_1)\bigl(\Psi(v_1)-\Psi(v_2)\bigr)\geq 0$.
\end{proof}
Since $v^+\mapsto f_{\mathrm{c}}^{\mathrm{HS}}(\stress;v^+)$ is convex and differentiable on $(0,1]$, the elementwise update
\begin{equation}
v^+\in\argmin_{v\in[0,1]}\bigl(f_{\mathrm{c}}^{\mathrm{HS}}(\stress;v)+\lambda v\bigr)
\end{equation}
can be performed efficiently. Since $v^+\mapsto D_{v^+} f_{\mathrm{c}}^{\mathrm{HS}}(\stress;v^+)+\lambda$ is nondecreasing, we use bisection: if $D_{v^+}f_{\mathrm{c}}^{\mathrm{HS}}(\stress;1)<-\lambda$, set $v^+=1$; otherwise, bracket a root of $D_{v^+} f_{\mathrm{c}}^{\mathrm{HS}}(\stress;v^+)=-\lambda$ by dyadic steps (or set $v^+=0$ at tolerance) and bisect on $[v^{+,1},v^{+,0}]$.

\bibliography{liter.bib}
\bibliographystyle{abbrvnat}

\end{document}